\providecommand*{\twoheadrightarrowfill@}{%
	\arrowfill@\relbar\relbar\twoheadrightarrow
}
\providecommand*{\twoheadleftarrowfill@}{%
	\arrowfill@\twoheadleftarrow\relbar\relbar
}
\providecommand*{\xtwoheadrightarrow}[2][]{%
	\ext@arrow 0579\twoheadrightarrowfill@{#1}{#2}%
}
\providecommand*{\xtwoheadleftarrow}[2][]{%
	\ext@arrow 5097\twoheadleftarrowfill@{#1}{#2}%
}
\newcommand{\C}{{\mathfrak C}}
\newcommand{\R}{{\mathbb{R}}}
\newcommand{\fcolon}{\colon}
\DeclareMathOperator{\tp}{{tp}}
\DeclareMathOperator{\Th}{{Th}}
\DeclareMathOperator{\gal}{{Gal}}
\DeclareMathOperator{\cl}{{cl}}
\DeclareMathOperator{\id}{{id}}
\DeclareMathOperator{\aut}{{Aut}}
\DeclareMathOperator{\autf}{{Autf}}
\DeclareMathOperator{\topo}{{top}}
\DeclareMathOperator{\defi}{{def}}
\DeclareMathOperator{\ext}{{ext}}
\DeclareMathOperator{\Sl}{{SL}}
\DeclareMathOperator{\st}{{st}}
\DeclareMathOperator{\bdd}{{bdd}}
\DeclareMathOperator{\acl}{{acl}}
\DeclareMathOperator{\Fix}{{Fix}}
\DeclareMathOperator{\Mod}{{Mod}}
\DeclarePairedDelimiter\floor{\lfloor}{\rfloor}
\newtheorem{thm}{Theorem}[section]
\newtheorem{conj}[thm]{Conjecture}
\newtheorem{ques}[thm]{Question}
\newtheorem{lem}[thm]{Lemma}
\newtheorem{fct}[thm]{Fact}
\newtheorem{cor}[thm]{Corollary}
\newtheorem{prop}[thm]{Proposition}
\theoremstyle{remark}
\newtheorem{rem}[thm]{Remark}
\theoremstyle{definition}
\newtheorem{dfn}[thm]{Definition}
\newtheorem*{sbclm}{Subclaim}
\newtheorem{clm*}{Claim}
\newtheorem{ex}[thm]{Example}
\newcounter{claimcounter}[thm]
\title{Amenability, definable groups, and automorphism groups}
\author{Krzysztof Krupi\'nski}
\email[K.\ Krupi\'{n}ski]{kkrup@math.uni.wroc.pl}
\address[K.\ Krupi\'nski]{
	Instytut Matematyczny, Uniwersytet Wrocławski\\
	pl. Grunwaldzki 2/4\\
	50-384 Wrocław, Poland
}
\thanks{The first author is supported by the Narodowe Centrum Nauki grants 2015/19/B/ST1/01151 and 2016/22/E/ST1/00450.}
\author{Anand Pillay}
\email[A.\ Pillay]{apillay@nd.edu}
\address[A.\ Pillay]{Department of Mathematics, University of Notre Dame\\
	281 Hurley Hall\\
	Notre Dame, IN 46556, USA}
\thanks{The second author is supported by NSF grants DMS-1360702 and DMS-1665035.}
\keywords{Amenability, model-theoretic connected components, G-compactness}
\subjclass[2010]{03C45, 54H20, 54H11, 43A07}
\date{}
\begin{document}
	
	\begin{abstract}
We prove several theorems relating {\em amenability}  of groups in  various categories (discrete, definable, topological, automorphism group) to  {\em model-theoretic invariants} (quotients by connected components, Lascar Galois group, G-compactness, ...).  Among the main tools, which is possibly of independent interest, is the adaptation and generalization of  Theorem 12 of \cite{MaWa} to other settings including amenable topological groups whose topology is generated by a family of subgroups. 

On the side of definable groups, we prove that if $G$ is definable in a model $M$ and $G$ is definably amenable, then the connected components ${G^{*}}^{00}_{M}$ and ${G^{*}}^{000}_{M}$ coincide, answering positively a question from \cite{KrPi}. We also prove some natural counterparts for topological groups, using our generalizations of \cite{MaWa}. By finding a dictionary relating quotients by connected components and Galois groups of $\omega$-categorical theories, we conclude that if $M$ is countable and $\omega$-categorical, and $\aut(M)$ is amenable as a topological group, then $T:=\Th(M)$ is {\em G-compact}, i.e. the Lascar Galois group $\gal_L(T)$ is compact, Hausdorff (equivalently, the natural epimorphism from $\gal_L(T)$ to $\gal_{KP}(T)$ is an isomorphism).  

We also take the opportunity to further develop the model-theoretic approach to topological dynamics,  obtaining for example some new invariants for topological groups, as well as allowing a uniform approach to the theorems above and the various categories.


	\end{abstract}

	\maketitle

	\section{Introduction}


Topological dynamics from the model-theoretic point of view has been mainly developed in the following three contexts: 
\begin{enumerate}
\item[(1)] for a group $G$ definable in a first order structure acting on some spaces of types (e.g. in \cite{Ne1,Ne2, KrPi, ChSi,Kr}), 
\item[(2)] for the group $\aut(\C)$ of automorphisms of a monster model $\C$ of a given theory  acting on a certain space of types \cite{KrPiRz}, 
\item[(3)] for groups of automorphisms of countable, $\omega$-categorical structures (e.g. in \cite{BIT}). 
\end{enumerate}
The main motivation is the fact that notions and ideas from topological dynamics lead to new interesting phenomena in model theory (a generalization of the theory of stable groups) and sometimes can be used as tools to solve open problems in model theory (e.g. in \cite{KrPiRz}, the topological dynamics of $\aut(\C)$ was used to prove very general theorems on the complexity of bounded, invariant equivalence relations). On the other hand,  one can hope to get some new insight into purely topological dynamical problems by an application of some knowledge from model theory (e.g. in \cite{BIT}, using non-trivial model theory, the authors found an example of an oligomorphic group whose various compactifications have some desirable properties, which had been an open problem).

Our general goal and motivation in this and in forthcoming papers is: in each of the following three contexts 
\begin{enumerate}
\item[(i)] $G$ is a group definable in a first order structure, 
\item[(ii)] $G= \aut(M)$, where $M$ is a countable, $\omega$-categorical structure,
\item[(iii)] $G=\aut(\C)$, where $\C$ is a monster model of a given theory,
\end{enumerate}
describe model-theoretic consequences of various dynamical properties of $G$, or even try to express such properties in  purely model-theoretic terms.
This can lead to new interactions and mutual applications between model theory and topological dynamics. Recall that an analogous approach initiated in \cite{KPT} concerning mutual translations of dynamical properties of the groups of automorphisms of Fraiss\'{e} structures and Ramsey-theoretic properties of the corresponding Fraiss\'{e} systems turned out to be very fruitful.  

Among our motivations for this paper was to find model-theoretic consequences of the assumption of amenability of $G$ in appropriate senses. We focus on contexts (i) and (ii). As to (i), we consider a significantly more general situation when $G$ is a topological group; some issues in this context have been already investigated in \cite{GPP}, \cite{PeSt} and \cite{Gi}; in other papers concerning the dynamics of $G$ in model theory, the topology on $G$ was not considered, or, in other words, $G$ was treated as a discrete group.

Amenable groups play a major role in mathematics, and our interest in this class does not require a justification. Recall that for a definable group $G$ there is also a more general notion of {\em definable amenability} (which is just the existence of a left $G$-invariant, finitely additive probability measure on the algebra of definable subsets of $G$); see Subsection \ref{subsection: variants amenability} for more notions of amenability used in this paper. On the model-theoretic side, we will focus on the notions of {\em G-compactness} and {\em G-triviality}, which we briefly discuss now (more details can be found in Subsection \ref{subsection: preliminaries on G-compactness}) .

Recall that with an arbitrary theory $T$ we can associate Galois groups $\gal_L(T)$ (the Lascar Galois group) and $\gal_{KP}(T)$ (the Kim-Pillay Galois group) which are invariants of $T$ (i.e. they do not depend on the choice of the monster model in which they are computed). There is a natural epimorphism from $\gal_L(T)$ to $\gal_{KP}(T)$ (whose kernel is the closure of the identity in the so-called logic topology) and it is interesting to understand when it is an isomorphism (which means that these two invariants of $T$ coincide), in which case we say that $T$ is {\em G-compact}; this is equivalent to saying that $\gal_L(T)$ is Hausdorff with the logic topology. For example, for the theory $T:=ACF_p$ of algebraically closed fields of characteristic $p$, $\gal_L(T)=\gal_{KP}(T)$ coincides with the absolute Galois group of the prime field, so $T$ is G-compact. 
G-compactness was introduced in  \cite{La}, where the Lascar Galois group of a complete first order theory also makes its first appearance.  In fact, in \cite{La}, a stronger definition was given, namely that after naming any finite set of parameters the natural map from $\gal_{L}$ to $\gal_{KP}$ is an isomorphism. (For some reason in \cite{LaPi} the original definition was weakened.)  It appears that the original motivation for introducing these notions in \cite{La} was connected with Michael Makkai's program or project of trying to recover an $\omega$-categorical theory $T$,  or rather its classifying topos $E(T)$, from the category $\Mod(T)$ of models of $T$, at least in special cases.   In \cite{La}, Lascar proves, modulo results of Makkai, that when $T$ is {\em G-finite} (and $T$ is $\omega$-categorical), then the program succeeds: one can recover $E(T)$ (so $T$) from $\Mod(T)$.  Here, G-finite means G-compact in the strong sense together with $\gal_{L}(T)$ being finite, even after adding parameters for finite sets. An account of the category-theoretic aspect of this result when $T$ is {\em G-trivial} (which means that $\gal_{L}(T)$ is trivial after naming any finitely many parameters) appears in \cite{Ma}.  In any case, via these results of Lascar and Makkai, Theorem \ref{theorem: new theorem} below will deduce from extreme amenability of the topological group $\aut(M)$ that $\Th(M)$ can be recovered from its category of models (when $M$ is countable, $\omega$-categorical).
Lascar strong types were also introduced in \cite{La}: they are classes of the finest bounded, ($\emptyset$-)invariant equivalence relation on a product of sorts of $\C$. In \cite{LaPi}, Kim-Pillay strong types were introduced: they are classes of the finest bounded, $\emptyset$-type-definable equivalence relation on a product of sorts of $\C$. These strong types (together with the well-known Shelah strong types) have played a fundamental role in model theory (particularly in stable, simple and NIP theories). It is well-known that G-compactness is equivalent to saying that Lascar strong types coincide with Kim-Pillay strong types. Since the space of Kim-Pillay strong types is  in general much nicer that the space of Lascar strong types, it is highly desirable to understand when both classes of types coincide. This motivates the interest in G-compactness.

In the context of a definable group $G$ (whose interpretation in the monster model is denoted by $G^*$), the counterparts of Galois groups are quotients of $G^*$ by appropriate connected components. Let $G$ be definable in $M$. By ${G^*}^{00}_M$ we denote the smallest $M$-type-definable subgroup of $G^*$ of bounded index, and by ${G^*}^{000}_M$ -- the smallest invariant over $M$ subgroup of bounded index. These are normal subgroups of $G^*$. The quotients $G^*/{G^*}^{00}_M$ and $G^*/{G^*}^{000}_M$ are invariants of $G$ (they do not depend on the choice of the monster model). In fact, they can be equipped with the logic topology, where a subset is closed if its preimage under the quotient map is type-definable. 
Then the former quotient becomes a compact topological group, whereas the latter one is a quasi-compact topological group (so not necessarily Hausdorff). 
It is Hausdorff if and only if ${G^*}^{000}_M = {G^*}^{00}_M$, or, in other words, if the natural epimorphism from $G^*/{G^*}^{000}_M$ to $G^*/{G^*}^{00}_M$ is an isomorphism, which corresponds to G-compactness.  It is worth emphasizing that with an arbitrary $G$ we associate a classical mathematical object $G^*/{G^*}^{00}_M$, namely a compact (Hausdorff) topological group, which is particularly nice in o-minimal structures due to the truth of Pillay's Conjecture; and this is not the case for $G^*/{G^*}^{000}_M$ when ${G^*}^{000}_M \ne {G^*}^{00}_M$.  Recall also from that after adding an affine copy of $G$ as a new sort, the orbits of ${G^*}^{000}_M$ and ${G^*}^{00}_M$ are exactly Lascar strong types and Kim-Pillay strong types over $M$ on this sort, respectively (see \cite[Lemma 3.7]{GiNe}). All of this shows that it is desirable to understand when ${G^*}^{000}_M = {G^*}^{00}_M$. 

Very roughly speaking, our main results are of the form: an appropriate amenability assumption implies an appropriate version of G-compactness.

Section \ref{section: dynamics of top groups}, having partly a preliminary character, contains many new notions and observations which are essential for the rest of the paper.
In Subsection \ref{subsection: topological category}, we develop the topological dynamics of a topological group $G$ treated as a first order structure where predicates for all open subsets of $G$ are in the language. As was noted in \cite{Gi,GPP}, there is a type-definable over $M$ (even over $\emptyset$) subgroup $H$ of $G^*$ such that the quotient map from $G$ to $G^*/H$ is the Bohr compactification of $G$. We denote this subgroup by ${G^*}^{00}_{\topo}$, and we give some descriptions of it; this is a topological variant of ${G^*}^{00}_M$. We also define a topological version of ${G^*}^{000}_M$, which we denote by ${G^*}^{000}_{\topo}$ and whose description as a universal object in a certain category we provide. 
We also recall from \cite{GPP, PeSt} a model-theoretic description of the universal $G$-ambit as a quotient of $G^*$ by a certain type-definable equivalence relation and we describe model-theoretically the semigroup operation on it. Then we notice that the main results from \cite{KrPi} have their counterparts for topological groups (which will not be used in the subsequent sections, but are interesting in their own right). We also give some basic examples illustrating the differences between the definable and topological categories. In particular, we compute various components for the universal cover of $\Sl_2(\mathbb{R})$, and obtain as a conclusion that the Bohr compactification of this cover is trivial, whereas the generalized Bohr compactification is non-trivial (see Corollary \ref{cor: classical application}).
In Subsection \ref{subsection: def top}, working with an arbitrary language (without predicates for all open subgroups of $G$), we introduce definable topological connected components ${G^*}^{00}_{\defi,\topo}$ and ${G^*}^{000}_{\defi,\topo}$, and we give their descriptions; and similarly for the universal definable topological $G$-ambit.

The main body of work is contained in Sections \ref{section: amenability of G} and \ref{section:amenability of Aut(M)}. Our first goal is Question 5.1 from \cite{KrPi} which we recall as a conjecture.

\begin{conj}\label{con: conjecture from KrPi}
Let $G$ be a group definable in a structure $M$. If $G$ is definably amenable, then ${G^*}^{00}_M={G^*}^{000}_M$.
\end{conj}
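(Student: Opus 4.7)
The plan is to work inside the framework of topological dynamics for definable groups developed in \cite{KrPi} and extended in Section~\ref{section: dynamics of top groups}. First I would translate definable amenability into the existence of a left-$G$-invariant, regular Borel probability measure $\bar\mu$ on the type space $S_G(M)$. Form the Ellis semigroup $E(S_G(M))$ of the natural $G$-action on $S_G(M)$, fix a minimal left ideal $\mathcal{M} \subseteq E(S_G(M))$ and an idempotent $u \in \mathcal{M}$, and consider the Ellis group $\mathfrak{u}\mathcal{M}$ equipped with its $\tau$-topology.

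The next step is to invoke the machinery of \cite{KrPi}: there is a canonical surjective group homomorphism from $\mathfrak{u}\mathcal{M}$ onto $G^*/{G^*}^{00}_M$ whose kernel is the $\tau$-closure $H(\mathfrak{u}\mathcal{M})$ of the identity, realizing $G^*/{G^*}^{00}_M$ as the maximal Hausdorff quotient of $\mathfrak{u}\mathcal{M}$. Moreover this homomorphism factors through $G^*/{G^*}^{000}_M$. Thus Conjecture~\ref{con: conjecture from KrPi} reduces to showing that the canonical surjection $\mathfrak{u}\mathcal{M} \twoheadrightarrow G^*/{G^*}^{000}_M$ annihilates $H(\mathfrak{u}\mathcal{M})$.

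The heart of the proof is to exploit $\bar\mu$ to control $H(\mathfrak{u}\mathcal{M})$. Elements of $E(S_G(M))$ act on the space of Borel probability measures on $S_G(M)$, and since $\bar\mu$ is $G$-invariant and the image of $G$ is dense in $E(S_G(M))$, every element of the Ellis semigroup --- in particular every $\eta \in H(\mathfrak{u}\mathcal{M})$ --- preserves $\bar\mu$. The plan is then to combine this preservation with the $\tau$-closeness of $\eta$ to $u$ to deduce that $\eta$ and $u$ represent the same ${G^*}^{000}_M$-coset. Concretely, one would select a type $p$ in the support of $\bar\mu$ with $up = p$, consider the orbit $H(\mathfrak{u}\mathcal{M}) \cdot p$, and use positivity of $\bar\mu$ on basic $\tau$-neighborhoods together with $G$-invariance of $\bar\mu$ to argue that this orbit is contained in a single Lascar class over $M$.

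The main obstacle is exactly this final step: translating measure-preservation on $S_G(M)$ into containment of $H(\mathfrak{u}\mathcal{M})$ inside the kernel of the map to $G^*/{G^*}^{000}_M$. Without the NIP assumption one cannot appeal to Chernikov--Simon-style averaging identities, so the argument must be purely topological-dynamical. The bridge between the dynamical invariant (the $\tau$-closure of $u$) and the model-theoretic invariant (${G^*}^{000}_M$) is where I expect the technical heart to lie; it will likely require exploiting the explicit description of basic $\tau$-neighborhoods of $u$ in terms of type-definable sets over $M$, so that the measure $\bar\mu$ can be used to select, inside any such neighborhood, a type whose corresponding group element is provably ${G^*}^{000}_M$-equivalent to the one coded by $u$.
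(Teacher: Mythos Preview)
Your reduction contains a fatal error. You assert that the kernel of the canonical epimorphism $u\mathcal{M}\twoheadrightarrow G^*/{G^*}^{00}_M$ equals $H(u\mathcal{M})$, i.e.\ that $u\mathcal{M}/H(u\mathcal{M})\cong G^*/{G^*}^{00}_M$. This is false in general: $u\mathcal{M}/H(u\mathcal{M})$ is the \emph{generalized} Bohr compactification, which can be strictly larger than the definable Bohr compactification $G^*/{G^*}^{00}_M$ (see Theorem~\ref{main theorem 1} and the surrounding discussion). What \cite{KrPi} actually gives is a chain of epimorphisms $u\mathcal{M}\twoheadrightarrow u\mathcal{M}/H(u\mathcal{M})\twoheadrightarrow G^*/{G^*}^{000}_M\twoheadrightarrow G^*/{G^*}^{00}_M$, and the last arrow need not be an isomorphism even after quotienting by $H(u\mathcal{M})$. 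Worse, the statement you set out to prove in your ``heart of the proof'' --- that $H(u\mathcal{M})$ lies in the kernel of the map to $G^*/{G^*}^{000}_M$ --- is already Theorem~0.1(2) of \cite{KrPi} (restated here as Theorem~\ref{main theorem 1}(2)), proved without any amenability hypothesis. So your plan reduces the conjecture to a combination of one known fact and one false fact; the amenability assumption never actually does any work.

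The Ellis-group route you sketch is exactly what \cite{KrPi} used, and it only yielded the conclusion under the stronger hypothesis of definable \emph{strong} amenability, together with the side assumption that all types in $S_G(M)$ are definable (needed to make $S_G(M)$ a definable $G$-flow in the first place). The paper's proof abandons this route entirely. Instead it fixes a weakly random type $q\in S_G(M)$, uses the trivial inclusion $qq^{-1}\subseteq {G^*}^{000}_M$, and then shows ${G^*}^{00}_M\subseteq (qq^{-1})^4$ by a purely combinatorial argument: for every definable $B$ of positive measure, the Massicot--Wagner stabilizer machinery \cite{MaWa} produces a descending chain of symmetric generic definable sets $C_i$ with $C_{i+1}^2\subseteq C_i$ and $C_1\subseteq (BB^{-1})^4$, so $\bigcap_i C_i^*$ is a type-definable bounded-index subgroup inside $(B^*{B^*}^{-1})^4$. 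The key input is additive-combinatorial (Ruzsa covering, Sanders' lemma), not topological-dynamical.
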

%
We should recall here that in \cite{KrPi} this conclusion was obtained for definably strongly amenable groups (in particular, for all nilpotent groups) in the sense of Glasner, assuming additionally that all types in $S_G(M)$ are definable (which is the case e.g. when predicates for all subsets of $G$ are in the language). On the other hand, in \cite{GiKr}, it was proved by completely different (algebraic) methods that when $G$ is a non-abelian free group or a surface group of genus at least 2, equipped with predicates for all subsets, then the two connected components in question are distinct. Question 4.28 from \cite{GiKr} asks for which groups equipped with predicates for all subsets are these two connected components the same. The result from \cite{KrPi} mentioned above implies that it is the case for all nilpotent groups, and Conjecture \ref{con: conjecture from KrPi} (which we will prove) implies that it is the case also for all solvable groups. Recall also that Conjecture \ref{con: conjecture from KrPi} was proved in \cite{HrPi} under the assumption that $\Th(M)$ has NIP, using the machinery of $f$-generic types which is not available for arbitrary structures. Hrushovski informed us that the case of Conjecture \ref{con: conjecture from KrPi} when predicates for all subsets of $G$ are in the language can be easily seen to follow from Theorem 3.5 of \cite{Hr}.

We can also formulate an obvious analogue of the above conjecture in the topological context.

\begin{conj}\label{con: topological version of the conjecture from KrPi}
Let $G$ be a topological group. If $G$ is amenable, then ${G^*}^{00}_{\topo}={G^*}^{000}_{\topo}$.
\end{conj}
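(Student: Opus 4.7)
The plan is to parallel the strategy for the definable analogue (Conjecture \ref{con: conjecture from KrPi}), using amenability of $G$ to produce a left-invariant measure and the machinery of the universal topological $G$-ambit from Subsection \ref{subsection: topological category} to transfer this measure to the compact group $K := G^*/{G^*}^{000}_{\topo}$ equipped with the logic topology.

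First, I would use amenability of $G$ as a topological group to fix a left-invariant mean $m$ on $C_b(G)$. By the model-theoretic description of the universal topological $G$-ambit $X$ as a quotient $G^*/E$ of $G^*$ by a $\emptyset$-type-definable equivalence relation, together with the semigroup operation on $X$ recalled in Subsection \ref{subsection: topological category}, $m$ lifts canonically to a left $G$-invariant regular Borel probability measure $\nu$ on $X$. The continuous $G^*$-equivariant surjection from $X$ onto $K$ then yields a left $G^*$-invariant regular Borel probability measure $\mu$ on $K$.

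Second, let $H \trianglelefteq K$ be the closure of $\{e\}$ in the logic topology; general facts identify $H$ with ${G^*}^{00}_{\topo}/{G^*}^{000}_{\topo}$ and $K/H$ with $G^*/{G^*}^{00}_{\topo}$, so Conjecture \ref{con: topological version of the conjecture from KrPi} reduces to showing $H = \{e\}$. The next step is to exploit $\mu$ together with the fact that $H$ is contained in every logic-open neighborhood of $e$ to realize each $h \in H$ already inside ${G^*}^{000}_{\topo}$: for a logic-open $U$ around $e$, left-invariance of $\mu$ forces $hU \cap U \ne \emptyset$, which expresses $h$ as a ratio of two elements of $X$ lying in a common logic-open neighborhood. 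Varying $U$ and taking a limit in $X$, such ratios should generate only elements of ${G^*}^{000}_{\topo}$ (by its description as a universal object in Subsection \ref{subsection: topological category}), forcing $h \in {G^*}^{000}_{\topo}$ and hence $H = \{e\}$.

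The main obstacle will be carrying out the last step cleanly in the absence of Hausdorffness of $K$: standard Haar-measure techniques apply only to the Hausdorff quotient $K/H$ and provide no direct purchase on $H$ itself. I expect the technical remedy is to perform the measure-theoretic argument inside the definable topological ambit from Subsection \ref{subsection: def top}, where the interplay between the logic topology and the ambit's semigroup operation supplies the rigidity needed to identify $H$ with a subgroup that the invariant measure annihilates, mirroring the strategy used for the definable case in Section \ref{section: amenability of G} with predicates for open subsets of $G$ replacing arbitrary definable predicates throughout.
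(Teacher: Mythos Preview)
Your proposal has a fundamental gap, and it is precisely the one you flag at the end but do not resolve. The logic topology on $K=G^*/{G^*}^{000}_{\topo}$ is quasi-compact with $H={G^*}^{00}_{\topo}/{G^*}^{000}_{\topo}$ equal to the closure of the identity, so every open subset of $K$ is $H$-invariant (a union of $H$-cosets). Consequently the Borel $\sigma$-algebra on $K$ is literally the pullback of the Borel $\sigma$-algebra on the Hausdorff quotient $K/H\cong G^*/{G^*}^{00}_{\topo}$, and any Borel measure on $K$ is determined by its pushforward to $K/H$. Your invariant measure $\mu$ therefore cannot distinguish any two points of $H$; the observation ``$hU\cap U\ne\emptyset$'' is vacuous because $h$ already lies in every open $U\ni e$, and no limiting argument in $K$ or in the ambit can force $h\in{G^*}^{000}_{\topo}$, since that subgroup is not closed (if it were, there would be nothing to prove). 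In short, measure-theoretic arguments on $K$ or on its Hausdorff quotient see only ${G^*}^{00}_{\topo}$ and are blind to exactly the gap you are trying to close.

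The paper does not prove the conjecture in full; it proves it only under the additional hypothesis that $G$ has a basis of neighborhoods of $e$ consisting of open subgroups (Theorem~\ref{thm:  Topological conjecture}, a special case of Theorem~\ref{thm: main theorem in Section 2}). The argument is entirely different from yours: one works not on $K$ but on $G$ itself. The invariant measure $\nu$ on the universal ambit $S_G^\mu(M)$ is pulled back, via zero-dimensionality (Proposition~\ref{prop: zero-dimensionality}, which needs the open-subgroup hypothesis), to a $G$-invariant finitely additive measure ${\mathfrak m}$ on a suitable Boolean algebra of definable subsets of $G$. One then takes a weakly $\nu$-random $q\in S_G^\mu(M)$, notes $qq^{-1}\subseteq{G^*}^{000}_{\topo}$, and proves ${G^*}^{00}_{\topo}\subseteq(qq^{-1})^4$ by the Massicot--Wagner approximate-subgroup machinery: for each definable $B$ of positive measure one constructs a descending chain of symmetric generic definable sets $C_i$ with $C_{i+1}^2\subseteq C_i\subseteq(BB^{-1})^4$, so that $\bigcap_i C_i^*$ is a bounded-index type-definable subgroup. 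This is a combinatorial construction at the level of definable subsets of $G$, not a measure-on-$K$ argument, and it is what actually bridges the gap between the two connected components.
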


To deal with this conjecture, we expand the group $G$ by predicates for all open subsets. This context extends the particular case of Conjecture  \ref{con: conjecture from KrPi} when predicates for all subsets of $G$ are in the language (namely, in such a situation, treating $G$ as a discrete group, we observe in Section \ref{section: dynamics of top groups} that ${G^*}^{00}_M={G^*}^{00}_{\topo}$ and ${G^*}^{000}_M={G^*}^{000}_{\topo}$).

As we will see in Subsection \ref{subsection: variants amenability},
a common generalization of Conjectures \ref{con: conjecture from KrPi} and \ref{con: topological version of the conjecture from KrPi} is the following (where weak definable topological amenability is defined in Definition \ref{definition: weak definable topological amenability} below).

\begin{conj}\label{con: the most general}
Let $G$ be a topological group definable in an arbitrary structure $M$. If $G$ is weakly definably topologically amenable, then ${G^*}^{00}_{\defi,\topo}={G^*}^{000}_{\defi,\topo}$.
\end{conj}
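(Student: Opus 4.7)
The plan is to follow the general shape of the strategy that worked for the definable case (treated in \cite{KrPi}) and the topological case, now adapted to the combined definable–topological setting developed in Subsection \ref{subsection: def top}. Since ${G^*}^{000}_{\defi,\topo} \subseteq {G^*}^{00}_{\defi,\topo}$ is automatic, equality is equivalent to the compact topological group $Q := G^*/{G^*}^{000}_{\defi,\topo}$ (with the logic topology) being Hausdorff; indeed ${G^*}^{00}_{\defi,\topo}/{G^*}^{000}_{\defi,\topo}$ coincides with the closure of the identity in $Q$. So I would aim to prove that this closure is trivial.

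\textbf{From amenability to an invariant measure on the ambit.} The hypothesis of weak definable topological amenability should be formulated precisely so as to produce a left $G$-invariant, finitely additive probability measure on the relevant Boolean algebra (essentially the algebra of definable open subsets of $G$, or the equivalent functional datum). Using the model-theoretic description of the universal definable topological $G$-ambit $(X,x_0)$ from Subsection \ref{subsection: def top} as a quotient of a type space by a canonical type-definable equivalence relation, I would transfer this to a $G$-invariant regular Borel probability measure $\mu$ on $X$.

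\textbf{Ellis group analysis and the main obstacle.} Let $E(X)$ be the Ellis semigroup of the $G$-flow $X$, let $\mathcal{M}$ be a minimal left ideal of $E(X)$ and $u \in \mathcal{M}$ an idempotent. The Ellis group $(u\mathcal{M},\tau)$ sits in canonical surjections
\[
u\mathcal{M} \twoheadrightarrow G^*/{G^*}^{000}_{\defi,\topo} \twoheadrightarrow G^*/{G^*}^{00}_{\defi,\topo},
\]
and by the general theory of Section \ref{section: dynamics of top groups} the kernel of the composite is precisely the $\tau$-closure $H$ of the identity in $u\mathcal{M}$. Equality of the two components is therefore equivalent to showing that the first arrow already kills $H$. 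The main step — and, I expect, the main obstacle — is to exploit $\mu$ to collapse $H$ into this first kernel: for $\eta \in H$, a $\tau$-approximation of $\eta$ by elements of $G$ together with the $G$-invariance of $\mu$ yields a measure-theoretic fixing property of $\eta$, and the definable-topological refinement of Subsection \ref{subsection: def top} should allow one to upgrade this $\mu$-almost-everywhere identification to equality in $Q$. The delicate point is that, unlike in the NIP case of \cite{HrPi}, no $f$-generic types are available, so one must argue directly with $\mu$, the Ellis semigroup, and the definable open sets; specializing to predicates for all subsets should recover the argument for Conjecture \ref{con: conjecture from KrPi}, and specializing to the pure topological group setting should recover the argument for Conjecture \ref{con: topological version of the conjecture from KrPi}.
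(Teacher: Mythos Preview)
This statement is an \emph{open conjecture} in the paper, not a theorem; the paper does not prove it. What the paper does prove is the strictly weaker Theorem~\ref{thm: main theorem in Section 2} (Conjecture~\ref{con: general enough} under the extra hypothesis of a basis of open neighborhoods of $e$ consisting of definable open subgroups), and Subsection~3.4 explicitly lists the obstructions to the general case: the ambit $S_G(M)/E_1$ need not be zero-dimensional (so one cannot build the measure ${\mathfrak m}$ on a rich enough algebra of definable sets), and for $p\in S_G(M)/E_1$ it is not known whether $pp^{-1}\subseteq {G^*}^{000}_{\defi,\topo}$ (this is tied to Problem~3.11 of \cite{Kr}).

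Your proposed route has a genuine gap. The Ellis-group machinery you invoke is exactly what the paper sets up in Theorems~\ref{main theorem 1}--\ref{main theorem 2} and Corollaries~\ref{easy corollary}--\ref{main corollary 3}, but that argument only yields ${G^*}^{000}={G^*}^{00}$ from \emph{strong} amenability, not from amenability: the step ``$\mu$-invariance forces $H$ into the kernel of the first arrow'' is precisely where one needs the absence of nontrivial proximal flows, and an invariant measure alone does not supply this. Your description of the kernel is also not quite right: $H(u\mathcal{M})$ is the intersection of $\tau$-closures of $\tau$-neighborhoods of the identity, and Theorem~\ref{main theorem 1} gives only $H(u\mathcal{M})\leq\ker(\bar f)$, not equality. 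The paper's proofs of the special cases (Theorems~\ref{thm: conjecture from KrPi}, \ref{thm:  Topological conjecture}, \ref{thm: main theorem in Section 2}) abandon the Ellis approach entirely and instead adapt the Massicot--Wagner stabilizer/approximate-subgroup argument: from a weakly random type $q$ one shows ${G^*}^{00}\subseteq (qq^{-1})^4$ by iteratively producing generic symmetric definable sets $C_i$ with $C_{i+1}^2\subseteq C_i$. Even that method currently needs the zero-dimensionality and the $pp^{-1}$ inclusion, which is why the full conjecture remains open.
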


Even the following restriction of the above conjecture generalizes the previous conjectures. 

\begin{conj}\label{con: general enough}
Let $G$ be a topological group definable in a structure $M$ in which the members of a basis of open neighborhoods of $e$ are definable. If $G$ is definably topologically amenable, then ${G^*}^{00}_{\defi,\topo}={G^*}^{000}_{\defi,\topo}$.
\end{conj}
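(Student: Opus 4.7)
The plan is to reduce to an Ellis-semigroup analysis of the universal definable topological $G$-ambit from Subsection \ref{subsection: def top} and to use the $G$-invariant finitely additive probability measure supplied by definable topological amenability to force the Ellis group of that flow into ${G^*}^{000}_{\defi,\topo}$. The template is the one carried out in \cite{HrPi} in the NIP case via $f$-generic types: one turns an invariant mean into a measure on a type space, then controls the minimal subflows. The extra hypothesis that a basis of open neighborhoods of $e$ consists of definable sets is the bridge that lets one encode enough of the topology of $G$ in the definable world for such an argument to run.

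Concretely, I would first work with the ambit $X := S^{\defi,\topo}_G(M)$ and identify ${G^*}^{00}_{\defi,\topo}$ and ${G^*}^{000}_{\defi,\topo}$ as the stabilizers of, respectively, the type-definable and invariant equivalence classes of a distinguished point in $X$, by analogy with the identifications announced in Subsection \ref{subsection: topological category}. I would then lift the invariant mean on the relevant algebra of definable sets to a regular Borel $G$-invariant probability measure $\mu$ on $X$. A standard argument shows that $\mu$ concentrates on the minimal subflows of $X$, so if $I$ is a minimal left ideal of the Ellis semigroup $E(X)$ and $u \in I$ is an idempotent, the Ellis group $uI$ is controlled by $\mu$. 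The final step is an averaging/conjugation argument: given arbitrary $\eta \in uI$, integrate $\eta$ against $\mu$ and read off that $\eta$ acts trivially on the quotient $X/{G^*}^{000}_{\defi,\topo}$, which yields ${G^*}^{00}_{\defi,\topo} = {G^*}^{000}_{\defi,\topo}$.

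The main obstacle, I expect, is this last averaging step in the definable topological category. In the purely definable case (Conjecture \ref{con: conjecture from KrPi}) the algebra on which $\mu$ lives is the full Boolean algebra of $M$-definable subsets of $G$, and separating distinct ${G^*}^{000}_{M}$-cosets by such sets is immediate. In the definable topological category the algebra is smaller---generated by \emph{definable open} sets---so the crux becomes an intermediate separation lemma: any two distinct ${G^*}^{000}_{\defi,\topo}$-cosets should be separable by sets in the domain of $\mu$. The definable basis of neighborhoods of $e$ is precisely what supplies this: translation carries a definable neighborhood of $e$ to a definable neighborhood of any prescribed point, so one recovers the usual type-separation needed for the averaging computation. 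Once this is in place, the remainder of the argument is a routine adaptation of the discrete/definable proof and immediately specializes to Conjectures \ref{con: conjecture from KrPi} and \ref{con: topological version of the conjecture from KrPi}.
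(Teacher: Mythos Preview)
First, note that the paper does \emph{not} prove Conjecture~\ref{con: general enough} in full; it only proves the special case stated as Theorem~\ref{thm: main theorem in Section 2}, where the definable basis at $e$ consists of open \emph{subgroups}. So you are attempting something stronger than what the paper achieves.

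More importantly, your proposed route is fundamentally different from the paper's and contains a genuine gap. The paper's argument is combinatorial: it fixes a weakly random $q$ in $S_G^\mu(M)$, notes that $qq^{-1}\subseteq {G^*}^{000}_{\defi,\topo}$, and then shows ${G^*}^{00}_{\defi,\topo}\subseteq (qq^{-1})^4$ by producing a chain of symmetric generic definable sets $C_1\supseteq C_2\supseteq\cdots$ with $C_{i+1}^2\subseteq C_i$ and $C_1\subseteq (BB^{-1})^4$ for every positive-measure $B$. The construction of the $C_i$'s is the Massicot--Wagner approximate-subgroups machinery \cite{MaWa} (a Sanders-type stabilizer argument), run inside an algebra $\mathcal{A}$ (or $\mathcal{B}$) of definable sets on which the invariant measure lives. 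No Ellis-semigroup analysis enters at all.

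Your Ellis-group/averaging approach is essentially the one that the paper and \cite{KrPi} use for \emph{strongly} amenable groups (see Corollary~\ref{main corollary 3}), and this is exactly where the gap sits. The sentence ``integrate $\eta$ against $\mu$ and read off that $\eta$ acts trivially on $X/{G^*}^{000}_{\defi,\topo}$'' is not an argument: an invariant probability measure on the universal ambit does not by itself force idempotents in a minimal ideal to act trivially on any quotient---that is precisely the distinction between amenability and strong amenability. In the amenable-but-not-strongly-amenable case the minimal proximal flow is nontrivial, the Ellis group is genuinely larger than what maps to $G^*/{G^*}^{00}$, and no averaging over $\mu$ is known to collapse it. The passage from strong amenability to mere amenability is the whole content of Section~\ref{section: amenability of G}, and it is achieved by abandoning the Ellis route in favor of the Massicot--Wagner technique. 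Your ``intermediate separation lemma'' addresses a secondary issue (having a rich enough algebra for the measure), but does nothing to fill the main gap in the averaging step.
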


In Subsection \ref{subsection: amenability and components}, we prove Conjecture \ref{con: general enough} under the assumption that there is a basis of open neighborhoods of $e$ consisting of open subgroups which are definable. 

\begin{thm}\label{thm: main theorem in Section 2}
Let $G$ be a topological group definable in a structure $M$. Assume that $G$ has a basis of open neighborhoods of $e$ consisting of definable, open subgroups. 
If $G$ is definably topologically amenable, then ${G^*}^{00}_{\defi,\topo}={G^*}^{000}_{\defi,\topo}$.
\end{thm}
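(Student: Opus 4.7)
The plan is to reduce the theorem to an application of the topological-dynamical framework from Section \ref{section: dynamics of top groups}, using the invariant measure supplied by definable topological amenability, and following a strategy analogous to (but more general than) the one in \cite{KrPi}. Writing $\mathcal{U}$ for the distinguished basis of definable open subgroups at $e$, I would first verify that under this hypothesis the two components admit clean descriptions, in particular
\[
{G^*}^{00}_{\defi,\topo} = \bigcap \{H \leq G^* : H \text{ type-definable over } M,\ [G^* : H] \text{ bounded},\ H \subseteq U^*\ \forall U \in \mathcal{U}\},
\]
and the analogous formula for ${G^*}^{000}_{\defi,\topo}$ with ``invariant over $M$'' in place of ``type-definable over $M$''. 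This reduction shows that the issue is really about type-definability versus bare invariance of bounded-index subgroups that are contained in every $U^*$.

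Next, I would use definable topological amenability to produce a left $G$-invariant, finitely additive probability measure $\mu$ on the Boolean algebra of definable topological subsets of $G$. By the standard extension-of-measure trick with Keisler measures, $\mu$ yields a $G^*$-invariant probability measure on the definable topological $G$-ambit $X$ from Subsection \ref{subsection: def top}, and a compactness argument then guarantees that every minimal subflow of $X$ carries a $G^*$-invariant probability measure.

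The core step is to show that the natural epimorphism from the Ellis group of $X$ (computed with respect to some minimal left ideal and idempotent) onto $G^*/{G^*}^{00}_{\defi,\topo}$ is an isomorphism. In spirit, if two elements of the Ellis group agree modulo ${G^*}^{00}_{\defi,\topo}$, then averaging their action against the invariant measure on a minimal subflow should force them to agree pointwise on that subflow, hence to be equal. Combining this isomorphism with the fact that the Ellis group is always a continuous image of $G^*/{G^*}^{000}_{\defi,\topo}$ yields ${G^*}^{00}_{\defi,\topo} = {G^*}^{000}_{\defi,\topo}$.

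The main obstacle is making this averaging argument rigorous in the definable topological setting, where $\mu$ lives on a mixed Boolean algebra generated by definable sets together with the translates of members of $\mathcal{U}$. The hypothesis that $\mathcal{U}$ consists of definable open \emph{subgroups} (rather than arbitrary open neighborhoods) is precisely what makes this tractable: for each $U \in \mathcal{U}$ it provides a definable partition of $G$ into cosets of $U$ on which $\mu$ disintegrates cleanly, so that the Krupi\'nski--Pillay-style averaging can be carried out coset-by-coset and matched against the type-definable structure of $H \cap U^*$. Without the subgroup property — i.e.\ in the full generality of Conjecture \ref{con: general enough} — the compatibility between $\mu$ and the relevant type-definable cosets becomes much more delicate, which is presumably why the paper restricts to the subgroup-basis case in this theorem.
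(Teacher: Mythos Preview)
Your proposal has a genuine gap, and the approach is not the one taken in the paper.

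First, a factual error: you write that ``the Ellis group is always a continuous image of $G^*/{G^*}^{000}_{\defi,\topo}$'', but the direction is the reverse. In the topological-dynamics framework developed here (Theorem~\ref{main theorem 1}) one has epimorphisms
\[
u{\mathcal M}\twoheadrightarrow u{\mathcal M}/H(u{\mathcal M})\twoheadrightarrow G^*/{G^*}^{000}_{\defi,\topo}\twoheadrightarrow G^*/{G^*}^{00}_{\defi,\topo},
\]
so $G^*/{G^*}^{000}_{\defi,\topo}$ is an image of the Ellis group, not conversely. This can be repaired: if you could show that the composite $u{\mathcal M}/H(u{\mathcal M})\to G^*/{G^*}^{00}_{\defi,\topo}$ is an isomorphism, then the intermediate surjection $G^*/{G^*}^{000}_{\defi,\topo}\to G^*/{G^*}^{00}_{\defi,\topo}$ would also be injective, giving the result (compare Corollary~\ref{easy corollary}).

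The real gap is in your ``core step''. Showing that the Ellis group (or $u{\mathcal M}/H(u{\mathcal M})$) maps isomorphically onto $G^*/{G^*}^{00}_{\defi,\topo}$ is exactly what Corollary~\ref{main corollary 3} does under \emph{strong} amenability, where the absence of nontrivial proximal flows is what makes the argument go through. Mere amenability is strictly weaker, and your averaging heuristic (``if two Ellis-group elements agree modulo ${G^*}^{00}_{\defi,\topo}$, then averaging against the invariant measure forces them to agree on a minimal subflow'') does not supply the missing mechanism: an invariant measure on a minimal flow does not in general separate distinct Ellis-group elements, nor does it trivialize proximality. In short, you are implicitly assuming a strong-amenability-type conclusion from an amenability hypothesis, and no averaging argument of the kind you sketch is known to bridge that gap.

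The paper's proof goes by an entirely different route, avoiding the Ellis group altogether. One picks a weakly $\nu$-random $q\in S_G^\mu(M)$, so that $(qq^{-1})^4\subseteq {G^*}^{000}_{\defi,\topo}$ is immediate, and then shows ${G^*}^{00}_{\defi,\topo}\subseteq (qq^{-1})^4$ by producing, for each definable $B$ of positive measure, a nested sequence of symmetric, generic, definable sets $C_1\supseteq C_2\supseteq\cdots$ with $C_1\subseteq (BB^{-1})^4$ and $C_{i+1}^2\subseteq C_i$, so that $\bigcap_i C_i^*$ is an $M$-type-definable bounded-index subgroup. This is the Massicot--Wagner approximate-subgroup/stabilizer technique \cite{MaWa}, and the role of the basis of definable open subgroups is to ensure that the relevant sets can be taken in a suitable Boolean algebra (the algebra $\mathcal B$ of sets $H_1\varphi(G)H_2$ with $H_i$ definable open subgroups) on which the measure and the construction interact correctly. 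Your explanation of why the subgroup hypothesis matters --- coset-by-coset disintegration of $\mu$ --- is not the operative reason; rather, it is needed to guarantee zero-dimensionality of $S_G^\mu(M)$ and closure of the algebra under the operations used in the Massicot--Wagner iteration.
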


This implies Conjecture \ref{con: conjecture from KrPi} in its full generality and Conjecture \ref{con: topological version of the conjecture from KrPi} for groups possessing a basis of open neighborhoods of $e$ consisting of open subgroups (which is strong enough for an application in Section \ref{section:amenability of Aut(M)}, namely to prove Theorem \ref{thm: the main result of the paper}). 
The main tool in our proof is a technique from \cite{MaWa} for understanding ``approximate subgroups''. In the context of a definable, definably amenable group $G$ in an $\omega^+$-saturated structure, and a definable subset $X$ of $G$ with positive measure, they construct a chain of symmetric, generic, definable subsets $Y_{1} \supseteq Y_{2} \supseteq \dots$ of   $(XX^{-1})^{4}$ such that $Y_{i+1}^{2}\subseteq Y_{i}$ for all $i$. With additional properties this is sometimes called a (definable) Bourgain system.  
This is motivated by Theorem 3.5 of \cite{Hr}, and uses ideas of Sanders \cite{Sa}.  
We will adapt the technique to more general contexts, in particular to topological groups, dropping the $\omega^+$-saturation requirement.
We first prove Conjecture \ref{con: conjecture from KrPi} assuming that predicates for all subsets of $G$ are in the language, and then extend this argument to show Conjecture \ref{con: topological version of the conjecture from KrPi} under the assumption that there is a basis of open neighborhoods of $e$ consisting of open subgroups. We consider these special cases of Theorem \ref{thm: main theorem in Section 2}, because they can be obtained by a simplification of the argument from \cite{MaWa} and make the main ideas more transparent. Then we use the full power of the argument from \cite{MaWa} together with some new arguments to prove Theorem \ref{thm: main theorem in Section 2}.

In Subsection \ref{subsection: extreme amenability}, we give a quick proof of the the following counterpart of Conjectures \ref{con: general enough}, \ref{con: topological version of the conjecture from KrPi} and \ref{con: conjecture from KrPi} for extremely amenable groups, which is used to prove Corollary \ref{theorem: new theorem}.

\begin{prop}\label{prop: extremely amenable}
Let $G$ be a topological group definable in a structure $M$ such that there is a basis of open neighborhoods of $e$ consisting of definable sets. Then, if $G$ is definably topologically extremely amenable, then $G^*={G^*}^{000}_{\defi, \topo}= {G^*}^{00}_{\defi, \topo}$. In particular: if $G$ is extremely amenable as a topological group, then $G^*={G^*}^{000}_{\topo}= {G^*}^{00}_{\topo}$; if $G$ is definably extremely amenable, then $G^*={G^*}^{000}_{M}= {G^*}^{00}_{M}$.
\end{prop}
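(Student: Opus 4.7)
I would split the argument by treating the two equalities somewhat separately, both via extreme amenability applied to naturally associated $G$-flows.

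For $G^* = {G^*}^{00}_{\defi, \topo}$: the quotient $K := G^*/{G^*}^{00}_{\defi, \topo}$ is a compact Hausdorff topological group on which $G$ acts continuously by left translation through the canonical map $\pi \colon G \to K$ (the definable topological analogue of the Bohr compactification discussed in Subsection \ref{subsection: def top}). By definable topological extreme amenability, this $G$-flow has a fixed point $y_0$; then $\pi(g) y_0 = y_0$ forces $\pi(g) = e_K$, whence $G \subseteq {G^*}^{00}_{\defi, \topo}$. Since ${G^*}^{00}_{\defi, \topo}$ is type-definable over $M$, this lifts to $G^* \subseteq {G^*}^{00}_{\defi, \topo}$ by the standard argument: for each formula $\varphi(x)$ in its defining partial type (with parameters from $M$), the sentence $\forall x \in G \; \varphi(x)$ holds in $M$ and hence in $\C$.

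For $G^* = {G^*}^{000}_{\defi, \topo}$: I would apply extreme amenability to the universal definable topological $G$-ambit $X$, equipped with the right-topological semigroup structure described in Subsection \ref{subsection: def top}. The minimal subflow is then a singleton $\{u\}$ of $G$-fixed points, and $u$ is automatically a minimal idempotent (since $u \cdot u$ lies in the minimal left ideal $\{u\}$). By density of $\{[g] : g \in G\}$ in $X$ and right-continuity of the semigroup multiplication, the identity $[g] \cdot u = u$ valid for every $g \in G$ propagates to $p \cdot u = u$ for every $p \in X$. Unpacking this via the model-theoretic description of $X$ and its semigroup multiplication from Subsection \ref{subsection: def top}, and using the identification of the relevant $E$-class of $e$ with a subgroup of ${G^*}^{000}_{\defi, \topo}$, one concludes $G^* \subseteq {G^*}^{000}_{\defi, \topo}$, giving the second equality.

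The two particular cases in the proposition's final sentence follow by language specialization: adjoining predicates for all open subsets of $G$ identifies ${G^*}^{00}_{\defi, \topo}$ and ${G^*}^{000}_{\defi, \topo}$ with ${G^*}^{00}_{\topo}$ and ${G^*}^{000}_{\topo}$, while treating $G$ as discrete identifies them with ${G^*}^{00}_M$ and ${G^*}^{000}_M$. The main anticipated obstacle is the second equality: whereas for ${G^*}^{00}_{\defi, \topo}$ type-definability over $M$ provides an immediate lift from $G \subseteq {G^*}^{00}_{\defi, \topo}$ to $G^* \subseteq {G^*}^{00}_{\defi, \topo}$, the merely invariant character of ${G^*}^{000}_{\defi, \topo}$ forbids such a direct lift, so one must exploit the finer semigroup structure of the universal ambit together with the classical minimal-idempotent plus density technique from topological dynamics.
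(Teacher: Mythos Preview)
Your approach is considerably more elaborate than the paper's, and the second half has a genuine gap.

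The paper's proof is a single paragraph and handles both equalities at once. Take a $G$-fixed $p = \mu \cdot r \in S_G^\mu(M)$. Since $\mu \subseteq {G^*}^{000}_{\defi,\topo}$ and $rr^{-1} \subseteq {G^*}^{000}_M \subseteq {G^*}^{000}_{\defi,\topo}$, one has $pp^{-1} \subseteq {G^*}^{000}_{\defi,\topo}$ directly. For $pp^{-1} = G^*$: given $g \in G$ and $a \models p$, we have $ga \models gp = p$, so $g = (ga)a^{-1} \in pp^{-1}$; hence every $M$-formula $\varphi$ implied by $p$ satisfies $\varphi(G)\varphi(G)^{-1} = G$, and by elementarity $\varphi(G^*)\varphi(G^*)^{-1} = G^*$. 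No Bohr compactification, no Ellis semigroup, no minimal idempotents.

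Your first half (for ${G^*}^{00}_{\defi,\topo}$) is correct in spirit, though you should say why $K$ inherits a fixed point: definable topological extreme amenability, as defined here, only asserts a fixed point in $S_G^\mu(M)$, and you need the map $S_G^\mu(M) \to S_G(M)/E_1 \to K$ to push it forward.

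Your second half has a real problem. You work in the universal definable topological ambit $X = S_G(M)/E_1$ and then appeal to an ``identification of the relevant $E$-class of $e$ with a subgroup of ${G^*}^{000}_{\defi,\topo}$''. But the paper does not establish any such identification; in fact, Subsection~3.4 flags that whether $aE'b$ implies $ab^{-1} \in {G^*}^{000}_M$ is open, and this is exactly the kind of link you would need between $E_1'$-classes and ${G^*}^{000}_{\defi,\topo}$. Your semigroup argument that $p * u = u$ for all $p$ is fine, but the ``unpacking'' step does not go through in $S_G(M)/E_1$. If you instead run the same idea in $S_G^\mu(M)$ and use the semigroup epimorphism $\hat f \colon S_G^\mu(M) \to G^*/{G^*}^{000}_{\defi,\topo}$ (defined by $\mu\cdot\tp(a/M) \mapsto a/{G^*}^{000}_{\defi,\topo}$, well-defined for the same reasons as in the paper's direct proof), then $\hat f(p)\hat f(u) = \hat f(u)$ forces $\hat f$ to be constant, and you are done. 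But at that point you have essentially rediscovered the paper's observation $pp^{-1} \subseteq {G^*}^{000}_{\defi,\topo}$ dressed in heavier clothing.
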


Section \ref{section:amenability of Aut(M)} contains the main result of this paper.

\begin{thm}\label{thm: the main result of the paper}
Let $M$ be a countable, $\omega$-categorical structure. If $\aut(M)$ is amenable (as a topological group), then the theory of $M$ is G-compact.
\end{thm}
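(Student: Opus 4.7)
The plan is to apply Theorem~\ref{thm: main theorem in Section 2} to the topological group $G := \aut(M)$, and then to translate the resulting equality of topological connected components into G-compactness of $T := \Th(M)$.

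First I would check that $G$ fits the hypotheses of the topological version of the connected-components theorem. Since $M$ is countable and $\omega$-categorical, by Ryll--Nardzewski the pointwise stabilizer $\aut(M/\bar{a})$ of any finite tuple $\bar{a}$ from $M$ is an open subgroup of $G$, and these form a basis of open neighborhoods of the identity. Hence $G$ admits a basis of open neighborhoods of $e$ consisting of open subgroups. Combined with the amenability of $G$ assumed in the statement, the remark following Theorem~\ref{thm: main theorem in Section 2} (to the effect that the theorem implies Conjecture~\ref{con: topological version of the conjecture from KrPi} precisely for topological groups admitting such a basis) yields
\[
{G^*}^{00}_{\topo} \;=\; {G^*}^{000}_{\topo}.
\]

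Next I would translate this equality into a statement about Galois groups. The key bridge, to be extracted from the model-theoretic descriptions of the universal $G$-ambit and of ${G^*}^{00}_{\topo}$, ${G^*}^{000}_{\topo}$ developed in Subsection~\ref{subsection: topological category}, is the identification of the topological quotients $G^*/{G^*}^{00}_{\topo}$ and $G^*/{G^*}^{000}_{\topo}$ (each equipped with the logic topology) with $\gal_{KP}(T)$ and $\gal_L(T)$, respectively, compatibly with the canonical surjections $G^*/{G^*}^{000}_{\topo} \twoheadrightarrow G^*/{G^*}^{00}_{\topo}$ and $\gal_L(T) \twoheadrightarrow \gal_{KP}(T)$. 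Given this dictionary, the displayed equality forces the epimorphism $\gal_L(T) \twoheadrightarrow \gal_{KP}(T)$ to be an isomorphism, which is exactly G-compactness of $T$. For G-compactness after naming any finite set $A \subseteq M$, I would rerun the argument with the open subgroup $\aut(M/A) \le \aut(M)$ in place of $\aut(M)$: it is amenable as an open subgroup of an amenable topological group, it inherits a basis of open subgroups, and the Galois-theoretic dictionary is natural in parameters.

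The principal obstacle is this bridging step: establishing, for countable $\omega$-categorical $M$, the precise compatible identification of $G^*/{G^*}^{000}_{\topo}$ with $\gal_L(T)$ and of $G^*/{G^*}^{00}_{\topo}$ with $\gal_{KP}(T)$. The decisive input should be $\omega$-categoricity of $M$, which makes $\aut(M)$ ``dense enough'' in $\aut(\C)$ modulo both $\autf_L(\C)$ and $\autf_{KP}(\C)$, so that Lascar and Kim--Pillay equivalences on tuples in $\C$ are witnessed by automorphisms already coming from $\aut(M)$; combined with the topological-ambit description of the two components from Section~\ref{section: dynamics of top groups}, this density should yield the required compatible identifications of quotients with Galois groups.
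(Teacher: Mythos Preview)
Your proposal is correct and follows essentially the same route as the paper: verify that $G=\aut(M)$ has a basis at $e$ of open subgroups, apply the topological connected-components theorem to get ${G^*}^{00}_{\topo}={G^*}^{000}_{\topo}$, and then bridge to the Galois groups. The bridging step you flag as the principal obstacle is exactly where the paper does the real work (constructing the isomorphisms $\rho$ and $\theta$ via a two-sorted structure $\mathcal{M}$ with $G$ acting on $M$ and a double monster model ${\mathcal M}^{**}\succ{\mathcal M}^{*}\succ{\mathcal M}$, using $\omega$-categoricity to show that every automorphism of $M^*$ is induced by an element of $G^{**}$); your density intuition is on the right track, but carrying it out is the substance of the argument.
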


Let $T$ be the theory of $M$.
In order to prove this theorem, we treat $G:=\aut(M)$ as a group definable in the structure 
${\mathcal M}$ consisting of the structure $M$ together with the group $G$ acting on $M$, expanded by predicates for all 
open subsets of $G$.
Then we find group isomorphisms $\rho \colon \gal_L(T) \to G^*/{G^*}^{000}_{\topo}$ and $\theta \colon \gal_{KP}(T) \to  G^*/{G^*}^{00}_{\topo}$ such that the following diagram commutes

\begin{figure}[H]
		\centering
		\begin{tikzcd}
			\gal_L(T) \arrow[r,"h"]\arrow[d,"\rho"]&\gal_{KP}(T)\arrow[d,"\theta"] \\
			G^*/{G^*}^{000}_{\topo}\arrow[r] & G^*/{G^*}^{00}_{\topo},
		\end{tikzcd}
	\end{figure}
\noindent
where the horizontal maps are the obvious epimorphisms.
Note that $G=\aut(M)$ has a basis of open neighborhoods of the identity consisting of open subgroups.
Thus, using the above diagram together with Theorem \ref{thm: main theorem in Section 2} (more precisely, we use Conjecture \ref{con: topological version of the conjecture from KrPi} for groups possessing a basis of open neighborhoods of the identity consisting of open subgroups), we get that $h$ is an isomorphism, i.e. $T$ is G-compact. We find this method interesting in its own right, as it provides a dictionary between Galois groups and quotients by connected components in the $\omega$-categorical world. Udi Hrushovski has suggested to us a proof of Theorem \ref{thm: the main result of the paper} using alternative methods.

At the very end of this paper, using Proposition \ref{prop: extremely amenable} and the existence of the isomorphism $\rho$, we obtain the following corollary, although it can be easily seen directly, as Hrushovski mentioned to us, and is even implicit in \cite{HrPi}.

\begin{cor}\label{theorem: new theorem}
Let $M$ be a countable, $\omega$-categorical structure. If $\aut(M)$ is extremely amenable (as a topological group), then the theory of $M$ is G-trivial.
\end{cor}

Theorem \ref{thm: the main result of the paper} and Corollary \ref{theorem: new theorem} together with the Kechris, Pestov, Todor\v cević machinery \cite{KPT} (proving that groups of automorphisms of various  Fraiss\'{e} structures are [extremely] amenable) can be used to show that some $\omega$-categorical structures are G-compact or even G-trivial. For example, by \cite[Theorem 6.14]{KPT}, we know that the automorphism group of the {\em countable atomless Boolean algebra with the canonical ordering} is extremely amenable, so the theory of this algebra is G-trivial by Corollary \ref{theorem: new theorem}.


\section{A few preliminaries}\label{section: preliminaries}

For a detailed exposition of preliminaries concerning the topological dynamics of definable groups the reader is referred to Section 1 in \cite{KrPi}. Here, we only recall a few basic things. Some other definitions are recalled in the appropriate places of this paper.

In the whole paper, for a group [or a set] $G$ definable in a structure $M$, by $G^*$ we denote the interpretation of $G$ in the monster model in which we are working (i.e. a $\kappa$-saturated and strongly $\kappa$-homogeneous elementary extension of $M$ for a ``sufficiently large'' strong limit cardinal $\kappa$). The monster model will usually be denoted by $\C$.  Partial types (over parameters) will often be identified with sets of their realizations in $\C$ which are called type-definable sets.
An invariant (i.e. invariant under $\aut(\C)$) equivalence relation on a  type-definable subset of a product of a small  number $\lambda$ (i.e. $\lambda <\kappa$) of sorts of $\C$ is said to be {\em bounded} if it has less than $\kappa$ many classes (equivalently, at most $2^{|T|+\lambda}$ classes).
	
	\begin{dfn}
		Let $E$ be a bounded, invariant equivalence relation on a type-definable subset $P$ of some product of sorts of $\C$. We define the {\em logic topology} on $P/E$ by saying that a subset $D \subseteq P/E$ is closed if its preimage in $P$ is type-definable.
	\end{dfn}

We follow the convention that compact spaces are Hausdorff by definition; ``compact'' spaces which are not necessarily Hausdorff will be called {\em quasi-compact}. 	
	It is folklore that $P/E$ is quasi-compact, and $P/E$ is Hausdorff if and only if $E$ is type-definable. In the case when $P$ is a definable group and $H$ is an invariant, bounded index, normal subgroup, the quotient $P/H$ is a quasi-compact (so not necessarily Hausdorff) topological group; it is Hausdorff if and only if $H$ is type-definable. If $\C$ is a monster model in languages ${\mathcal L} \subseteq {\mathcal L'}$ and $E$ is bounded and type-definable in ${\mathcal L}$, then the logic topologies on $P/E$ computed in ${\mathcal L}$ and ${\mathcal L'}$ coincide, because they are compact and the latter one is stronger than the former. In fact, this is true even if $E$ is only invariant in ${\mathcal L}$ (and bounded), because if $D \subseteq P/E$ is closed in the logic topology computed in ${\mathcal L'}$, then the preimage of $D$, being type-definable in ${\mathcal L'}$ and invariant in the sense of ${\mathcal L}$ over any given model $M$ (which follows from the assumption that $E$ is bounded and invariant in ${\mathcal L}$), must be $M$-type-definable in ${\mathcal L}$, so $D$ is closed in the logic topology computed in ${\mathcal L}$.

Recall from \cite{GPP} or \cite{KrPi} that for a group $G$ definable in a structure $M$, a map $f \colon G \to C$, where $C$ is a compact (Hausdorff) space, is said to be {\em definable} if for any disjoint, closed subsets $C_1$ and $C_2$ of $C$, the preimages $f^{-1}[C_1]$ and $f^{-1}[C_2]$ can be separated by a definable set. By \cite[Lemma 3.2]{GPP}, we know that this happens if and only if $f$ is the restriction of a map $f^* \colon G^* \to C$ which is {\em $M$-definable} in the sense that the preimage under $f^*$ of any closed subset of $C$ is type-definable over $M$. If such a map $f^*$ exists, it is unique and it is given by the formula $$f^*(a)=\bigcap_{\varphi \in \tp(a/M)} \cl(f[\varphi(M)]).$$

Let $G$ be a topological group. Recall that a (topological) {\em $G$-flow} is a pair $(G,X)$, where $X$ is a compact (Hausdorff) space on which $G$ acts continuously. A {\em $G$-ambit} is a $G$-flow with a distinguished point whose $G$-orbit is dense. Suppose $G$ is a definable group (in some structure). In the so-called ``definable category'', the topology on $G$  is irrelevant (i.e. $G$ is treated as a discrete group), and a $G$-flow $(G,X)$ is said to be {\em definable} if for any $x \in X$ the map $f_x\colon G \to X$ given by $f_x(g)=gx$ is definable.

\section{Topological dynamics for topological groups via model theory}\label{section: dynamics of top groups}

The goal of this section is to recall and extend the model-theoretic approach to the topological dynamics of a topological group $G$ from \cite{GPP} and \cite{PeSt}. Facts \ref{fac: from GPP}, \ref{fct: continuity of the ambit} and \ref{fac: universal G-ambit} come from \cite{GPP} and \cite{PeSt}; Proposition \ref{prop: characterization of G^00_top}, Corollary \ref{cor: formula for G^00_top} and Proposition \ref{prop: description of *} are new. In the rest of this section, we introduce new notions (mainly connected components in various categories) and we obtain some results about them. (As an application, in Corollary \ref{cor: classical application}, we get new information about certain classical compactifications of the universal cover of $\Sl_2(\R)$.) The material developed in this section is essential to formulate the main results of Section \ref{section: amenability of G} and to prove the main results of this paper in Section \ref{section:amenability of Aut(M)}.

\subsection{The topological category}\label{subsection: topological category}

Throughout this subsection, let $G$ be a topological (so Hausdorff) group which is $\emptyset$-definable in a first order structure $M$, and assume that predicates for all open subsets of $G$ are in the language.	

\begin{rem}\label{rem: continuous implies definable}
Each continuous function $f \colon G \to C$ from $G$ to a compact space $C$ is definable. In particular, each (topological) $G$-flow is definable.
\end{rem} 

\begin{proof}
This follows from the fact that the preimage of every closed set is closed and so definable.
\end{proof}

Because of this remark, in this subsection we just work in the category of (topological) $G$-flows.
The next lemma is an improvement of \cite[Lemma 3.2(i)]{GPP} for topological groups.

\begin{lem}\label{lem: extending f}
Suppose $f \colon G \to C$ is a continuous map from $G$ to a compact space $C$. Then $f$ extends uniquely to an $M$-definable map $f^* \colon G^* \to C$. Moreover, $f^*$ is given by the formula $f^*(a)=\bigcap\{\cl(f[U]): U \in \tp(a/M)\;\, \mbox{is open}\}$. Furthermore, $f^*$ is $M$-definable in a strong sense, namely, for any closed $F \subseteq C$ the preimage ${f^*}^{-1}[F]$ is the intersection of sets of the form $U^*$ for some open subsets $U$ of $G$; more precisely, ${f^*}^{-1}[F]= \bigcap \{ f^{-1}[V]^*: V \supseteq F \;\, \mbox{is open}\}$. 
\end{lem}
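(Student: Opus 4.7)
My plan proceeds in three stages: obtain existence and uniqueness for free from the classical definable-case result of \cite{GPP}, then establish the strong-definability identity ${f^*}^{-1}[F]=\bigcap\{f^{-1}[V]^* : V\supseteq F \text{ open}\}$ for every closed $F\subseteq C$, and finally deduce the improved pointwise formula from it. For the first stage, Remark~\ref{rem: continuous implies definable} shows that $f$ is definable in the sense of \cite{GPP}, so \cite[Lemma 3.2]{GPP} supplies a unique $M$-definable extension $f^* \colon G^* \to C$ together with the classical formula $f^*(a)=\bigcap_{\varphi \in \tp(a/M)} \cl(f[\varphi(M)])$.

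The core work is the strong-definability identity. The key resource is that, since predicates for all open subsets of $G$ are in the language and $f$ is continuous, each $f^{-1}[V]$ (for $V$ open in $C$) is a definable open subset of $G$, whose complement is therefore also definable. For $\subseteq$: if $f^*(a)\in F$ but $a\notin f^{-1}[V]^*$ for some open $V\supseteq F$, then the complement $G\setminus f^{-1}[V]$ lies in $\tp(a/M)$, so the classical formula forces $f^*(a)\in \cl(f[G\setminus f^{-1}[V]])\subseteq C\setminus V$, contradicting $f^*(a)\in F\subseteq V$. For $\supseteq$: if $f^*(a)\notin F$, use normality of the compact Hausdorff space $C$ to separate $\{f^*(a)\}$ from $F$ by disjoint opens $W\ni f^*(a)$ and $V\supseteq F$; the hypothesis gives $f^{-1}[V]\in \tp(a/M)$, and hence by the classical formula $f^*(a)\in \cl(f[f^{-1}[V]])\subseteq \cl(V)\subseteq C\setminus W$, a contradiction.

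The pointwise formula $f^*(a)=\bigcap\{\cl(f[U]): U\in \tp(a/M)\text{ open}\}$ then follows by combining the strong-definability identity (applied to the closed singleton $\{f^*(a)\}$) with Hausdorffness of $C$. For $f^*(a)$ to lie in every $\cl(f[U])$ with open $U\in\tp(a/M)$: otherwise, pick an open $W\ni f^*(a)$ disjoint from $f[U]$; strong definability yields $f^{-1}[W]\in\tp(a/M)$, yet $U\cap f^{-1}[W]=\emptyset$ by construction, contradicting completeness of the type. Conversely, if some $c\neq f^*(a)$ lay in the intersection, separate $c$ and $f^*(a)$ by disjoint opens $V\ni c$ and $W\ni f^*(a)$; strong definability again gives $f^{-1}[W]\in\tp(a/M)$, so $c\in\cl(f[f^{-1}[W]])\subseteq\cl(W)$, contradicting $V\cap\cl(W)=\emptyset$.

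I expect the main obstacle to be the $\subseteq$ direction of the strong-definability identity, where one must translate ``$a\notin f^{-1}[V]^*$'' into membership of a definable closed set in $\tp(a/M)$; this is precisely where the assumption that predicates for open subsets of $G$ are in the language is essential, since without it complements of continuous preimages need not be first-order definable and the completeness-of-type argument breaks down. Everything else is a routine normality/Hausdorff manipulation in $C$ paired with the classical formula for $f^*$.
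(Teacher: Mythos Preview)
Your proof is correct. The arguments for the strong-definability identity are essentially the same as the paper's (your $(\supseteq)$ direction is identical; your $(\subseteq)$ direction is actually a bit cleaner, since you use the closed definable set $G\setminus f^{-1}[V]$ directly in the classical formula, whereas the paper interpolates an open set $f^{-1}[C_1]^c$ via $F\subseteq U_1\subseteq C_1\subseteq V$).

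The one organizational difference is the order: the paper first proves the ``moreover'' pointwise formula directly (by showing the intersection $\bigcap\{\cl(f[U]):U\in\tp(a/M)\text{ open}\}$ is a singleton via a separation argument), and then proves the ``furthermore'' strong-definability identity using this improved formula. You reverse this: you establish strong definability first using only the classical formula $f^*(a)=\bigcap_{\varphi\in\tp(a/M)}\cl(f[\varphi(M)])$, and then derive the pointwise open-sets formula by specializing to $F=\{f^*(a)\}$. Your route is marginally more economical, since the paper's direct singleton argument duplicates the same normality-and-separation pattern that reappears in the strong-definability proof, while you do that work once and harvest the pointwise formula as a corollary.
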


\begin{proof}
The existence and uniqueness of $f^*$ follows from \cite[Lemma 3.2(i)]{GPP} and Remark \ref{rem: continuous implies definable}. By the same lemma, we also know that $f^*(a)=\bigcap\{\cl(f[\varphi(M)]): \varphi(x) \in \tp(a/M)\}$. (By the way, in the comment between parenthesis in \cite{GPP} on the proof that $f^*$ is definable over $M$, $\Sigma(y)$ should be the collection of formulas $\varphi(y)$ over $M$ such that $f^{-1}[D'] \subseteq \varphi(M)$ for all closed $D'\supseteq D$ for which there is an open set $U$ such that $D \subseteq U \subseteq D'$.) 

So, for the ``moreover'' part, it is enough to show that the set $\bigcap\{\cl(f[U]): U \in \tp(a/M)\;\, \mbox{is open}\}$ is a singleton. Assume for a contradiction that there are distinct elements $x,y \in \bigcap\{\cl(f[U]): U \in \tp(a/M)\;\, \mbox{is open}\}$. One can find open sets $U_1,V_1,U_2,V_2$ and closed sets $C_1,C_2$ such that $x \in U_1\subseteq C_1\subseteq V_1$, $y \in U_2\subseteq C_2\subseteq V_2$ and $V_1 \cap V_2 = \emptyset$. Then the sets $f^{-1}[V_1]$ and $f^{-1}[V_2]$ are open (so definable) and disjoint. Thus, one of them does not belong to $\tp(a/M)$. Without loss $f^{-1}[V_1] \notin \tp(a/M)$. Then $f^{-1}[V_1]^c \in \tp(a/M)$. Put $V_1'=f^{-1}[C_1]^c$. Then $V_1'$ is open and $f^{-1}[V_1]^c \subseteq V_1'$, so $V_1' \in \tp(a/M)$, and hence $x \in \cl (f[V_1']) = \cl (f[f^{-1}[C_1]^c]) \subseteq \cl (C_1^c) \subseteq \cl (U_1^c) = U_1^c$, a contradiction.

In remains to prove the ``furthermore'' part, i.e. the equality $${f^*}^{-1}[F]= \bigcap \{ f^{-1}[V]^*: V \supseteq F \;\, \mbox{is open}\}.$$

$(\subseteq)$ Suppose for a contradiction that for some $a \in G^*$ we have $f^*(a) \in F$ but $a \notin \bigcap \{ f^{-1}[V]^*: V \supseteq F \;\, \mbox{is open}\}$. Then $a \in {f^{-1}[V]^*}^c$ for some open $V \supseteq F$. Choose an open set $U_1$ and a closed set $C_1$ such that $F \subseteq U_1 \subseteq C_1 \subseteq V$. Then $f^{-1}[V]^c \subseteq f^{-1}[C_1]^c$ and the last set is open, so $a \in  (f^{-1}[C_1]^c)^*$, and hence $f^*(a) \in \cl (f[ f^{-1}[C_1]^c]) \subseteq \cl (C_1^c) \subseteq \cl (U_1^c)=U_1^c$. Therefore $f^*(a) \notin F$, a contradiction.

($\supseteq$) Suppose for a contradiction that $a \in \bigcap \{ f^{-1}[V]^*: V \supseteq F \;\, \mbox{is open}\}$, but $f^*(a) \notin F$. Take open and disjoint sets $V_1$ and $V_2$ such that $F \subseteq V_1$ and $f^*(a) \in V_2$. Then $a \in f^{-1}[V_1]^*$, so $f^*(a) \in \cl(f[f^{-1}[V_1]]) \subseteq \cl (V_1) \subseteq V_2^c$, a contradiction. 
\end{proof}


Recall that for a set of parameters $A$, ${G^*}^{00}_A$ denotes the smallest $A$-type-definable, bounded index subgroup of $G^*$.

\begin{dfn}
We define ${G^*}^{00}_{\topo}$ to be the smallest bounded index subgroup of $G^*$ which is an intersection of some sets of the form $U^*$ for $U$ open in $G$. Let $\mu$ denote the intersection of all $U^*$'s for $U$ ranging over all open neighborhoods of $e$.
\end{dfn}

We see that ${G^*}^{00}_{\topo}$ is type-definable over $\emptyset$, and so ${G^*}^{00}_{\topo}\geq {G^*}^{00}_\emptyset$. $\mu$ is also a subgroup which is type-definable over $\emptyset$, but it may be of unbounded index. Clearly, $\mu \leq {G^*}^{00}_{\topo}$. It is also easy to see directly from the definition that both $\mu$ and ${G^*}^{00}_{\topo}$ are normalized by $G$.

Recall that a {\em group compactification} of $G$ is a continuous homomorphism from $G$ to a compact (Hausdorff) group $K$ with dense image (or just this compact group $K$). (For convenience we will write ``compactification'' instead of ``group compactification''.) There is always a unique up to isomorphism universal compactification of $G$, and it is called the {\em (topological) Bohr compactification} of $G$.

The next fact is Proposition 2.1 from \cite{GPP} which gave \cite{Gi} as a reference for the proof. We give a direct proof for the reader's convenience (as we refer to this proof later several times) and to show that it can be obtained by the methods from Section 3 of \cite{GPP}.

\begin{fct}\label{fac: from GPP}
i) ${G^*}^{00}_{\topo}$ is a normal subgroup of $G^*$.\\
ii) The quotient mapping $\pi \colon G \to G^*/{G^*}^{00}_{\topo}$ is the Bohr compactification of $G$.
\end{fct}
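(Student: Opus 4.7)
My plan is to prove (i) and (ii) simultaneously by identifying the quotient $G^*/{G^*}^{00}_{\topo}$ with the Bohr compactification $bG$ via the canonical extension of the Bohr map. Let $\beta \colon G \to bG$ denote the classical Bohr compactification. Since $\beta$ is continuous, it is definable by Remark \ref{rem: continuous implies definable}, so Lemma \ref{lem: extending f} produces a unique $M$-definable extension $\beta^* \colon G^* \to bG$, and its ``furthermore'' clause yields $\ker \beta^* = \bigcap_V \beta^{-1}[V]^*$ with $V$ ranging over open neighborhoods of $e_{bG}$ in $bG$. Since each $\beta^{-1}[V]$ is open in $G$, the kernel $K := \ker \beta^*$ is automatically of ``open form.''

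The next step is to verify that $\beta^*$ is a group homomorphism. Given $a, b \in G^*$ and any open $W \ni \beta^*(a)\beta^*(b)$, continuity of multiplication in $bG$ supplies opens $V_1 \ni \beta^*(a)$, $V_2 \ni \beta^*(b)$ with $V_1 V_2 \subseteq W$; continuity of $\beta$ makes each $\beta^{-1}[V_i]$ open in $G$ and places them in $\tp(a/M)$ and $\tp(b/M)$ respectively. Their open product $\beta^{-1}[V_1]\beta^{-1}[V_2]$ then lies in $\tp(ab/M)$, and applying the defining formula of Lemma \ref{lem: extending f} puts $\beta^*(ab) \in \cl(V_1 V_2) \subseteq \cl W$; Hausdorffness of $bG$ forces $\beta^*(ab) = \beta^*(a)\beta^*(b)$. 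Consequently $K$ is a normal (by this homomorphism property), $\emptyset$-type-definable, bounded-index (since $G^*/K$ injects into the compact space $bG$) subgroup of open form, so minimality of ${G^*}^{00}_{\topo}$ gives ${G^*}^{00}_{\topo} \subseteq K$.

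To conclude, I would establish the reverse inclusion $K \subseteq {G^*}^{00}_{\topo}$, which simultaneously upgrades the $G$-normalization of ${G^*}^{00}_{\topo}$ (recorded in the paragraph preceding the Fact) to full normality in $G^*$ and identifies the quotient with $bG$. The plan is to show that every bounded-index open-form subgroup $L$ of $G^*$ must contain $K$; applied to $L = {G^*}^{00}_{\topo}$ this forces equality. The quasi-compact Hausdorff space $G^*/L$ (logic topology) carries a well-defined continuous left $G^*$-action by translation, together with a continuous map with dense image from $G$. Using the universal property of $bG$ in tandem with the uniqueness of $M$-definable extensions in Lemma \ref{lem: extending f}, the composition $G \hookrightarrow G^* \twoheadrightarrow G^*/L$ should factor through $\beta$, producing a factorization at the level of $G^*$ that compares kernels and yields $K \subseteq L$.

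The principal obstacle is precisely this last step. Because $G^*/L$ is a priori only a compact Hausdorff $G$-\emph{space} and not a group (pending normality of $L$), the universal property of $bG$ as a topological group does not directly apply, and one must extract from $L$ a genuine \emph{group} compactification of $G$ that ``witnesses'' $L$. I expect this to be the technical heart of the argument, and to rest on the ``furthermore'' clause of Lemma \ref{lem: extending f} together with a saturation/compactness argument exploiting the elementary extension $G \prec G^*$ — possibly by taking a suitable enveloping group of the $G^*$-action on $G^*/L$ and comparing it, via definability, with $bG$.
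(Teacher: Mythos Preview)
Your forward inclusion ${G^*}^{00}_{\topo} \subseteq K := \ker \beta^*$ is correct and is essentially the paper's universality argument for (ii) specialized to $C = bG$: for any compactification $f\colon G \to C$, the extension $f^*$ is a homomorphism and $\ker f^*$ is of ``open form'' and bounded index, hence contains ${G^*}^{00}_{\topo}$. Your proof that $\beta^*$ is a homomorphism is also fine.

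The gap is exactly where you locate it, and your proposed workarounds do not close it. To get $K \subseteq {G^*}^{00}_{\topo}$ via the universal property of $bG$, you need $G \to G^*/{G^*}^{00}_{\topo}$ to be a \emph{group} compactification, which already presupposes (i). Passing to a general open-form bounded-index $L$ does not help: the coset space $G^*/L$ is only a $G$-space, the universal property of $bG$ says nothing about maps to such spaces, and ``taking an enveloping group'' of this action has no reason to reproduce $L$ as a kernel. So the circularity is real, and the argument as written does not prove (i).

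The paper breaks the circle by proving (i) directly, with no reference to $bG$, via a short compactness/elementarity trick that you are missing. Let $H = \bigcap_{g\in G^*} ({G^*}^{00}_{\topo})^g$ be the normal core; it is type-definable and $\aut(\C)$-invariant, hence $\emptyset$-type-definable. Fix $\varphi(x)$ implied by $H$. By compactness there are opens $U_1,\dots,U_n$ (from the open-form description of ${G^*}^{00}_{\topo}$) and $a_1,\dots,a_n \in G^*$ with $\bigcap_i (U_i^*)^{a_i} \subseteq \varphi(G^*)$. This is a first-order statement with parameters $a_i$, so by elementarity $M\prec \C$ one finds $g_1,\dots,g_n \in G$ with $\bigcap_i (U_i^{g_i})^* \subseteq \varphi(G^*)$. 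Now use the already-noted fact that ${G^*}^{00}_{\topo}$ is normalized by $G$: each $U_i^{g_i}$ is again open and still lies in the (maximal) family defining ${G^*}^{00}_{\topo}$, so ${G^*}^{00}_{\topo} \subseteq \bigcap_i (U_i^{g_i})^* \subseteq \varphi(G^*)$. As $\varphi$ was arbitrary, ${G^*}^{00}_{\topo} \subseteq H$, hence ${G^*}^{00}_{\topo} = H$ is normal. With (i) in hand, density and continuity of $\pi$ are immediate (continuity uses the open-form description plus compactness), and your argument that $\ker f^* \supseteq {G^*}^{00}_{\topo}$ for every compactification $f$ gives the universality in (ii).
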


\begin{proof}
(i) We have that ${G^*}^{00}_{\topo} =\bigcap\{ U^*: U \in {\mathcal U}\}$, where $\mathcal U$ is a family of some open neighborhoods of $e$ in $G$. We can assume that $\mathcal U$ is maximal with this property. Let $R$ be the set of representatives of right cosets of ${G^*}^{00}_{\topo}$ in $G^*$. Then 
$$H:=\bigcap_{g \in G^*}({G^*}^{00}_{\topo})^g=\bigcap_{g \in R}({G^*}^{00}_{\topo})^g,$$
so $H$ is invariant (over $\emptyset$) and type-definable, and hence it is $\emptyset$-type-definable.

Take any $\varphi(x) \in H(x)$. Then, by compactness, there are $U_1,\dots,U_n \in \mathcal U$ and $a_1,\dots,a_n \in R$ such that ${U_1^*}^{a_1} \cap \dots \cap {U_n^*}^{a_n} \subseteq \varphi(G^*)$.  So there are $g_1,\dots,g_n \in G$ for which $U_1^{g_1}\cap \dots \cap U_n^{g_n} \subseteq  \varphi(G)$; then $(U_1^{g_1})^* \cap \dots \cap (U_n^{g_n})^* \subseteq \varphi(G^*)$. On the other hand, since ${G^*}^{00}_{\topo}$ is normalized by $G$, we see that $\mathcal U$ is closed under conjugation by the elements of $G$. Therefore, ${G^*}^{00}_{\topo} \subseteq \varphi(G^*)$. Thus, we have proved that ${G^*}^{00}_{\topo} = H$, which means that ${G^*}^{00}_{\topo}$ is normal.\\[1mm]
(ii) The density of $\pi[G]$ in $G^*/{G^*}^{00}_{\topo}$ equipped with the logic topology is folklore (the preimage under the quotient map of a non-empty open set is a $\bigvee$-definable over $M$ subset of $G^*$, so it has a point in $G$). 

To see that $\pi$ is continuous, note that if $g \in \pi^{-1}[V]$ for an open $V \subseteq G^*/{G^*}^{00}_{\topo}$, then, by the definition of ${G^*}^{00}_{\topo}$ and compactness, there is an open set $U \subseteq G$ such that $gU^*/{G^*}^{00}_{\topo} \subseteq V$, so $gU \subseteq \pi^{-1}[V]$.

To get that $\pi$ is universal, we will apply the proof of \cite[Proposition 3.4]{GPP}. Consider any compactification $f\colon G \to C$. By Lemma \ref{lem: extending f}, there is a unique $M$-definable (even $\emptyset$-definable) $f^*\colon G^* \to C$ extending $f$. 
By the proof of \cite[Proposition 3.4]{GPP}, we know that $f^*$ is a group homomorphism. By the last part of Lemma \ref{lem: extending f}, we conclude that $\ker (f^*)$ is a  bounded index, normal subgroup which is an intersection of some sets of the form $U^*$ for $U$ open in $G$. Since ${G^*}^{00}_{\topo}$ is the smallest such a group, we finish as in the proof of \cite[Proposition 3.4]{GPP}. Namely, there is a natural continuous homomorphism from $G^*/{G^*}^{00}_{\topo}$ to $G^*/\ker(f^*)$, and $G^*/\ker(f^*)$ is naturally topologically isomorphic with $C$, so we get a continuous homomorphism  from $G^*/{G^*}^{00}_{\topo}$ to $C$ which commutes with $\pi$ and $f$.
\end{proof}

Now, we give an equivalent description of ${G^*}^{00}_{\topo}$.

\begin{prop}\label{prop: characterization of G^00_top}
${G^*}^{00}_{\topo}$ is the smallest $M$-type-definable, bounded index subgroup of $G^*$ which contains $\mu$.
\end{prop}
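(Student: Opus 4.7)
The plan is to verify the two halves of the characterisation separately. First I would check that ${G^*}^{00}_{\topo}$ itself belongs to the family under consideration: it is $\emptyset$-type-definable (each $U^*$ is a $\emptyset$-definable predicate, by the blanket assumption on the language), of bounded index by definition, and contains $\mu$. For the last point the key observation is that any open $U$ appearing in the defining intersection of ${G^*}^{00}_{\topo}$ must satisfy $e \in U$ (since ${G^*}^{00}_{\topo}$ is a subgroup, $e \in {G^*}^{00}_{\topo} \subseteq U^*$, and $U \subseteq G$), so each such $U$ is an open neighbourhood of $e$; therefore $\mu = \bigcap\{V^* : V \ni e \text{ open}\} \subseteq {G^*}^{00}_{\topo}$.

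For the minimality direction, let $H$ be an $M$-type-definable, bounded-index subgroup of $G^*$ containing $\mu$. I would reduce to the normal case in two stages. First replace $H$ by $H^G := \bigcap_{g \in G} gHg^{-1}$: this is $M$-type-definable (since $G \subseteq M$), of bounded index, $G$-normalised, and still contains $\mu$, because $g\mu g^{-1} = \mu \subseteq H$ for every $g \in G$. Then pass to $K := \bigcap_{g \in G^*} gH^Gg^{-1}$, which is $M$-type-definable (via a universal quantifier over $g$), $G^*$-normal, and of bounded index. The delicate step is verifying that $\mu \subseteq K$, equivalently $g^{-1}\mu g \subseteq H^G$ for every $g \in G^*$; I would attempt this by a compactness-and-elementarity argument exploiting that $\mu \subseteq {G^*}^{00}_{\topo}$ together with the $G^*$-normality of ${G^*}^{00}_{\topo}$ from Fact~\ref{fac: from GPP}(i), which gives $g^{-1}\mu g \subseteq {G^*}^{00}_{\topo}$ for every $g$ and thereby reduces matters to a comparison between $H^G$ and ${G^*}^{00}_{\topo}$.

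Once such a $K$ is in hand, $G^*/K$ is a compact Hausdorff topological group, and I would show the map $q \colon G \to G^*/K$, $g \mapsto gK$, is continuous. At the identity: for any open $V \ni e_{G^*/K}$, disjointness of $K$ from the $M$-type-definable set $\pi_K^{-1}(V^c)$ yields by compactness an $M$-formula $\varphi$ with $K \subseteq \varphi(G^*) \subseteq \pi_K^{-1}(V)$; since $\mu \subseteq K \subseteq \varphi(G^*)$, a second application of compactness, this time to the defining intersection of $\mu$, produces an open neighbourhood $U$ of $e$ in $G$ with $U^* \subseteq \varphi(G^*)$, whence $q(U) \subseteq V$. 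Continuity at other points follows because left translation is a homeomorphism of both $G$ and $G^*/K$. Then Bohr universality (Fact~\ref{fac: from GPP}(ii)) provides a continuous homomorphism $\phi \colon G^*/{G^*}^{00}_{\topo} \to G^*/K$ with $\phi \circ \pi = q$. The composition $\phi \circ \pi^*$ (where $\pi^* \colon G^* \to G^*/{G^*}^{00}_{\topo}$ is the natural quotient) and the natural quotient $\pi_K$ are both $M$-definable extensions of $q$ to $G^*$, so they agree by the uniqueness clause of Lemma~\ref{lem: extending f}; evaluating at any $a \in {G^*}^{00}_{\topo}$ yields $\pi_K(a) = \phi(e) = e_{G^*/K}$, i.e.\ $a \in K \subseteq H$.

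The chief technical obstacle is the verification of $\mu \subseteq K$, because $\mu$ itself need not be $G^*$-normal when $G$ lacks small invariant neighbourhoods; the two-stage reduction from $H$ through $H^G$ to $K$ must therefore be executed so that enough normality and $M$-type-definability survive for the Bohr-factorisation step to apply.
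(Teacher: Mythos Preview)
Your outline is on the right track, but the step you yourself flag as delicate has a genuine gap: the argument you sketch for $\mu \subseteq K$ is circular. You propose to use $g^{-1}\mu g \subseteq {G^*}^{00}_{\topo}$ and then ``reduce to a comparison between $H^G$ and ${G^*}^{00}_{\topo}$''; but the only useful comparison here would be ${G^*}^{00}_{\topo} \subseteq H^G$, and since ${G^*}^{00}_{\topo}$ is $G$-normalised that is equivalent to ${G^*}^{00}_{\topo} \subseteq H$, which is exactly what you are trying to prove.

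The correct compactness-and-elementarity argument does not mention ${G^*}^{00}_{\topo}$ at all; it shows directly that $K = H^G$, whence $\mu \subseteq H^G$ already gives $\mu \subseteq K$. This is precisely the argument from the proof of Fact~\ref{fac: from GPP}(i), applied to $H^G$ in place of ${G^*}^{00}_{\topo}$: since $H^G$ is $M$-type-definable, of bounded index, and $G$-normalised, for any $M$-formula $\psi$ with $K \subseteq \psi(G^*)$ one finds by compactness finitely many $M$-formulas $\varphi_i$ containing $H^G$ and elements $a_i \in G^*$ with $\bigcap_i \varphi_i(G^*)^{a_i} \subseteq \psi(G^*)$; this inclusion is first-order in the $a_i$, so by elementarity it is witnessed by some $g_i \in G$; and then each $\varphi_i(G^*)^{g_i}$ still contains $H^G$ by $G$-normalisation, so $H^G \subseteq \psi(G^*)$, hence $H^G = K$. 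With this correction your argument goes through.

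The paper's route is slightly cleaner: rather than starting from an arbitrary $H$ and passing to $H^G$ and then $K$, it works directly with the \emph{smallest} $M$-type-definable bounded-index subgroup containing $\mu$, which is $G$-normalised by minimality (since $\mu$ is); the same compactness-and-elementarity argument then shows it is already $G^*$-normal, the quotient map from $G$ is a compactification by the continuity argument you give, and Bohr universality applied in both directions (together with density of the image of $G$) yields equality with ${G^*}^{00}_{\topo}$. Your uniqueness-of-$M$-definable-extension trick at the end is a valid alternative to the two-sided factorisation.
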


\begin{proof}
Denote this smallest subgroup by ${G^*}^{00-}_{\topo}$. Clearly, ${G^*}^{00-}_{\topo}\leq {G^*}^{00}_{\topo}$. To show the opposite inclusion, we need to check that  ${G^*}^{00-}_{\topo}$ is a normal subgroup of $G^*$ and that the natural map $\pi^-:G \to  G^*/{G^*}^{00-}_{\topo}$ is a compactification of $G^*$. Indeed, then, by Fact \ref{fac: from GPP}, we have a continuous homomorphism $\sigma :  G^*/{G^*}^{00}_{\topo} \to G^*/{G^*}^{00-}_{\topo}$ commuting with the natural maps from $G$. But we also have a natural continuous homomorphism $\tau: G^*/{G^*}^{00-}_{\topo} \to G^*/{G^*}^{00}_{\topo}$ which clearly commutes with the maps from $G$. Thus, $\sigma \circ \tau$ is the identity on $\pi^-[G]$, so it is the identity on $G^*/{G^*}^{00-}_{\topo}$. Hence, $\tau$ is injective, and so ${G^*}^{00-}_{\topo}= {G^*}^{00}_{\topo}$. 

To see that ${G^*}^{00-}_{\topo}$ is a normal subgroup of $G^*$, one should apply a similar argument to the above proof that ${G^*}^{00}_{\topo}$ is normal. 
For this, first notice that since $\mu$ is normalized by $G$, so is ${G^*}^{00-}_{\topo}$. Next, write ${G^*}^{00-}_{\topo} = \bigcap\{ U^*: U \in {\mathcal U}\}$ for some family $\mathcal U$ of definable in $M$ subsets of $G$, and choose $\mathcal{U}$ maximal with this property. Then $\mathcal{U}$ is closed under conjugation by the elements of $G$, and so the proof of normality of ${G^*}^{00}_{\topo}$ goes through.

The fact that $\pi^-$ is a compactification of $G$ means that $\pi^-[G]$ is dense and that $\pi^-$ is continuous. The first part is folklore (the same arguments as for $\pi$). Continuity of $\pi^-$ also follows as for $\pi$ (using the assumption that $\mu \leq {G^*}^{00-}_{\topo}$).
\end{proof}

\begin{cor}\label{cor: formula for G^00_top} Recall that in this subsection we always assume that predicates for all open subsets of $G$ are in the language.\\
i) ${G^*}^{00}_{\topo}$ and $G^*/{G^*}^{00}_{\topo}$ (as a topological group with the logic topology) do not depend on the choice of the language.\\
ii) ${G^*}^{00}_{\topo}=\mu \cdot {G^*}^{00}_{\emptyset}=\mu \cdot {G^*}^{00}_{M}$.
\end{cor}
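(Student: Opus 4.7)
The plan for (i) is to inspect the construction of ${G^*}^{00}_{\topo}$ and to see that nothing in it depends on the ambient language beyond the interpretations of the predicates $P_U$ and the group operation, and then to invoke the remark from the preliminaries that the logic topology on a quotient by a type-definable bounded equivalence relation is language-independent. Concretely, fix languages $\mathcal{L} \subseteq \mathcal{L}'$ each containing predicates for all open subsets of $G$, and realize both in the reduct of a common monster $\C$ of the $\mathcal{L}'$-theory (reducts preserve saturation). Then $G^*$ and the notion of ``bounded index'' agree in both languages, and each set $U^*$ is interpreted by the same atomic predicate $P_U$. So the class of bounded-index subgroups of $G^*$ expressible as intersections of sets $U^*$ is identical in the two languages, and hence so is its smallest element ${G^*}^{00}_{\topo}$. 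The quotient $G^*/{G^*}^{00}_{\topo}$ as a group is then manifestly language-independent, and its logic topology agrees in the two languages by the preliminary observation applied to the $\emptyset$-type-definable equivalence relation of sharing a coset of ${G^*}^{00}_{\topo}$.

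For (ii), set $N := \mu \cdot {G^*}^{00}_M$. Normality of ${G^*}^{00}_M$ in $G^*$ makes $N$ a normal subgroup, bounded index of ${G^*}^{00}_M$ gives bounded index of $N$, and trivially $\mu \subseteq N$. The plan is to show in addition that $N$ is $M$-type-definable, and then apply Proposition~\ref{prop: characterization of G^00_top}. For $x \in G^*$, the condition $x \in N$ amounts to $x \cdot {G^*}^{00}_M \cap \mu \neq \emptyset$, equivalently to the consistency over $M \cup \{x\}$ of the partial type in $v$ expressing $v \in {G^*}^{00}_M$ and $xv \in \mu$. Since $M$-definable supersets of ${G^*}^{00}_M$ and open neighborhoods of $e$ are each closed under finite intersection, saturation reduces this to the demand that for every such $V$ and every such $U$ there exist $v \in V$ with $xv \in U^*$, i.e.\ that $x \in U^* \cdot V^{-1}$. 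Each set $U^* \cdot V^{-1}$ is $M$-definable via $\exists v \bigl(V(v) \wedge P_U(xv)\bigr)$, so
\[
N \;=\; \bigcap_{U, V} U^* \cdot V^{-1}
\]
is $M$-type-definable. Proposition~\ref{prop: characterization of G^00_top} then yields ${G^*}^{00}_{\topo} \subseteq N$; conversely ${G^*}^{00}_{\topo}$ is a group containing $\mu$ and containing ${G^*}^{00}_M$ (the latter because ${G^*}^{00}_M$ is the smallest $M$-type-definable bounded-index subgroup and ${G^*}^{00}_{\topo}$ is one such), so $N \subseteq {G^*}^{00}_{\topo}$. For the $\emptyset$-version, squeeze: ${G^*}^{00}_M \subseteq {G^*}^{00}_\emptyset \subseteq {G^*}^{00}_{\topo}$ (the first because every $\emptyset$-type-definable set is $M$-type-definable, the second because ${G^*}^{00}_{\topo}$ is $\emptyset$-type-definable of bounded index), so $\mu \cdot {G^*}^{00}_M \subseteq \mu \cdot {G^*}^{00}_\emptyset \subseteq \mu \cdot {G^*}^{00}_{\topo} = {G^*}^{00}_{\topo}$, forcing equality with $\mu \cdot {G^*}^{00}_M = {G^*}^{00}_{\topo}$.

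The main obstacle is the equality $N = \bigcap_{U, V} U^* \cdot V^{-1}$. The inclusion $\subseteq$ is immediate, but the reverse passes from the finite-character information ``$xV \cap U^* \neq \emptyset$ for every $U, V$'' to the genuinely infinitary assertion ``there exists a common $v \in {G^*}^{00}_M$ with $xv \in \mu$'', which is exactly where $\kappa$-saturation of $\C$ must be used.
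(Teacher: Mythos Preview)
Your proof is correct and follows the same route as the paper's: both parts ultimately rest on Proposition~\ref{prop: characterization of G^00_top}, and for (ii) both arguments verify that $\mu\cdot{G^*}^{00}_M$ is an $M$-type-definable bounded-index subgroup containing $\mu$. The paper's proof is a two-line citation; the only thing you have added is an explicit compactness argument for the $M$-type-definability of the product $\mu\cdot{G^*}^{00}_M$, which the paper leaves implicit (this is the standard fact that a product of type-definable subsets of a definable group is type-definable in a sufficiently saturated model).

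One small quibble: normality of ${G^*}^{00}_M$ ensures that $N=\mu\cdot{G^*}^{00}_M$ is a \emph{subgroup}, but does not by itself give that $N$ is \emph{normal} in $G^*$ (for that you would also need $\mu$ normal, which is not asserted). Since Proposition~\ref{prop: characterization of G^00_top} does not require normality, and you never use normality of $N$ afterward, this slip is harmless --- but you should write ``subgroup'' rather than ``normal subgroup'' there.
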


\begin{proof}
(i) is clear from the definition of ${G^*}^{00}_{\topo}$.\\
(ii) follows from Proposition \ref{prop: characterization of G^00_top} and normality of ${G^*}^{00}_{\emptyset}$ and ${G^*}^{00}_{M}$.  
\end{proof}

Working in the category of definable maps, we have the obvious notion of the {\em definable Bohr compactification} of $G$. In \cite{GPP}, it was proved that the quotient map from $G$ to $G^*/{G^*}^{00}_M$ is the definable Bohr compactification of $G$. Now, we give an example of a topological group $G$ for which the  Bohr compactification differs from the definable Bohr compactification.

\begin{ex}\label{ex: Anand 1}
It is well-known (e.g. see \cite[Chapter VII, Section 5]{Ka}) that the Bohr compactification of a locally compact, abelian group $A$ is the Pontryagin dual of $A^*$ treated as a discrete group, where $A^*$ is the Pontryagin dual of $A$ treated as a topological group. Consider $A=(\mathbb R,+)$. 

For $A$ treated as a topological group, the Pontryagin dual $A^*$ is isomorphic to $A$, and the Pontryagin dual of $A^*$ treated as a discrete group is the group $B$ of all (not necessarily continuous in the usual topology on $A^*$) homomorphisms from $A$ to the circle group $S^{1}$, which has cardinality $2^{\mathfrak{c}}$.

For $A$ treated as discrete group, the Pontryagin dual is the above group $B$, and the Pontryagin dual of $B$ treated as a discrete group has cardinality $2^{2^{\mathfrak{c}}}$.

Thus, the Bohr compactification of $A$ as a topological group differs from the Bohr compactification of $A$ as a discrete group. In particular, for $G:=A$ and $M$ being $G$ expanded by predicates for all subsets of $G$, the latter compactification is the definable Bohr compactification of $G$, and we conclude that ${G^*}^{00}_M \subsetneq {G^*}^{00}_{\topo}$.
\end{ex}

Now, we repeat and elaborate slightly on the description of the universal $G$-ambit from \cite{GPP} and \cite{PeSt}.

Define $E_\mu$ to be the finest bounded, $M$-type-definable equivalence relation on $G^*$ containing the equivalence relation $\sim$ defined by 
$$a \sim b \iff ab^{-1} \in \mu.$$

Define $S_G^\mu(M)$ to be the quotient $S_G(M)/\!\!\sim_\mu$, where $\sim_\mu$ is the equivalence relation on $S_G(M)$ defined by
$$p \sim_\mu q \iff \mu \cdot p=\mu \cdot q$$
($\mu \cdot p$ denotes the type-definable set $\mu \cdot p(G^*)$, or, equivalently, the partial type defining this set).
Let $h \colon G^* \to S_G^\mu(M)$ be given by $h(a)=\mu \cdot \tp(a/M)$. We see that $h(a)=h(b)$ if and only if there is $b' \in G^*$ such that $b' \equiv_M b$ and $a \in \mu \cdot b'$; hence, $h(a)=h(b)$ is a type-definable equivalence relation, which implies that $\sim_\mu$ is a closed relation.
Thus, $S_G^\mu(M)$ equipped with the quotient topology is a compact space. $G^*/E_\mu$ considered with the logic topology is also compact. We see that the equivalence relation of lying in the same fiber of $h$ is exactly the composition $\sim \circ \equiv_M$. So, by the definition of $E_\mu$, we get

\begin{rem}\label{rem: E_mu as composition} 
$E_\mu=\;\sim \circ \equiv_M$.
\end{rem}

Note that for any $r \in S_G(M)$ the equivalence class $r/\!\! \sim_{\mu} \in S_G^\mu(M)$ consists of all complete types over $M$ extending the partial type $\mu \cdot r$; so we will freely identify $r/\!\!\sim_{\mu}$ with $\mu \cdot r$.

Now, it is easy to see that $G$ acts on both $G^*/E_\mu$ and $S_G^\mu(M)$, respectively  by 
$$g (a/E_\mu) = (ga)/E_\mu \;\; \mbox{and}\;\; g (\mu \cdot p)=\mu \cdot (gp).$$

The next remark follows immediately from Remark \ref{rem: E_mu as composition}, compactness of the spaces in question, and the above definitions of the actions of $G$.

\begin{rem}
The function $f \colon G^*/E_\mu \to S_G^\mu(M)$ given by $f(a/E_\mu)=\mu \cdot \tp(a/M)$ is a well-defined homeomorphism preserving the actions of $G$.
\end{rem}

The next fact is Claim A.5 from \cite{PeSt}.

\begin{fct}\label{fct: continuity of the ambit}
The action of $G$ on $S_G^\mu(M)$ is continuous.
\end{fct}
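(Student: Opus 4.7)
The plan is to deduce continuity of $\alpha \colon G \times S_G^\mu(M) \to S_G^\mu(M)$ by factoring it through the quotient map $\pi_\mu \colon S_G(M) \to S_G^\mu(M)$. Writing $\beta \colon G \times S_G(M) \to S_G(M)$ for the usual $G$-action on types, I would first verify continuity of the composite $\pi_\mu \circ \beta$, and then descend it to $\alpha$ via the identity $\pi_\mu \circ \beta = \alpha \circ (\id_G \times \pi_\mu)$ once $\id_G \times \pi_\mu$ is known to be a quotient map. (Note that $\beta$ itself need not be continuous on $S_G(M)$, so going through the quotient is essential.)

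For continuity of $\pi_\mu \circ \beta$, I would fix an open $V \subseteq S_G^\mu(M)$ and pull it back: $\pi_\mu^{-1}[V] \subseteq S_G(M)$ is open and $\sim_\mu$-saturated, so the corresponding subset $\tilde V \subseteq G^*$ is $\bigvee$-definable over $M$, $\equiv_M$-invariant, and, crucially, $\mu$-left-invariant. Given $(g_0, p_0) \in (\pi_\mu \circ \beta)^{-1}[V]$ and a realization $a_0 \models p_0$, the $\mu$-left-invariance promotes the pointwise condition $g_0 a_0 \in \tilde V$ to $\mu \cdot g_0 \cdot \tp(a_0/M)(G^*) \subseteq \tilde V$. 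A standard compactness argument, treating the partial type $\mu(x) \cup \tp(a_0/M)(z)$ as forcing the $\bigvee$-definable condition $x g_0 z \in \tilde V$, then produces an open neighborhood $U_0$ of $e$ in $G$ and a formula $\varphi(z) \in \tp(a_0/M)$ with $(U_0 g_0)^* \cdot \varphi(G^*) \subseteq \tilde V$. The open product neighborhood $(U_0 g_0) \times [\varphi]$ of $(g_0, p_0)$ is then contained in $(\pi_\mu \circ \beta)^{-1}[V]$, as required.

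To finish, I would observe that $\pi_\mu$ is a \emph{perfect} map: its fibers are closed subsets of the compact space $S_G(M)$ and hence compact, and $\pi_\mu$ itself is closed because $S_G^\mu(M) \cong G^*/E_\mu$ is Hausdorff, which is automatic from the definition of $E_\mu$ as the finest $M$-type-definable equivalence relation containing $\sim$. Invoking the standard general-topology fact that perfect maps are stable under products with the identity, $\id_G \times \pi_\mu$ is closed (and continuous and surjective), hence a quotient map, and the universal property of quotient maps delivers continuity of $\alpha$. The main subtlety of the whole argument sits in the second paragraph: the compactness reduction works precisely because $\tilde V$ is $\mu$-left-invariant, a property special to pullbacks from $S_G^\mu(M)$; the na\"ive action on $S_G(M)$ fails to be continuous in general (one can exhibit failure already for $G = (\R, +)$ using a positive infinitesimal realization), and collapsing by $\sim_\mu$ is exactly what restores joint continuity.
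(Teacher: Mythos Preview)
Your argument is correct. The paper does not give its own proof of this fact but simply cites it as Claim~A.5 of \cite{PeSt}, so there is no in-paper argument to compare against; your write-up supplies a clean self-contained proof. The two ingredients you isolate are exactly the right ones: (i) the compactness step works because the preimage $\tilde V$ in $G^*$ of an open set in $S_G^\mu(M)$ is simultaneously $\bigvee$-definable, $\equiv_M$-invariant, and $\mu$-left-invariant, so the type $\mu(x)\cup p_0(z)$ forces the $\bigvee$-definable condition $xg_0z\in\tilde V$ and compactness drops to a single open $U_0$ and a single $\varphi\in p_0$; and (ii) the descent through $\id_G\times\pi_\mu$ is justified because $\pi_\mu$ is a closed surjection between compact Hausdorff spaces with compact fibers, hence perfect, and perfect maps are universally closed, making $\id_G\times\pi_\mu$ a quotient map. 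Your side remark that the action on $S_G(M)$ itself can fail to be jointly continuous (already for $G=(\R,+)$) is also on point and explains why the quotient by $\sim_\mu$ is essential.
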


Thus, the action of $G$ on $G^*/E_\mu$ is also continuous, and $(G, S_G^\mu(M),\mu \cdot \tp(e/M))$ and $(G,G^*/E_\mu,e/E_\mu)$ are isomorphic $G$-ambits, which will be identified from now on.



The next fact is Proposition 2.2 from \cite{GPP}. We give a proof to show that, as for ${G^*}^{00}_{\topo}$ above, the universality property can be also proved by the methods from Section 3 of \cite{GPP} and because we will refer to this proof in the proof of Proposition \ref{prop: description of the universal definable topological ambit}. This is also done in a similar way in \cite{PeSt}. 

\begin{fct}\label{fac: universal G-ambit}
$(G,G^*/E_\mu,e/E_\mu)$ is the universal (topological) $G$-ambit.
\end{fct}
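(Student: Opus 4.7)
The plan is to mirror the proof of Fact \ref{fac: from GPP}(ii): turn the target ambit into a continuous map out of $G$, extend it to the monster via Lemma \ref{lem: extending f}, and verify that the extension factors through the quotient $G^* \to G^*/E_\mu$. Given a topological $G$-ambit $(G,X,x_0)$, let $f \colon G \to X$ be the continuous orbit map $f(g)=gx_{0}$, and let $f^{*}\colon G^{*}\to X$ be its unique $M$-definable extension. I would then consider the fiber equivalence relation $E_{f}$ of $f^{*}$. Since singletons in the Hausdorff space $X$ are closed, the ``furthermore'' part of Lemma \ref{lem: extending f} shows that each fiber of $f^{*}$ is $M$-type-definable, so $E_{f}$ is $M$-type-definable; and since $E_{f}$ has at most $|X|$ classes, it is bounded.

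The key step is to prove $\sim\,\subseteq E_{f}$, where $a\sim b$ iff $ab^{-1}\in\mu$. Once that is in hand, the universal property of $E_{\mu}$ as the finest bounded, $M$-type-definable equivalence relation containing $\sim$ gives $E_{\mu}\subseteq E_{f}$, so $f^{*}$ descends to a well-defined $\bar{f}\colon G^{*}/E_{\mu}\to X$. To show $\sim\,\subseteq E_{f}$, I would fix $a,b\in G^{*}$ with $ab^{-1}\in\mu$ and an arbitrary open neighborhood $V$ of $f^{*}(b)$, and aim to establish $a\in f^{-1}[V]^{*}$; the ``furthermore'' clause of Lemma \ref{lem: extending f} will then force $f^{*}(a)=f^{*}(b)$. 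Continuity of the $G$-action at $(e,f^{*}(b))$ produces open neighborhoods $U\ni e$ in $G$ and $V'\ni f^{*}(b)$ in $X$ with $U\cdot V'\subseteq V$, which yields the set-theoretic inclusion $U\cdot f^{-1}[V']\subseteq f^{-1}[V]$ inside $G$. Since each of $U$, $f^{-1}[V']$, $f^{-1}[V]$ is open in $G$ and hence definable in our language (predicates for all open subsets of $G$ are present), elementarity transfers this inclusion to $U^{*}\cdot f^{-1}[V']^{*}\subseteq f^{-1}[V]^{*}$. But $b\in f^{-1}[V']^{*}$ (by the formula for $f^{*}$) and $ab^{-1}\in\mu\subseteq U^{*}$, so $a=(ab^{-1})b\in f^{-1}[V]^{*}$, as required.

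The remaining verifications are routine. Continuity of $\bar{f}$ follows because the $f^{*}$-preimage of any closed $F\subseteq X$ is $M$-type-definable and $E_{\mu}$-saturated, hence projects to a closed set in the logic topology on $G^{*}/E_{\mu}$. $G$-equivariance reduces to $f^{*}(ga)=g f^{*}(a)$ for $g\in G$, and both sides are $M$-definable extensions of the same map $h\mapsto f(gh)=gf(h)$ on $G$, so they coincide by the uniqueness clause of Lemma \ref{lem: extending f}; the basepoint is sent to $\bar{f}(e/E_{\mu})=f(e)=x_{0}$. Uniqueness of $\bar{f}$ as a continuous, $G$-equivariant, basepoint-preserving map follows from density of $\{g/E_{\mu}:g\in G\}$ in $G^{*}/E_{\mu}$ (the preimage of any nonempty open subset is a nonempty $\bigvee$-definable set over $M$, which meets $G$ by elementarity) together with Hausdorffness of $X$. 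I expect the one genuine obstacle to be the factoring step $\sim\,\subseteq E_{f}$: it is the single place where continuity of the $G$-action (rather than mere continuity of $f$) is needed, and it relies essentially on the strong form of $M$-definability in Lemma \ref{lem: extending f} — that fibers of $f^{*}$ are cut out by preimages of open subsets of $G$ — without which one cannot pass from the open-set inclusion in $G$ back to membership of $a$ in the fiber of $f^{*}$ over $f^{*}(b)$.
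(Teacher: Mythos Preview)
Your proof is correct and follows essentially the same strategy as the paper: extend the orbit map $g\mapsto gx_0$ via Lemma \ref{lem: extending f}, use continuity of the $G$-action to show the extension respects $\sim$, and conclude that it factors through $E_\mu$. The only differences are packaging: the paper invokes the explicit description $E_\mu=\,\sim\circ\equiv_M$ from Remark \ref{rem: E_mu as composition} and checks the two pieces separately (handling $\sim$ by a short contradiction argument), whereas you verify that the fiber relation $E_f$ is bounded and $M$-type-definable and then apply the defining minimality of $E_\mu$ directly, giving a direct rather than contrapositive argument for $\sim\,\subseteq E_f$ via the ``furthermore'' clause of Lemma \ref{lem: extending f}.
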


\begin{proof}
Let $(G,X,x_0)$ be a (topological) $G$-ambit. Then $f \colon G \to X$ given by $f(g)=gx_0$ is continuous (and so definable by Remark \ref{rem: continuous implies definable}). By Lemma \ref{lem: extending f}, $f$ extends uniquely to an $M$-definable function $f^* \colon G^* \to X$. 
We want to show that $f^*$ factors through $E_\mu$. For this, by Remark \ref{rem: E_mu as composition}, it is enough to show that each of the conditions  $a\sim b$ and $a \equiv_M b$ implies $f^*(a)=f^*(b)$. That $a \equiv_M b$ implies $f^*(a)=f^*(b)$ follows from the fact that the fibers of $f^*$ are invariant over $M$. Now, assume $a \sim b$. Suppose for a contradiction that $f^*(a) \ne f^*(b)$. By the explicit formula for $f^*$ and compactness of $X$, we get formulas $\varphi \in \tp(a/M)$ and $\psi \in \tp(b/M)$ such that $\cl (f[\varphi(M)]) \cap \cl (f[\psi(M)]) = \emptyset$. By compactness of $X$ and continuity of the action of $G$ on $X$, we conclude that there is an open neighborhood $U$ of $e$ such that $U\varphi(M)x_0 \cap \psi(M)x_0 =\emptyset$. Hence, $U\varphi(M) \cap \psi(M)=\emptyset$, which implies that $\psi(M)\varphi(M)^{-1} \cap U = \emptyset$, but this contradicts the fact that $ab^{-1} \in \psi(M^*)\varphi(M^*)^{-1} \cap \mu$.

We have shown that $f^*$ factors through $E_\mu$, inducing a continuous map $\bar f^* : G^*/E_\mu \to X$ which commutes with the maps from $G$.
Finally, for any $g \in G$, $a \in G^*$, and $p:=\tp(a/M)$, the computation in the last sentence of the proof of \cite[Proposition 3.8]{GPP} yields $gf^*(a)=g\bigcap_{\varphi(x) \in p} \cl(f[\varphi(M)]) = \bigcap_{\varphi(x) \in p} g\cl(f[\varphi(M)])= \bigcap_{\varphi(x) \in p} \cl(f[g\varphi(M)]) = \bigcap_{\psi(x) \in gp} \cl(f[\psi(M)])  =f^*(ga)$, which immediately implies that $g\bar f^*(a/E_\mu)=\bar f^*(ga/E_\mu)$. 
\end{proof}


The following corollary follows easily from Fact \ref{fac: universal G-ambit}. 

\begin{cor}\label{cor: independence of the language}
The relation $E_\mu$ (and so $G^*/E_\mu$ as well) does not depend on the choice of the language (assuming that predicates for all open subsets of $G$ are in the language).
\end{cor}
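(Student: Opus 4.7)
The plan is to derive this from the universal property in Fact~\ref{fac: universal G-ambit} together with the uniqueness, up to canonical isomorphism, of universal topological $G$-ambits. Fix two expansions $\mathcal{L}_1 \subseteq \mathcal{L}_2$ of the language, both containing predicates for all open subsets of $G$, and work inside a single sufficiently saturated and strongly homogeneous $\mathcal{L}_2$-structure $\mathfrak{C}$, whose $\mathcal{L}_1$-reduct is then an $\mathcal{L}_1$-monster. Let $E_\mu^i$ denote the relation $E_\mu$ computed in $\mathcal{L}_i$; both live on the common underlying set $G^* = G(\mathfrak{C})$.

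The first step is the easy inclusion $E_\mu^2 \subseteq E_\mu^1$: the relation $E_\mu^1$, being bounded, $M$-type-definable in $\mathcal{L}_1$, and containing $\sim$, remains a candidate equivalence relation in $\mathcal{L}_2$, while $E_\mu^2$ is by definition the finest such candidate. This inclusion yields a $G$-equivariant surjection $\bar\pi \colon G^*/E_\mu^2 \twoheadrightarrow G^*/E_\mu^1$ sending $e/E_\mu^2$ to $e/E_\mu^1$. It is continuous for the logic topologies; here I would invoke the paragraph in Section~\ref{section: preliminaries} guaranteeing that the logic topology on $G^*/E_\mu^1$ is the same whether computed in $\mathcal{L}_1$ or $\mathcal{L}_2$. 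Thus $\bar\pi$ is a morphism of topological $G$-ambits.

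The final step promotes $\bar\pi$ to an isomorphism. By Fact~\ref{fac: universal G-ambit}, each of $(G, G^*/E_\mu^1, e/E_\mu^1)$ and $(G, G^*/E_\mu^2, e/E_\mu^2)$ is a universal topological $G$-ambit, so universality of $G^*/E_\mu^1$ produces an ambit morphism $\phi \colon G^*/E_\mu^1 \to G^*/E_\mu^2$ in the reverse direction. The compositions $\phi \circ \bar\pi$ and $\bar\pi \circ \phi$ are ambit endomorphisms of the respective universal ambits; each agrees with the identity on the dense $G$-orbit of the distinguished point and is continuous, hence is the identity. Therefore $\bar\pi$ is a bijection, which forces $E_\mu^1 = E_\mu^2$ and identifies the two $G$-ambits canonically.

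The main obstacle I anticipate is essentially bookkeeping: arranging the two monster models so that $E_\mu^1$ and $E_\mu^2$ sit on a common set $G^*$, and checking that the logic topology is genuinely preserved under the change of language. Both points are handled by the compatibility statements recalled in Section~\ref{section: preliminaries}, after which the universal property does all the real work.
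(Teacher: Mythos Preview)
Your proof is correct and follows essentially the same approach as the paper, which simply says the corollary ``follows easily from Fact~\ref{fac: universal G-ambit}''; you have just spelled out the details of the standard uniqueness-of-universal-objects argument. One minor remark: you only treat the case $\mathcal{L}_1 \subseteq \mathcal{L}_2$, but two arbitrary languages (each containing predicates for all open sets) are compared via a common expansion, so this suffices.
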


Now, we give an example where the universal definable $G$-ambit is ``strictly bigger'' than the universal (topological) $G$-ambit.

\begin{ex}
Let $G$ be an infinite compact (Hausdorff) topological group. Then the universal $G$-ambit is just $(G,G,e)$. Consider $G$ as a group definable in the structure $M$ whose universe is $G$ equipped with the group operation and predicates for all subsets of $G$. Then, by \cite{GPP}, the universal definable $G$-ambit is the space $(G,S_G(M),\tp(e/M))$ which coincides with  $(G,\beta G,e)$ (where $\beta G$ is the Stone-$\check{\mbox{C}}$ech compactification of $G$ treated as a discrete group). Clearly, $\beta G$ is ``strictly bigger'' than $G$, as it has more elements. 
\end{ex}

By Fact \ref{fac: universal G-ambit}, we get a semigroup operation $*$ on $G^*/E_\mu$ given be
$$a/E_\mu * b/E_\mu = \lim_{g \to a/E_\mu} gb/E_\mu,$$
where the $g$'s in the limit are from $G$. Recall that $*$ is continuous on the left. We also get a natural action of $(G^*/E_\mu,*)$ on any $G$-ambit, and this action is also continuous on the left.

In order to use $*$ in model theory, we need to have  a description of $*$ in terms of realizations of types (as in the discrete case). This is done in the next proposition. 

\begin{prop}[Description of $*$ in $S_G^\mu(M)$]\label{prop: description of *}
For $p,q \in S_G(M)$, 
$$(\mu \cdot p) * (\mu \cdot q) = \mu \cdot \tp(ab/M),$$ 
where $a \models p$, $b \models q$ and $\tp(a/M,b)$ is finitely satisfiable in $M$. 
\end{prop}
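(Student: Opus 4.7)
The plan is to identify both sides inside the compact Hausdorff space $G^*/E_\mu$ (identified with $S_G^\mu(M)$) by taking the same limit along a well-chosen net. Recall that the semigroup operation is characterized by $a/E_\mu * b/E_\mu = \lim_{g \to a/E_\mu,\ g \in G} gb/E_\mu$, that $*$ is continuous in the first variable, and that it extends the ambient $G$-action on the ambit, i.e.\ $g/E_\mu * y = g \cdot y$ for $g \in G$ and $y \in G^*/E_\mu$; in particular $g/E_\mu * b/E_\mu = (gb)/E_\mu$. Since the element $\mu \cdot \tp(ab/M) \in S_G^\mu(M)$ corresponds to $(ab)/E_\mu \in G^*/E_\mu$ under the identification, it suffices to show $a/E_\mu * b/E_\mu = (ab)/E_\mu$.

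First I would use finite satisfiability of $\tp(a/Mb)$ in $M$ to produce a net $(g_i)_{i \in I}$ in $G = G(M)$ that converges to $\tp(a/Mb)$ in $S_1(Mb)$: index by finite subsets of $\tp(a/Mb)$ ordered by inclusion, and for each such $\Delta$ pick a realizer $g_\Delta \in G$ of $\bigwedge \Delta$. Projecting to $S_G(M)$ gives $\tp(g_i/M) \to p$, and composing with the continuous quotient map $S_G(M) \twoheadrightarrow S_G^\mu(M) \cong G^*/E_\mu$ yields $g_i/E_\mu \to a/E_\mu$.

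Next I would show $(g_i b)/E_\mu \to (ab)/E_\mu$. For any $M$-formula $\chi(y)$ with $\models \chi(ab)$, the $Mb$-formula $\chi(x \cdot b)$ belongs to $\tp(a/Mb)$, and hence is eventually realized by $g_i$, so $\chi(g_i b)$ holds eventually; thus $\tp(g_i b/M) \to \tp(ab/M)$ in $S_G(M)$, and continuity of the quotient again gives $(g_i b)/E_\mu \to (ab)/E_\mu$. On the other hand, by left-continuity of $*$, the convergence $g_i/E_\mu \to a/E_\mu$ yields $(g_i b)/E_\mu = g_i/E_\mu * b/E_\mu \to a/E_\mu * b/E_\mu$. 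Since $G^*/E_\mu$ is Hausdorff, the two limits must coincide, which gives $a/E_\mu * b/E_\mu = (ab)/E_\mu$, as required.

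The only step requiring a little care is the continuity of the quotient map $S_G(M) \to S_G^\mu(M) \cong G^*/E_\mu$; this is immediate from the definition of the quotient topology, and is equivalent to the observation that a closed set in the logic topology on $G^*/E_\mu$ pulls back to a partial type over $M$ whose set of complete extensions is closed in $S_G(M)$. Once that, together with the agreement of $*$ with the $G$-action on the dense sub-semigroup $\{g/E_\mu : g \in G\}$, is recorded, the argument is a short net chase and there is no genuine obstacle.
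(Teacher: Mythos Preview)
Your proof is correct and follows essentially the same approach as the paper: build a net in $G$ from finite satisfiability of $\tp(a/Mb)$, show it converges to $\mu\cdot p$ while the translates of $\mu\cdot q$ converge to $\mu\cdot\tp(ab/M)$, and conclude by left-continuity of $*$ and Hausdorffness. The only cosmetic difference is that you index your net by finite subsets of $\tp(a/Mb)$ and route the convergence through the continuous quotient $S_G(M)\to S_G^\mu(M)$, whereas the paper indexes by pairs $(\varphi,\delta)$ (a basic neighborhood of $\mu\cdot\tp(ab/M)$ and a formula in $p$) and checks membership in $U_\varphi^\mu$ by hand; your packaging is slightly cleaner but the content is the same.
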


\begin{proof}
Consider any basic open neighborhood of $\mu \cdot \tp(ab/M)$, i.e. a set of the form $U_\varphi^\mu:=\{\mu \cdot r \in S_G^\mu(M): \mu \cdot r \vdash \varphi\}$ for some formula $\varphi$ with parameters from $M$ such that $\mu \cdot \tp(ab/M) \in U_\varphi^\mu$. Then $\mu \cdot \tp(ab/M) \vdash \varphi$, so there are formulas $\theta \in \mu$ and $\psi \in \tp(ab/M)$ implying the formula defining $G$ and such that $(\theta \cdot \psi)(x) \vdash \varphi(x)$. Therefore,
$$\models (\theta(v) \wedge \psi(wb)) \rightarrow \varphi(vwb).$$

Consider any formula $\delta(w) \in p$ implying the formula defining $G$. Since $\delta(w) \wedge \psi(wb) \in \tp(a/bM)$, and the last type is finitely satisfiable in $M$, there is $g_{\varphi,\delta} \in G$ for which 
$$\models \delta(g_{\varphi,\delta}) \wedge \psi(g_{\varphi,\delta}b).$$

We conclude that $\models \theta(v) \rightarrow \varphi(vg_{\varphi,\delta}b)$, so 
$$g_{\varphi,\delta} (\mu \cdot q)=g_{\varphi,\delta}(\mu \cdot \tp(b/M))=\mu \cdot \tp(g_{\varphi,\delta}b/M) \in U_\varphi^\mu.$$
Hence, $$\lim_{\varphi,\delta} g_{\varphi,\delta} (\mu \cdot q)=\mu \cdot \tp(ab/M),$$ where the limit is taken with respect to the obvious directed set consisting of pairs of formulas $\varphi$ and $\delta$ as above (i.e. $\varphi$'s are such that $\mu \cdot \tp(ab/M) \in U_\varphi^\mu$ and $\delta \in p$). On the other hand, since $\models \delta(g_{\varphi,\delta})$, we have that 
$$\lim_{\varphi,\delta} \mu \cdot \tp(g_{\varphi,\delta}/M) = \mu \cdot p,$$
so $\lim_{\varphi,\delta} g_{\varphi,\delta} (\mu \cdot q)=(\mu \cdot p)*(\mu \cdot q)$. Therefore, $(\mu \cdot p) * (\mu \cdot q) = \mu \cdot \tp(ab/M)$.
\end{proof}

Recall that for  $A\subseteq \C$, ${G^*}^{000}_A$ denotes the smallest $A$-invariant, bounded index subgroup of $G^*$.
Now, we define a topological variant of this component.

\begin{dfn}
We define ${G^*}^{000}_{\topo}$ to be the smallest normal, bounded index, invariant over $M$ subgroup of $G^*$ which contains $\mu$.
\end{dfn}


This definition is in the spirit of Proposition \ref{prop: characterization of G^00_top}. However, there is a delicate issue here. Namely, it is not clear to us whether we can drop the normality assumption in this definition.

\begin{ques}
Is the smallest bounded index, invariant over $M$ subgroup of $G^*$ which contains $\mu$ normal in $G^*$? 
\end{ques}


If we knew that the word ``normal'' can be removed from the definition, we would immediately get ${G^*}^{000}_{\topo}=\mu \cdot {G^*}^{000}_M$. From the current definition,  we get the following, a bit more complicated description, which however is good enough for further applications.

\begin{rem}\label{rem: description of G^000}
${G^*}^{000}_{\topo}=\langle \mu^{G^*} \rangle \cdot {G^*}^{000}_M$, where $\langle \mu^{G^*} \rangle$ denotes the normal closure of $\mu$.
\end{rem}

Let $\hat{f} \colon S_G^\mu(M) \to G^*/{G^*}^{000}_{\topo}$ be defined by 
$$\hat{f}(\mu \cdot \tp(a/M))=a/{G^*}^{000}_{\topo}.$$ 
By the last remark and the fact that $a\equiv_M b$ implies $a^{-1}b \in {G^*}^{000}_{M}$, this is a well-defined function. (We used the symbol $\hat{f}$, so as to be consistent with notation from \cite{KrPi}.) 

In the next proposition,  we give a ``universal'' description of $G^*/{G^*}^{000}_{\topo}$ in the spirit of \cite[Proposition 1.20]{KrPi}, which will be used in Section \ref{section:amenability of Aut(M)}.  To prove it, we will apply Remark \ref{rem: description of G^000} and the arguments from the proof of \cite[Proposition 1.20]{KrPi}.

\begin{prop}\label{prop: universal description}
$\hat{f}$ is the (unique) initial object in the category of all maps $f \colon S_G^\mu(M) \to H$, where $H$ is a group, such that for all $p,q \in S_G(M)$, $f(\mu \cdot p)f(\mu \cdot q)$ is equal to the common value $f(\mu \cdot r)$ for all $r \in (\mu \cdot p) \cdot (\mu \cdot q)$, where the morphisms between two such maps $f_1 \colon S_G^\mu(M) \to H_1$ and $f_2 \colon S_G^\mu(M) \to H_2$ are homomorphisms from $H_1$ to $H_2$ commuting with $f_1$ and $f_2$.
\end{prop}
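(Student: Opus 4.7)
Following the template of \cite[Proposition 1.20]{KrPi} — with Remark~\ref{rem: description of G^000} in place of the usual description of ${G^*}^{000}_M$ — I would first check that $\hat{f}$ itself belongs to the category. Given $p,q \in S_G(M)$ and an arbitrary $r \in (\mu\cdot p)\cdot(\mu\cdot q)$, write $r = \mu_1 a \mu_2 b$ with $\mu_1,\mu_2\in\mu$, $a\models p$ and $b\models q$. Since $\mu \subseteq {G^*}^{000}_{\topo}$ and ${G^*}^{000}_{\topo}$ is normal (so $a\mu_2 a^{-1}\in {G^*}^{000}_{\topo}$), we get $r\in {G^*}^{000}_{\topo}\cdot ab$; hence $\hat{f}(\mu\cdot \tp(r/M)) = ab/{G^*}^{000}_{\topo} = \hat{f}(\mu\cdot p)\hat{f}(\mu\cdot q)$, the required common value.

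For initiality, given an arbitrary object $f\colon S_G^\mu(M)\to H$ of the category, I would define $F\colon G^*\to H$ by $F(a):=f(\mu\cdot\tp(a/M))$ and prove that $F$ is a group homomorphism whose kernel contains ${G^*}^{000}_{\topo}$. The homomorphism property follows by applying the defining property of $f$ to $p=\tp(a/M)$, $q=\tp(b/M)$ and $r=ab\in(\mu\cdot p)(\mu\cdot q)$ (witnessed by $\mu_1=\mu_2=e$). The kernel $\ker F$ is automatically normal and $M$-invariant, and has bounded index since $F$ factors through $S_G^\mu(M)$, whose cardinality is bounded; in particular ${G^*}^{000}_M\subseteq \ker F$.

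The only genuinely delicate step is the inclusion $\mu\subseteq \ker F$. For $m\in\mu$ I would set $p=\tp(m/M)$, $q=\tp(m^{-1}/M)$ and observe that both $e=m\cdot m^{-1}$ and $m=(e\cdot m)(m\cdot m^{-1})$ lie in $(\mu\cdot p)(\mu\cdot q)$; for the second product, the right factor $m\cdot m^{-1}=e$ belongs to $\mu\cdot q$ by using $m\in\mu$ as the $\mu$-component and $m^{-1}\models q$ as the realization. The common-value property then forces $f(\mu\cdot\tp(m/M)) = f(\mu\cdot p)f(\mu\cdot q) = f(\mu\cdot\tp(e/M))$, and a separate application with $p=q=\tp(e/M)$ and $r=e$ gives $f(\mu\cdot\tp(e/M))^2 = f(\mu\cdot\tp(e/M))$, hence $f(\mu\cdot\tp(e/M))=e_H$. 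Thus $F(m)=e_H$.

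Combining $\mu\subseteq \ker F$ with normality of $\ker F$ yields $\langle\mu^{G^*}\rangle\subseteq\ker F$, and together with ${G^*}^{000}_M\subseteq\ker F$ and Remark~\ref{rem: description of G^000} one concludes ${G^*}^{000}_{\topo}\subseteq \ker F$. Then $F$ descends to a group homomorphism $\bar F\colon G^*/{G^*}^{000}_{\topo}\to H$ with $\bar F\circ\hat{f} = f$, and since $\hat{f}$ is surjective, $\bar F$ is the unique such morphism, establishing initiality.
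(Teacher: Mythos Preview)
Your argument is correct and follows essentially the same route as the paper's: both verify that $\hat f$ lies in the category via the same computation, and both establish initiality by showing (using Remark~\ref{rem: description of G^000}) that the map $a\mapsto f(\mu\cdot\tp(a/M))$ is a homomorphism whose kernel contains $\langle\mu^{G^*}\rangle\cdot{G^*}^{000}_M$. Your packaging---defining $F$ first and then analyzing $\ker F$---is a mild reorganization of the paper's more direct computations, not a different strategy.

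Two minor remarks. First, your verification of $\mu\subseteq\ker F$ is more elaborate than necessary: for $m\in\mu$ one already has $\mu\cdot\tp(m/M)=\mu\cdot\tp(e/M)$ as elements of $S_G^\mu(M)$ (since $m\cdot e^{-1}=m\in\mu$ and $\mu$ is $\emptyset$-type-definable), so $F(m)=f(\mu\cdot\tp(e/M))=e_H$ immediately, without the two-products trick. This is what the paper uses implicitly when it writes $f(\mu\cdot\tp(a'/M))=f(\mu)$ for $a'\in\mu$. Second, in your first paragraph you write ``$r=\mu_1 a\mu_2 b$'' and ``$\hat f(\mu\cdot\tp(r/M))$'', but $r$ is already a complete type over $M$; you mean that some realization of $r$ has the form $\mu_1 a\mu_2 b$, and the value in question is $\hat f(\mu\cdot r)$.
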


\begin{proof}
Uniqueness of an initial object is a general fact. 
To see that $\hat{f}$ belongs to the described category, consider any $p,q \in S_G(M)$ and any $r \in (\mu \cdot p) \cdot (\mu \cdot q)$. Then there are $a\models p' \in \mu \cdot p$ and $b\models q' \in \mu \cdot q$ such that $r=\tp(ab/M)$. Then 
$$\hat{f}(\mu \cdot p) \hat{f}(\mu \cdot q)=\hat{f}(\mu \cdot p')\hat{f}(\mu \cdot q')=(a/{G^*}^{000}_{\topo}) (b/{G^*}^{000}_{\topo})=ab/{G^*}^{000}_{\topo}=\hat{f}(\mu \cdot r).$$ 

Let us show now the universal property of $\hat{f}$.
Consider any $f\colon S^\mu_G(M) \to H$ as in the proposition. Since $\tp(e_G/M) \in (\mu \cdot \tp(e_G/M) )\cdot (\mu \cdot \tp(e_G/M))$, we have $f(\mu \cdot \tp(e_G/M))=f(\mu \cdot \tp(e_G/M)) f(\mu \cdot \tp(e_G/M))$, and so $f(\mu)=f(\mu \cdot \tp(e_G/M))=e_H$. So, if for some $a,b \in G^*$ we have $\tp(a/M)=\tp(b/M)$, then $f(\mu \cdot \tp(b^{-1}a/M))=f(\mu \cdot \tp(b^{-1}/M)) \cdot f(\mu \cdot \tp(a/M))=f(\mu \cdot \tp(b^{-1}b/M))=e_H$; in particular, $f(\mu\cdot p^{-1})=f(\mu \cdot p)^{-1}$ for all $p \in S_G(M)$, where $p^{-1}=\tp(a^{-1}/M)$ for any $a \models p$. 
Hence, for any $a \in \mu^{G^*}$ we can write $a=g^{-1}a'g$ for some $g \in G^*$ and $a' \in \mu$ and we get $f(\mu \cdot \tp(a/M))=f(\mu \cdot \tp(g^{-1}/M)) f(\mu \cdot \tp(a'/M)) f(\mu \cdot \tp(g/M))= f(\mu \cdot \tp(g/M))^{-1} f(\mu) f(\mu \cdot \tp(g/M))=e_H$.

In order to finish the proof, it is enough to show that $h\colon {G^*}/{G^*}^{000}_{\topo} \to H$ given by $h(a/{G^*}^{000}_{\topo})=f(\mu \cdot \tp(a/M))$ is a well-defined homomorphism (it is enough, because $h$ clearly commutes with $\hat{f}$ and $f$, and $\hat{f}$ is surjective). So, take $a,b \in G^*$ such that $a{G^*}^{000}_{\topo}=b{G^*}^{000}_{\topo}$. By Remark \ref{rem: description of G^000} and the well-known description (\cite[Lemma 2.2(2)]{Gis-1}) of ${G^*}^{000}_M$ as the collection of products of elements of the form $x^{-1}y$ for some $x \equiv_M y$, we get that $b^{-1}a=a_1 \cdot \ldots \cdot a_k \cdot \beta_1^{-1}\alpha_1 \cdot \ldots \cdot \beta_n^{-1}\alpha_n$ for some $a_i, \alpha_j,\beta_j \in G^*$ such that $a_i \in \mu ^{G^*}$ and $\tp(\alpha_j/M)=\tp(\beta_j/M)$ for all $i,j$. Therefore, by the last paragraph and the property of $f$, it follows that $f(\mu \cdot \tp(a/M))=f(\mu \cdot \tp(b/M))$, so we have proved that $h$ is well-defined. The fact that it is a homomorphism follows from the property of $f$.
\end{proof}

Note that equivalently one can say that the map from $S_G(M)$ to $G^*/{G^*}^{000}_{\topo}$ given by $\tp(a/M) \mapsto a/{G^*}^{000}_{\topo}$ is an initial object in the category of all maps $f \colon S_G(M) \to H$, where $H$ is a group, such that $f$ is induced by a homomorphism from $G^*$ to $H$ which is  trivial on $\mu$.

Note also that, using Proposition \ref{prop: universal description}, Corollary \ref{cor: independence of the language} and the natural identification of $G^*/E_\mu$ with $S_G^\mu(M)$, we get:

\begin{cor}\label{cor: independence of the language of G000}
${G^*}^{000}_{\topo}$ and $G^*/{G^*}^{000}_{\topo}$ (with the logic topology) do not depend on the choice of the language (assuming that predicates for all open subsets of $G$ are in the language).
\end{cor}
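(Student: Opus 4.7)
The plan is to first show that the subgroup ${G^*}^{000}_{\topo}$ itself (not merely the quotient topological group) does not depend on the language, and then to invoke the general remark from Section \ref{section: preliminaries} to deduce language-independence of the logic topology. The two main tools will be Proposition \ref{prop: universal description} and Corollary \ref{cor: independence of the language}.

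First I would reduce to the case of an inclusion $\mathcal{L}\subseteq\mathcal{L}'$, by comparing two arbitrary languages (both containing predicates for all open subsets of $G$) to their union. In that setup, write $\hat f_{\mathcal L}$ and $\hat f_{\mathcal L'}$ for the two universal maps. The key observation is that the category appearing in Proposition \ref{prop: universal description} is described using only the space $S_G^\mu(M)$, the subset-product operation $(\mu\cdot p)\cdot(\mu\cdot q)$ on its classes, and plain group homomorphisms. Since $S_G^\mu(M)$ is identified with $G^*/E_\mu$ and is therefore the same in $\mathcal L$ and $\mathcal L'$ (by Corollary \ref{cor: independence of the language}), and since the subset-product on subsets of $G^*$ is a purely set-theoretic operation, the two maps $\hat f_{\mathcal L}$ and $\hat f_{\mathcal L'}$ are initial objects in literally the same category. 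This yields a unique group isomorphism $\phi\colon G^*/{G^*}^{000}_{\topo,\mathcal L}\to G^*/{G^*}^{000}_{\topo,\mathcal L'}$ with $\phi\circ\hat f_{\mathcal L}=\hat f_{\mathcal L'}$.

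Next I would unwind the definitions of $\hat f_{\mathcal L}$ and $\hat f_{\mathcal L'}$, together with the canonical identification $S_G^\mu(M)_{\mathcal L}\cong S_G^\mu(M)_{\mathcal L'}$ via $a/E_\mu\mapsto a/E_\mu$. A short computation should then show that $\phi$ necessarily sends $a\cdot{G^*}^{000}_{\topo,\mathcal L}$ to $a\cdot{G^*}^{000}_{\topo,\mathcal L'}$ for every $a\in G^*$. Well-definedness of this assignment will force ${G^*}^{000}_{\topo,\mathcal L}\subseteq{G^*}^{000}_{\topo,\mathcal L'}$, while injectivity of $\phi$ will force the reverse inclusion; so ${G^*}^{000}_{\topo}$ is really one and the same subgroup of $G^*$ in either language.

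Once equality of the subgroup is established, the topological assertion is immediate: the associated quotient equivalence relation on $G^*$ is bounded and $M$-invariant in $\mathcal L$, so by the observation recalled in Section \ref{section: preliminaries} on language-independence of the logic topology for bounded $M$-invariant equivalence relations, the logic topology on the quotient computed in $\mathcal L$ agrees with the one computed in $\mathcal L'$. The only technical point I expect to need care is the check that the subset-product operation in Proposition \ref{prop: universal description} really is language-independent under the identification of the $S_G^\mu(M)$'s; the cleanest route will be to recognize it as the Ellis-type semigroup operation $*$ of Proposition \ref{prop: description of *}, which is determined purely by the $G$-ambit structure and is therefore language-independent by Corollary \ref{cor: independence of the language}.
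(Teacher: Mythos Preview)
Your approach is essentially the paper's own: use Proposition \ref{prop: universal description} together with Corollary \ref{cor: independence of the language} (and the identification $S_G^\mu(M)\cong G^*/E_\mu$) to see that $\hat f$ is the initial object of a category that does not depend on the language, and then conclude. The only quibble is your final ``cleanest route'': the subset-product $(\mu\cdot p)\cdot(\mu\cdot q)$ in Proposition \ref{prop: universal description} is \emph{not} the Ellis operation $*$ of Proposition \ref{prop: description of *} (the former is a subset of $G^*$ and the condition ranges over all $r$ landing in it, while the latter singles out one element), so that identification would not work. The simpler fix is the one you already sketched earlier in your proposal: each $\mu\cdot p$ is an $E_\mu$-class, hence a fixed subset of $G^*$ independent of the language, and the product of two subsets of $G^*$ is purely set-theoretic; since the product set is $M$-type-definable, the condition ``$r\in(\mu\cdot p)\cdot(\mu\cdot q)$'' amounts to ``the $E_\mu$-class of $r$ meets $(\mu\cdot p)\cdot(\mu\cdot q)$'', which is again language-independent.
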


Using Proposition \ref{prop: characterization of G^00_top} and the definition of ${G^*}^{000}_{\topo}$, one easily checks that the quotients $G^*/{G^*}^{00}_{\topo}$ and $G^*/{G^*}^{000}_{\topo}$ do not depend as topological groups (with the logic topology) on the choice of the monster model.

In Example \ref{ex: 000}, using \cite[Corollary 0.5]{KrPi}, Corollary \ref{main corollary 3} and Example \ref{ex: Anand 1}, we will see that for $G$ and $M$ from Example \ref{ex: Anand 1}  we also have ${G^*}^{000}_M\subsetneq {G^*}^{000}_{\topo}$.


As in the case of $G^*/{G^*}^{00}_{M}$ and $G^*/{G^*}^{000}_{M}$, while $G^*/{G^*}^{00}_{\topo}$ with the logic topology is compact (we have seen in Proposition \ref{fac: from GPP} that it is even the Bohr compactification of $G$), $G^*/{G^*}^{000}_{\topo}$ is only quasi-compact (so not necessarily Hausdorff). In fact, using Proposition \ref{prop: characterization of G^00_top} and the fact that $\cl (e/{G^*}^{000}_{\topo})$ is a subgroup of $G^*/{G^*}^{000}_{\topo}$, we easily get   

\begin{rem}
$\cl (e/{G^*}^{000}_{\topo}) = {G^*}^{00}_{\topo}/{G^*}^{000}_{\topo}$.
\end{rem}

\begin{proof}
$\cl(e/{G^{*}}^{000}_{\topo})$ is by definition of the (logic) topology on $G^{*}/{G^{*}}^{000}_{\topo}$ the smallest subset of $G^{*}/{G^{*}}^{000}_{\topo}$ which contains the identity and whose preimage in $G^{*}$ is type-definable over $M$. But it is also a group, so by Proposition 2.5, the preimage is seen to be ${G^{*}}^{00}_{\topo}$ which suffices. 
\end{proof}

\noindent
Hence, the logic topology on ${G^*}^{00}_{\topo}/{G^*}^{000}_{\topo}$ is trivial, and so rather useless. Thus, a question arises, how to treat $G^*/{G^*}^{000}_{\topo}$ and ${G^*}^{00}_{\topo}/{G^*}^{000}_{\topo}$ as mathematical objects and how to measure their complexity. A possible answer in the context of $G^*/{G^*}^{000}_{M}$ was given in \cite{KrPi} (with further applications to Borel cardinalities in \cite{KrPiRz}), and below we note that the arguments from \cite{KrPi} go through also in our topological context. We do not repeat the proofs, as they are almost the same as in \cite{KrPi}. The material contained in the rest of this subsection will not be used in further sections, so the reader may skip it with no harm, but we should mention here that  Corollary \ref{main corollary 3} confirms Conjecture \ref{con: topological version of the conjecture from KrPi} under the stronger assumption of strong amenability. If the reader is interested in details standing behind the remaining part of Subsection \ref{subsection: topological category}, he or she can either go through the proofs in \cite{KrPi}, or consult Section 6.4 of the very recently written Ph.D. thesis \cite{RzePhD}, where it is briefly explained how Theorems \ref{main theorem 1} and \ref{main theorem 2} follow from a similar, but much more general, abstract result (which is fully proved in \cite{RzePhD}).


The next remark follows easily from Proposition \ref{prop: description of *}.

\begin{rem}
The function $\hat{f}$ defined after Remark \ref{rem: description of G^000} is a semigroup epimorphism.
\end{rem}

Let ${\mathcal M}$ be a minimal left ideal in the semigroup $S_G^\mu(M)$, and let $u \in \mathcal M$ be an idempotent. Then $u{\mathcal M}$ is a group called the {\em Ellis group} (of the universal ambit $S_G^\mu(M)$). Let $f=\hat{f}|_{u{\mathcal M}}: u{\mathcal M} \to G^*/{G^*}^{000}_{\topo}$. The last remark implies

\begin{rem}
$f$ is a group epimorphism.
\end{rem}

In Chapter IX of \cite{Gl} on $\tau$-topologies and the description of the generalized Bohr compactification, it is assumed ``for convenience'' that the acting group is discrete. One can check that all the material works also in the case of topological groups, after noticing the following items:

\begin{itemize}
\item $2^{S_G^\mu(M)}$ is a $G$-flow (i.e. the action of $G$ is continuous), which is easy,
\item  products of $G$-flows are $G$-flows (which is obvious),
\item quotients of $G$-flows by closed, $G$-invariant equivalence relations are $G$-flows (this is easy, see e.g. the proof of \cite[Proposition 3.7]{Kr_Polish}).
\end{itemize}
So we have a topology, called the {\em $\tau$-topology}, on $u{\mathcal M}$ which is quasi-compact, $T_1$ and such that the group operation is separately continuous. We define $H(u{\mathcal M})$ as the intersection of the $\tau$-closures of the $\tau$-neighborhoods of the identity in $u{\mathcal M}$. Then $u{\mathcal M}/H(u{\mathcal M})$ is a compact topological (Hausdorff) group (with the quotient topology induced from the $\tau$-topology). Moreover, it is the {\em generalized Bohr compactification} of $G$ in the sense of \cite{Gl}. 

Let us emphasize that we claim that Chapter IX of Glasner's book works in the topological context. In contrast, in Section 2 of our previous paper \cite{KrPi}, we had to find some new arguments in the externally definable case, since we did not know whether the $G$-flow $2^{S_{G,\ext}(M)}$ was externally definable (where $S_{G,\ext}(M)$ is the space of external types over $M$).  The argument from Section 2 of \cite{KrPi} also works in the topological case, because  the only properties that we need for that are that products of $G$-flows are $G$-flows and quotients of $G$-flows by closed, $G$-invariant equivalence relations are $G$-flows.

Recall that ${G^*}^{000}_M$ can be written as the increasing union $\bigcup_{n \in \omega} F_n$, where $F_n$ is the $M$-type-definable set consisting of products of $n$ elements of the form $b^{-1}a$ where $(a,b)$ extends to an $M$-indiscernible sequence. From Remark \ref{rem: description of G^000}, we get the following description of ${G^*}^{000}_{\topo}$.

\begin{rem}
${G^*}^{000}_{\topo}$ can be written as the increasing union $\bigcup_{n \in \omega} E_n$, where $E_n$ consists of products $xy$, where $x$ is a product of $n$ conjugates of elements of $\mu$ and $y$ is a product of $n$ elements of the form $b^{-1}a$ where $(a,b)$ extends to an infinite $M$-indiscernible sequence. 
\end{rem}

We clearly have $F_n\cdot F_m=F_{n+m}$ and $E_m \cdot E_n=E_{m+n}$. Thus, working with $S_G^\mu(M)$ in place of $S_{G,M}(N)$ (where $N$ is $|M|^+$-saturated and $S_{G,M}(N)$ is the space of complete types over $N$ concentrating on $G$ which are finitely satisfiable in $M$, i.e. the space of external types over $M$) and using $E_n$ in place of $F_n$, one can easily adapt the proof of Theorem 0.1 from \cite{KrPi} to get the following topological variant of this theorem.

\begin{thm}\label{main theorem 1}
Equip $u{\mathcal M}$ with the $\tau$-topology and $u{\mathcal M}/H(u{\mathcal M})$ -- with the induced quotient topology. Then:
\begin{enumerate}
\item $f$ is continuous,
\item $H(u{\mathcal M}) \leq \ker(f)$,
\item the formula $p/H(u{\mathcal M}) \mapsto f(p)$ yields a well-defined continuous epimorphism $\bar f$ from $u{\mathcal M}/H(u{\mathcal M})$ to $G^*/{G^*}^{000}_{\topo}$.
\end{enumerate}
In particular, we get the following sequence of continuous epimorphisms:
\begin{equation}
u{\mathcal M}\twoheadrightarrow u{\mathcal M}/H(u{\mathcal M})\xtwoheadrightarrow{\bar f}{}{G^*}/{G^*}^{000}_{\topo}\twoheadrightarrow {G^*}/{G^*}^{00}_{\topo},
\end{equation}
where $u{\mathcal M}/H(u{\mathcal M})$ is the generalized Bohr compactification of $G$.
\end{thm}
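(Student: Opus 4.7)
The plan is to transplant the proof of \cite[Theorem 0.1]{KrPi} into the topological setting, using the parallels laid out just before the statement: $S_G^\mu(M)$ plays the role of $S_{G,\ext}(M)$, the semigroup epimorphism $\hat f$ from the remark above plays the role of its externally definable analogue, and the family $(E_n)_{n\in\omega}$ replaces the family $(F_n)_{n\in\omega}$. The two essential technical facts that make the transfer legitimate are that each $E_n$ is $M$-type-definable and $E_m\cdot E_n=E_{m+n}$ (exactly as for $F_n$), together with the observation, already recorded in this subsection, that Chapter IX of \cite{Gl} applies to topological $G$. Thus the description of $u\mathcal M/H(u\mathcal M)$ as the generalized Bohr compactification of $G$, together with the separate continuity of multiplication in the $\tau$-topology, is available in the same form as in \cite{KrPi}.

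I would tackle (1) first, since it is the heart of the matter. By the definition of the logic topology on $G^*/{G^*}^{000}_{\topo}$, a basic closed subset has the form $\pi[D]$ where $D\subseteq G^*$ is $M$-type-definable and satisfies $D={G^*}^{000}_{\topo}\cdot D$; hence $D=\bigcup_n E_n\cdot D$ and it suffices to show that $f^{-1}[\pi[E_n\cdot D]]\cap u\mathcal M$ is $\tau$-closed for each $n$. Unwinding $\hat f(\mu\cdot\tp(a/M))=a/{G^*}^{000}_{\topo}$, this set equals the collection of $p\in u\mathcal M$ realized by some $a\in E_n\cdot D$, which is closed in the quotient topology of $S_G^\mu(M)$ because $E_n\cdot D$ is type-definable. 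To promote ``closed in the quotient topology'' to ``$\tau$-closed in $u\mathcal M$'', I would run the same induction on $n$ as in \cite{KrPi}, using the explicit description of $\tau$-closures (via nets $g_ia_i$ with $g_i\to u$ in $S_G^\mu(M)$) together with the continuity of the $G$-action from Fact \ref{fct: continuity of the ambit} and the semigroup identity from Proposition \ref{prop: description of *}.

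With (1) in hand, $\ker(f)$ is a $\tau$-closed normal subgroup of $u\mathcal M$ and the induced map $u\mathcal M/\ker(f)\to G^*/{G^*}^{000}_{\topo}$ is a continuous injective homomorphism into a quasi-compact $T_1$ group with separately continuous multiplication. The argument of \cite{KrPi} shows that any such $\tau$-closed normal subgroup contains $H(u\mathcal M)$, which gives (2); the non-Hausdorffness of $G^*/{G^*}^{000}_{\topo}$ causes no trouble because $T_1$ plus separate continuity is what the $\tau$-topological argument actually uses. Part (3) is then formal: $\bar f$ is well-defined by (2), continuous because $u\mathcal M/H(u\mathcal M)$ carries the quotient $\tau$-topology, and surjective because $\hat f$, and therefore $f=\hat f\restr u\mathcal M$, is a semigroup epimorphism.

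I expect the only place where genuine care is required is the inductive continuity argument in the second paragraph: the $\tau$-topology is delicate, multiplication is only separately continuous, and the type-definability of $E_n$ must be threaded through each step of the closure computation. Nevertheless, because the formal properties of $E_n$ mirror those of $F_n$ exactly, no new combinatorial input beyond what is already in \cite{KrPi} should be needed, and the rest of the proof is a bookkeeping exercise.
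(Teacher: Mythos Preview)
Your proposal is correct and matches the paper's approach exactly: the paper's entire ``proof'' is the remark preceding the theorem, namely that one adapts \cite[Theorem 0.1]{KrPi} with $S_G^\mu(M)$ replacing $S_{G,\ext}(M)$ and the $E_n$'s replacing the $F_n$'s, which is precisely what you outline.

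One small correction to your justification for (2): the group $G^*/{G^*}^{000}_{\topo}$ is \emph{not} $T_1$ in general---the paper records just above that $\cl(e/{G^*}^{000}_{\topo})={G^*}^{00}_{\topo}/{G^*}^{000}_{\topo}$, which is typically nontrivial---so the sentence ``$T_1$ plus separate continuity is what the $\tau$-topological argument actually uses'' misidentifies the reason the step goes through. The argument in \cite{KrPi} for $H(u\mathcal M)\leq\ker(f)$ does not rely on any separation axiom on the target; it works directly with the $\tau$-topology on $u\mathcal M$ and the type-definability of the $E_n$'s, and therefore transfers verbatim once (1) is in hand.
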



The proof of Theorem 0.2 from \cite{KrPi} also goes through without much change.

\begin{thm}\label{main theorem 2}
The group ${G^*}^{00}_{\topo}/{G^*}^{000}_{\topo}$ is isomorphic to the quotient of a compact (Hausdorff) group by a dense subgroup. More precisely, for $Y:=\ker(\bar f)$ let $\cl_\tau(Y)$ be its closure inside $u{\mathcal M}/H(u{\mathcal M})$. Then $\bar f$ restricted to $\cl_\tau(Y)$ induces an isomorphism between $\cl_\tau(Y)/Y$ (the quotient of a compact group by a dense subgroup) and ${G^*}^{00}_{\topo}/{G^*}^{000}_{\topo}$.
\end{thm}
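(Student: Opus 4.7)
The plan is to follow the proof of \cite[Theorem 0.2]{KrPi} essentially verbatim, replacing the filtration $\{F_n\}$ of ${G^*}^{000}_M$ by the filtration $\{E_n\}$ of ${G^*}^{000}_{\topo}$ recorded in the remark above, and the space of external types by $S_G^\mu(M)$.

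First I would set up the algebra and dispatch the easy inclusion. By Theorem \ref{main theorem 1}, the map $\bar f \colon u{\mathcal M}/H(u{\mathcal M}) \to G^*/{G^*}^{000}_{\topo}$ is a continuous group epimorphism from a compact (Hausdorff) topological group. Its kernel $Y$ is a (typically non-closed) normal subgroup, so, since the group operations on $u{\mathcal M}/H(u{\mathcal M})$ are continuous, $\cl_\tau(Y)$ is a closed, hence compact, normal subgroup which contains $Y$ as a dense subgroup. Using continuity of $\bar f$ together with the remark $\cl(e/{G^*}^{000}_{\topo}) = {G^*}^{00}_{\topo}/{G^*}^{000}_{\topo}$ preceding Theorem \ref{main theorem 1} immediately yields
$$\bar f(\cl_\tau(Y)) \subseteq \cl(\bar f(Y)) = \cl(\{e\}) = {G^*}^{00}_{\topo}/{G^*}^{000}_{\topo},$$
and since $Y \subseteq \cl_\tau(Y)$, the kernel of the restriction $\bar f|_{\cl_\tau(Y)}$ is exactly $\cl_\tau(Y) \cap Y = Y$.

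The main step -- and the main obstacle -- is the reverse inclusion: surjectivity of $\bar f|_{\cl_\tau(Y)}$ onto ${G^*}^{00}_{\topo}/{G^*}^{000}_{\topo}$. This is not a soft topological consequence (an arbitrary continuous surjection between groups need not satisfy it); one must realise any coset $a{G^*}^{000}_{\topo}$, with $a \in {G^*}^{00}_{\topo}$, as the $\bar f$-image of a $\tau$-limit of elements of $Y$ inside $\cl_\tau(Y)$. The argument from \cite{KrPi} assembles this from three ingredients: (i) the description ${G^*}^{000}_{\topo} = \bigcup_n E_n$, which lets one decompose a witness of $a \in {G^*}^{00}_{\topo}$ into data packaged in types over $M$; (ii) Proposition \ref{prop: description of *}, which converts multiplication in the semigroup $u{\mathcal M} \subseteq S_G^\mu(M)$ into ordinary multiplication of representatives subject to a finite-satisfiability condition, and so lets us translate net-convergence in $u{\mathcal M}$ into statements about ordinary group elements; and (iii) the defining property of $H(u{\mathcal M})$ as the intersection of $\tau$-closures of $\tau$-neighborhoods of the identity, which ensures that the resulting $\tau$-limits land in $\cl_\tau(Y)$ and map as prescribed under $\bar f$. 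Each individual step of the KrPi argument transposes word-for-word after the replacement $F_n \mapsto E_n$, because the combinatorics of $E_n$ (products of conjugates of elements of $\mu$ together with products $b^{-1}a$ for $M$-indiscernible $(a,b)$) satisfy the same $E_m \cdot E_n = E_{m+n}$ identity that is used in \cite{KrPi}.

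Once surjectivity has been verified, the first isomorphism theorem applied to $\bar f|_{\cl_\tau(Y)}$ produces the asserted group isomorphism $\cl_\tau(Y)/Y \cong {G^*}^{00}_{\topo}/{G^*}^{000}_{\topo}$, exhibiting the latter as a quotient of the compact (Hausdorff) topological group $\cl_\tau(Y)$ by its dense subgroup $Y$.
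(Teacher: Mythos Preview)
Your proposal is correct and is precisely the approach the paper takes: the paper's entire ``proof'' is the one-line remark that the argument of \cite[Theorem 0.2]{KrPi} goes through without essential changes once one works in $S_G^\mu(M)$ and replaces the filtration $\{F_n\}$ by $\{E_n\}$. Your outline in fact gives more detail than the paper does, and the three ingredients you isolate (the $E_n$-filtration with $E_m\cdot E_n=E_{m+n}$, Proposition \ref{prop: description of *}, and the definition of $H(u{\mathcal M})$) are exactly what is needed for the transplant.
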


Thus, Corollary 0.3 from \cite{KrPi} also holds in the topological context.

\begin{cor}\label{easy corollary}
i) If the epimorphism $\bar f\colon u{\mathcal M}/H(u{\mathcal M}) \to G^*/{G^*}^{000}_{\topo}$ is an isomorphism, then ${G^*}^{000}_{\topo}={G^*}^{00}_{\topo}$.\\
ii) If the epimorphism $f\colon u{\mathcal M} \to G^*/{G^*}^{000}_{\topo}$ is an isomorphism, then $H(u{\mathcal M})$ is trivial and ${G^*}^{000}_{\topo}={G^*}^{00}_{\topo}$. 
\end{cor}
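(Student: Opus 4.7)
The plan is to deduce (i) by exploiting the fact that $u{\mathcal M}/H(u{\mathcal M})$ is compact Hausdorff (being the generalized Bohr compactification of $G$), and then to obtain (ii) as a quick consequence of (i) together with clause (2) of Theorem \ref{main theorem 1}.

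For (i), assume $\bar f$ is an isomorphism. Since $\bar f$ is already a continuous epimorphism by Theorem \ref{main theorem 1}(3) and $u{\mathcal M}/H(u{\mathcal M})$ is compact Hausdorff, a continuous bijective group homomorphism from a compact Hausdorff space onto a quasi-compact space is automatically a closed map (hence a homeomorphism), so the logic topology on $G^*/{G^*}^{000}_{\topo}$ is Hausdorff as well. By the standard description of the logic topology, this is equivalent to ${G^*}^{000}_{\topo}$ being type-definable; combined with the fact that ${G^*}^{000}_{\topo}$ is $\aut(\C/M)$-invariant by definition, a standard invariance argument upgrades this to $M$-type-definability. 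Thus ${G^*}^{000}_{\topo}$ is a bounded-index, normal, $M$-type-definable subgroup containing $\mu$. The minimality characterization in Proposition \ref{prop: characterization of G^00_top} then forces ${G^*}^{00}_{\topo} \leq {G^*}^{000}_{\topo}$. The reverse inclusion is automatic, since ${G^*}^{00}_{\topo}$ is normal (Fact \ref{fac: from GPP}(i)), $M$-invariant, bounded-index, and contains $\mu$, so it is itself an admissible candidate in the definition of ${G^*}^{000}_{\topo}$. Hence the two components coincide.

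For (ii), assume $f \colon u{\mathcal M} \to G^*/{G^*}^{000}_{\topo}$ is an isomorphism. By Theorem \ref{main theorem 1}(2) we have $H(u{\mathcal M}) \leq \ker f$, and since $\ker f$ is trivial, $H(u{\mathcal M})$ is trivial. Then $u{\mathcal M}/H(u{\mathcal M}) = u{\mathcal M}$ and $\bar f = f$ is an isomorphism, so applying (i) yields ${G^*}^{000}_{\topo} = {G^*}^{00}_{\topo}$.

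The only delicate point I anticipate is the topological transfer step in (i): one must be comfortable that \emph{isomorphism} is taken in a sense strong enough to move Hausdorffness across $\bar f$. This is handled by the compact-to-quasi-compact observation above, after which the rest is a direct application of the minimality clause in Proposition \ref{prop: characterization of G^00_top} and the definition of ${G^*}^{000}_{\topo}$.
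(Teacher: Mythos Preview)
Your argument for (ii) is fine once (i) is established, but your proof of (i) has a genuine gap at the ``compact-to-quasi-compact'' step. The claim that a continuous bijective group homomorphism from a compact Hausdorff group onto a merely quasi-compact group is automatically closed (hence a homeomorphism) is false. A minimal counterexample: take $\mathbb{Z}/2\mathbb{Z}$ with the discrete topology mapping by the identity to $\mathbb{Z}/2\mathbb{Z}$ with the indiscrete topology; this is a continuous bijective group homomorphism, but $\{0\}$ is not closed in the target. The standard fact you are reaching for requires the \emph{target} to be Hausdorff, which is precisely what is at stake here, so the reasoning is circular. You even flag this as the delicate point, but the observation you invoke to resolve it does not hold.

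The route the paper indicates (by analogy with \cite[Corollary 0.3]{KrPi}) goes through Theorem~\ref{main theorem 2} rather than a direct topological transfer. If $\bar f$ is an isomorphism, then $Y:=\ker(\bar f)$ is trivial; since $u{\mathcal M}/H(u{\mathcal M})$ is Hausdorff, $\cl_\tau(Y)=\{e\}$, and Theorem~\ref{main theorem 2} gives ${G^*}^{00}_{\topo}/{G^*}^{000}_{\topo}\cong \cl_\tau(Y)/Y=\{e\}$. This yields (i) immediately, and your deduction of (ii) from (i) via Theorem~\ref{main theorem 1}(2) then goes through unchanged.
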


Let $\xi \colon G^*/{G^*}^{000}_{\topo} \to G^*/{G^*}^{00}_{\topo}$ be the obvious map.
The proof of Corollary 0.4 from \cite{KrPi} also goes through, and we get 

\begin{cor}\label{main corollary 3}
Suppose $G$ is strongly amenable. Then the epimorphism $\xi \circ \bar f\colon u{\mathcal M}/H(u{\mathcal M}) \to G^*/{G^*}^{00}_{\topo}$ is an isomorphism, so ${G^*}^{000}_{\topo}={G^*}^{00}_{\topo}$. In particular, this holds when $G$ is nilpotent.
\end{cor}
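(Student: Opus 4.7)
The plan is to transplant the proof of \cite[Corollary 0.4]{KrPi} into the topological category, now that Theorem \ref{main theorem 1} supplies the requisite chain of continuous epimorphisms. Concretely, I would argue that the continuous epimorphism
\[
\xi \circ \bar f \colon u{\mathcal M}/H(u{\mathcal M}) \twoheadrightarrow G^*/{G^*}^{00}_{\topo}
\]
of compact Hausdorff groups is in fact an isomorphism under the strong amenability hypothesis. By Fact \ref{fac: from GPP}, the target is the (topological) Bohr compactification of $G$; by the topological version of Chapter IX of \cite{Gl} discussed above, the source is the generalized Bohr compactification of $G$. Both maps commute with the canonical maps from $G$.

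The key ingredient is the classical theorem of Glasner characterizing strong amenability: $G$ is strongly amenable if and only if every minimal proximal $G$-flow is trivial. In that situation the structural description of $u{\mathcal M}/H(u{\mathcal M})$ given in Chapter IX of \cite{Gl} shows that the natural factor map from the generalized Bohr compactification onto the Bohr compactification has trivial (proximal) kernel, so the two compactifications coincide. Hence $\xi \circ \bar f$ is a continuous isomorphism. Since this map factors as $\bar f$ (surjective) followed by $\xi$, the map $\xi$ must itself be bijective, which by definition of $\xi$ says exactly ${G^*}^{000}_{\topo} = {G^*}^{00}_{\topo}$. The ``in particular'' clause is then immediate from the classical fact (also in \cite{Gl}) that every nilpotent topological group is strongly amenable.

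The only genuine obstacle is making sure that the machinery from Chapter IX of \cite{Gl}, as well as the interpretation of $u{\mathcal M}/H(u{\mathcal M})$ as the generalized Bohr compactification and its comparison with the ordinary Bohr compactification under strong amenability, really do work for topological (rather than discrete) $G$. This has been addressed in the discussion preceding Theorem \ref{main theorem 1}, where it is noted that the arguments depend only on two formal properties of the class of $G$-flows (closure under products and under taking quotients by closed $G$-invariant equivalence relations), both of which remain valid in the topological setting. Granting this, no further new ideas are needed and the proof of \cite[Corollary 0.4]{KrPi} transports essentially verbatim.
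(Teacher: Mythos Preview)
Your proposal is correct and follows essentially the same approach as the paper, which simply states that ``the proof of Corollary 0.4 from \cite{KrPi} also goes through.'' You have accurately unpacked what that proof entails: identifying $u{\mathcal M}/H(u{\mathcal M})$ and $G^*/{G^*}^{00}_{\topo}$ as the generalized Bohr and Bohr compactifications respectively, invoking Glasner's theorem that these coincide for strongly amenable $G$, and then deducing bijectivity of $\xi$ from that of $\xi\circ\bar f$.
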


\begin{ex}\label{ex: 000}
Let $G$ be the additive group of the reals, and let $M$ be the group $G$ expanded by predicates for all subsets of $G$. By Corollary 0.5 from \cite{KrPi}, we have ${G^*}^{000}_M = {G^*}^{00}_M$. By the previous corollary, we have ${G^*}^{000}_{\topo}={G^*}^{00}_{\topo}$. By Example \ref{ex: Anand 1}, we know that ${G^*}^{00}_M \subsetneq {G^*}^{00}_{\topo}$. Therefore, ${G^*}^{000}_M \subsetneq {G^*}^{000}_{\topo}$.
\end{ex}

A natural question is to understand for which groups ${G^*}^{000}_{\topo}={G^*}^{00}_{\topo}$. Conjecture \ref{con: topological version of the conjecture from KrPi}  strengthens Corollary \ref{main corollary 3} and predicts that it is true for amenable (in particular, for solvable) groups. In Section \ref{section: amenability of G}, we prove it for topological groups possessing a basis of open neighborhoods of the identity consisting of open subgroups. Here, we give  an example of a non-discrete topological group for which the two components are different.

Recall that the universal cover of   $\Sl_2({\mathbb R})$ can be written as the product ${\mathbb Z}\times \Sl_2({\mathbb R})$ on which multiplication is given by the standard 2-cocycle taking values $-1,0,1$  so that that the projection on the second coordinate is the covering map and there is an open neighborhood $U$ of the identity in this universal cover which is contained in $\{0\} \times  \Sl_2({\mathbb R})$ (e.g. see \cite{As}).

\begin{ex}
Let $G$ be the universal cover of $\Sl_2({\mathbb R})$ written as above. 
We treat $G$ as a group definable in any expansion $M$ of the 2-sorted structure $(({\mathbb Z},+), ({\mathbb R},+,\cdot))$ which has predicates  for all open subsets of $G$.
Then
$${G^*}^{000}_{\topo}={G^*}^{000}_M= ({{\mathbb Z}^*}^{000}+{\mathbb Z}) \times \Sl_2({\mathbb R}^*) \subsetneq G^*={G^*}^{00}_M={G^*}^{00}_{\topo},$$
where ${{\mathbb Z}^*}^{000}$ denotes the invariant connected component of ${\mathbb Z}^*$ computed in the expansion of $({\mathbb Z},+)$ by predicates for all subsets of ${\mathbb Z}$ (note that, by Corollary 0.5 from \cite{KrPi}, this coincides with the analogously defined ${{\mathbb Z}^*}^{00}$).
\end{ex}

\begin{proof}
Since the topology induced on the definable subgroup ${\mathbb Z}$ of $G$ is discrete, all subsets of ${\mathbb Z}$ are definable in the structure induced from $M$, so, modifying slightly the argument from \cite[Theorem 3.2]{CoPi}, one can show that
$${G^*}^{000}_M= ({{\mathbb Z}^*}^{000}+{\mathbb Z}) \times \Sl_2({\mathbb R}^*) \subsetneq G^*={G^*}^{00}_M.$$
Note that this already implies that ${G^*}^{000}_M$ and  ${G^*}^{00}_M$ do not depend on the choice of the language (as long as the language contains predicates for all open subsets of $G$, of course).

Since there is an open neighborhood $U$ of the identity in $G$ which is contained in $\{0\} \times  \Sl_2({\mathbb R})$, we see that
 $\mu \subseteq \{ 0 \} \times \Sl_2({\mathbb R}^*)$. But from the above description of ${G^*}^{000}_M$, $\{0\} \times  \Sl_2({\mathbb R})\subseteq {G^*}^{000}_M$. Hence, $\mu \leq {G^*}^{000}_M$, and we conclude that ${G^*}^{000}_{\topo}={G^*}^{000}_M$. The equality ${G^*}^{00}_M={G^*}^{00}_{\topo}$ follows from the fact that $G^* \geq {G^*}^{00}_{\topo} \geq {G^*}^{00}_M=G^*$.  
\end{proof}


The next corollary follows from the last example, Fact \ref{fac: from GPP} and Theorem \ref{main theorem 1}, and gives us non-trivial information about the Bohr compactification and the generalized Bohr compactification of the universal cover of $\Sl_2({\mathbb R})$ treated as a topological group.

\begin{cor}\label{cor: classical application}
The Bohr compactification of the universal cover of $\Sl_2({\mathbb R})$ is trivial, whereas its generalized Bohr compactification is non-trivial and it has as a homomorphic image the group ${\mathbb Z}^*/ ({{\mathbb Z}^*}^{00}+{\mathbb Z}) \cong ({\mathbb Z}^*/ {{\mathbb Z}^*}^{00})/ (({{\mathbb Z}^*}^{00}+{\mathbb Z})/{{\mathbb Z}^*}^{00})$ which is the Bohr compactification of the discrete group ${\mathbb Z}$ divided by a dense subgroup which is a copy of ${\mathbb Z}$.
\end{cor}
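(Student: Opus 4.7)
The plan is to piece together the immediately preceding Example with Fact \ref{fac: from GPP}(ii) and Theorem \ref{main theorem 1}, with essentially no new work needed beyond assembling the identifications.

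For the first assertion, I would simply invoke Fact \ref{fac: from GPP}(ii): the Bohr compactification of $G$ is the quotient map $G \to G^*/{G^*}^{00}_{\topo}$. The preceding Example establishes ${G^*}^{00}_{\topo}=G^*$, so this compactification is trivial.

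For the second assertion, I would use Theorem \ref{main theorem 1} to note that the generalized Bohr compactification $u\mathcal{M}/H(u\mathcal{M})$ admits a continuous epimorphism onto $G^*/{G^*}^{000}_{\topo}$, so it suffices to exhibit the latter as a non-trivial group of the claimed form. From the Example, ${G^*}^{000}_{\topo}=({{\mathbb Z}^*}^{000}+{\mathbb Z})\times\Sl_2({\mathbb R}^*)$ while $G^*={\mathbb Z}^*\times\Sl_2({\mathbb R}^*)$, so $G^*/{G^*}^{000}_{\topo}\cong{\mathbb Z}^*/({{\mathbb Z}^*}^{000}+{\mathbb Z})$. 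Invoking Corollary 0.5 of \cite{KrPi} (as already used in the Example) to replace ${{\mathbb Z}^*}^{000}$ by ${{\mathbb Z}^*}^{00}$, and then applying the third isomorphism theorem, I get precisely the expression stated in the corollary.

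Finally, to unpack the two factors: applying Fact \ref{fac: from GPP}(ii) now to the discrete group ${\mathbb Z}$ expanded by predicates for all subsets (where, as noted in the Example, ${{\mathbb Z}^*}^{00}_{\topo}$ coincides with ${{\mathbb Z}^*}^{00}$), I identify ${\mathbb Z}^*/{{\mathbb Z}^*}^{00}$ with the Bohr compactification of the discrete group ${\mathbb Z}$. Since this compactification is faithful on ${\mathbb Z}$ (${\mathbb Z}$ is maximally almost periodic, e.g.\ characters $n\mapsto e^{2\pi in\alpha}$ separate points), ${\mathbb Z}\cap{{\mathbb Z}^*}^{00}=\{0\}$, so $({{\mathbb Z}^*}^{00}+{\mathbb Z})/{{\mathbb Z}^*}^{00}\cong{\mathbb Z}$, and this copy is dense by the density clause in Fact \ref{fac: from GPP}(ii). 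Non-triviality of the quotient follows at once because the Bohr compactification of discrete ${\mathbb Z}$ is uncountable, so cannot be killed by a countable subgroup. There is no real obstacle here; the only point requiring any care is the faithfulness of ${\mathbb Z}\hookrightarrow{\mathbb Z}^*/{{\mathbb Z}^*}^{00}$, and this is a standard fact about the Bohr compactification of ${\mathbb Z}$ that I would cite rather than reprove.
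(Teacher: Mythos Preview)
Your proposal is correct and follows exactly the approach the paper indicates: the paper itself gives no detailed proof, stating only that the corollary ``follows from the last example, Fact \ref{fac: from GPP} and Theorem \ref{main theorem 1}'', which are precisely the ingredients you assemble. Your write-up is in fact more detailed than the paper's (you spell out the faithfulness and density arguments for the copy of ${\mathbb Z}$ inside its Bohr compactification, which the paper leaves implicit).
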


\subsection{The definable topological category}\label{subsection: def top}

Here, we define and describe connected components and the universal ambit for topological groups which are definable in arbitrary structures (not necessarily containing predicates for all open subsets of $G$), working in the ``category of definable, continuous functions and flows''.

Throughout this subsection, $G$ is a topological group which is definable in a structure $M$. Let us emphasize that we do not assume any connection between the topology on $G$ and the definable subsets of $G$. However, a special case which will concern us later is when the members of a basis of open neighborhoods of the identity are definable in $M$. 

The language of $M$ will be denoted by ${\mathcal L}$, and let ${\mathcal L'}$ be any language containing ${\mathcal L}$ and relation symbols whose interpretations in $M$ range over all open subsets of $G$. We work in $\C \succ M$ which is a monster model in the sense of both ${\mathcal L}$ and ${\mathcal L}'$. As always, $G^*$ denotes the interpretation of $G$ in $\C$. When we talk about definable sets, we mean ${\mathcal L}$-definable sets unless we say otherwise. Similarly, ${G^*}^{00}_{M}$ and ${G^*}^{000}_{M}$ are computed in ${\mathcal L}$. Also, $S_G(M)$ denotes the space of complete types in the sense of ${\mathcal L}$, and $S_G^{\mathcal L'}(M)$ -- in the sense of ${\mathcal L'}$.

We will be interested in definable, continuous functions from $G$ to compact (Hausdorff) spaces. By Section \ref{section: preliminaries}, we have

\begin{lem}\label{lem: formula for f^*}
Let $f \colon G \to C$ be a definable function, where $C$ is a compact space. Then the unique $M$-definable in ${\mathcal L}$ (and also in ${\mathcal L'}$) function $f^*\colon G^* \to C$ extending $f$ is given by 
 $$f^*(a)=\bigcap_{\varphi \in \tp_{\mathcal L}(a/M)} \cl(f[\varphi(M)])=\bigcap_{\varphi \in \tp_{\mathcal L'}(a/M)} \cl(f[\varphi(M)]).$$
If $f$ is additionally continuous, $f^*$ coincides with the map defined in Lemma \ref{lem: extending f}.
\end{lem}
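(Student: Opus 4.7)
The first equality is essentially a direct citation of \cite[Lemma 3.2]{GPP}. Since $f\colon G\to C$ is $\mathcal{L}$-definable in the sense of that paper, the lemma yields a unique $M$-definable-in-$\mathcal{L}$ extension $f^*\colon G^*\to C$ together with the explicit formula $f^*(a)=\bigcap_{\varphi\in\tp_{\mathcal{L}}(a/M)}\cl(f[\varphi(M)])$. So the first displayed intersection is the very definition of $f^*$ in the language $\mathcal{L}$.

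For the second equality, the plan is to show that the same $f^*$ is also $M$-definable in the richer language $\mathcal{L}'$, and then invoke \cite[Lemma 3.2]{GPP} a second time, now applied inside $\mathcal{L}'$. The $M$-definability of $f^*$ in $\mathcal{L}'$ is automatic: for any closed $F\subseteq C$, the preimage ${f^*}^{-1}[F]$ is $M$-type-definable in $\mathcal{L}$, and every $\mathcal{L}$-type-definable set over $M$ is a fortiori $\mathcal{L}'$-type-definable over $M$ because $\mathcal{L}\subseteq\mathcal{L}'$. Thus $f$ is also definable in the $\mathcal{L}'$ sense of \cite{GPP}, and \cite[Lemma 3.2]{GPP} applied in $\mathcal{L}'$ provides the unique $M$-definable-in-$\mathcal{L}'$ extension of $f$, which by uniqueness must agree with $f^*$; the same lemma supplies the required formula $f^*(a)=\bigcap_{\varphi\in\tp_{\mathcal{L}'}(a/M)}\cl(f[\varphi(M)])$.

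For the ``if $f$ is continuous'' clause, note that by Remark \ref{rem: continuous implies definable} a continuous $f$ is in particular definable in the language $\mathcal{L}'$ (which contains predicates for all open subsets of $G$), so Lemma \ref{lem: extending f} applied with the language $\mathcal{L}'$ produces a function $\tilde f\colon G^*\to C$ characterized by $\tilde f(a)=\bigcap\{\cl(f[U]):U\in\tp_{\mathcal{L}'}(a/M),\ U\text{ open in }G\}$, which is the unique $M$-definable-in-$\mathcal{L}'$ extension of $f$. Since $f^*$ is also an $M$-definable-in-$\mathcal{L}'$ extension of $f$ by the previous paragraph, the uniqueness part of \cite[Lemma 3.2]{GPP} forces $\tilde f=f^*$, as required.

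The whole argument is really just two applications of \cite[Lemma 3.2]{GPP} in different languages, plus the trivial observation that $\mathcal{L}$-type-definability over $M$ transfers to $\mathcal{L}'$-type-definability over $M$. I do not expect any genuine obstacle; the only mild point to keep straight is that ``definable'' (in the sense of \cite{GPP}) is an \emph{external} property of $f$ that depends on the language, so one must verify it separately for $\mathcal{L}$ and for $\mathcal{L}'$ before invoking the uniqueness clause.
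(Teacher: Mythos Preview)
Your proposal is correct and matches the paper's approach: the paper simply states that the lemma follows from Section~\ref{section: preliminaries} (which recalls \cite[Lemma 3.2]{GPP}), and you have spelled out exactly how---applying \cite[Lemma 3.2]{GPP} once in $\mathcal{L}$ and once in $\mathcal{L}'$, with the uniqueness clause doing the work of identifying the two extensions and of matching $f^*$ with the map from Lemma~\ref{lem: extending f}. There is nothing to add.
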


\begin{dfn}
We define ${G^*}^{00}_{\defi, \topo}:={G^*}^{00}_{\topo} \cdot {G^*}^{00}_{M}$.
\end{dfn}

By the normality of ${G^*}^{00}_{\topo}$ and  ${G^*}^{00}_{M}$, ${G^*}^{00}_{\defi, \topo}$ is also a normal subgroup of $G^*$. Moreover,  ${G^*}^{00}_{\defi, \topo}$ is $M$-type-definable in ${\mathcal L'}$ and we equip $G^*/{G^*}^{00}_{\defi,\topo}$ with the logic topology computed in ${\mathcal L'}$.

\begin{prop}\label{prop: description of G^00_def,top}
i)  ${G^*}^{00}_{\defi, \topo}= \mu \cdot {G^*}^{00}_{M}$.\\
ii) ${G^*}^{00}_{\defi, \topo}$ is the smallest $M$-type-definable in ${\mathcal L}$, bounded index subgroup of $G^*$ containing ${G^*}^{00}_{\topo}$ (equivalently, containing $\mu$).\\
iii) The quotient map from $G$ to $G^*/{G^*}^{00}_{\defi,\topo}$ is a definable, continuous compactification of $G$.\\
iv) The quotient map from $G$ to $G^*/{G^*}^{00}_{\defi,\topo}$ is in fact the definable, continuous Bohr compactification of $G$ (i.e. the unique up to isomorphism universal definable, continuous compactification of $G$).
\end{prop}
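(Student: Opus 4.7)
The plan is to prove (i)--(iv) in order, with the bulk of the work in (ii); then (iii) and (iv) follow easily from (ii), Fact~\ref{fac: from GPP}, and Lemmas~\ref{lem: extending f} and~\ref{lem: formula for f^*}. For (i), unfolding the definition gives ${G^*}^{00}_{\defi,\topo}={G^*}^{00}_{\topo}\cdot {G^*}^{00}_M$, and Corollary~\ref{cor: formula for G^00_top}(ii) applied in $\mathcal L'$ gives ${G^*}^{00}_{\topo}=\mu\cdot ({G^*}^{00}_M)^{\mathcal L'}$, where the superscript indicates the language used to compute the component. Since $\mathcal L\subseteq\mathcal L'$, we have $({G^*}^{00}_M)^{\mathcal L'}\leq ({G^*}^{00}_M)^{\mathcal L}$, and as both are normal their product equals the larger one, yielding ${G^*}^{00}_{\defi,\topo}=\mu\cdot {G^*}^{00}_M$.

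For (ii), the central claim is that $\mu\cdot {G^*}^{00}_M$ is $M$-type-definable in $\mathcal L$ (it is a priori only $\mathcal L'$-$M$-type-definable). By the principle recalled at the end of Section~\ref{section: preliminaries} comparing logic topologies in $\mathcal L\subseteq\mathcal L'$, it suffices to check that $\mu\cdot {G^*}^{00}_M$ is $\mathcal L$-$M$-invariant as a subset of $G^*$. So let $\sigma\in\aut_{\mathcal L}(\C/M)$. For each $a\in\mu$ we have $\sigma(a)\equiv_M a$ in $\mathcal L$, hence $\sigma(a)a^{-1}\in {G^*}^{000}_M\leq {G^*}^{00}_M$ and $\sigma(a)\in\mu\cdot {G^*}^{00}_M$. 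Together with $\sigma({G^*}^{00}_M)={G^*}^{00}_M$ this gives $\sigma(\mu\cdot {G^*}^{00}_M)\subseteq\mu\cdot {G^*}^{00}_M$; applying the same to $\sigma^{-1}$ yields equality. For the minimality assertion, if $H$ is any $\mathcal L$-$M$-type-definable bounded-index subgroup of $G^*$ containing $\mu$, then $H\supseteq {G^*}^{00}_M$ by minimality of the latter in $\mathcal L$, so $H\supseteq\mu\cdot {G^*}^{00}_M={G^*}^{00}_{\defi,\topo}$. The parenthetical equivalence ``$H\supseteq\mu\Leftrightarrow H\supseteq {G^*}^{00}_{\topo}$'' holds because any such $H$ is in particular $\mathcal L'$-$M$-type-definable of bounded index, and Proposition~\ref{prop: characterization of G^00_top} identifies ${G^*}^{00}_{\topo}$ as the smallest such containing $\mu$.

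For (iii) and (iv), use (ii) together with the comparison principle to see that the $\mathcal L$- and $\mathcal L'$-logic topologies on $G^*/{G^*}^{00}_{\defi,\topo}$ agree, so the quotient is a compact Hausdorff topological group. Denote the quotient map by $q$. Density of $q[G]$ is folklore (preimages of nonempty open sets under the quotient are ``$\biglor$-definable'' over $M$, hence meet $G$). Continuity of $q$ follows by factoring it through the topological Bohr compactification $G\to G^*/{G^*}^{00}_{\topo}$ (Fact~\ref{fac: from GPP}) and the continuous epimorphism $G^*/{G^*}^{00}_{\topo}\to G^*/{G^*}^{00}_{\defi,\topo}$; definability of $q$ follows by factoring instead through the definable Bohr compactification $G\to G^*/{G^*}^{00}_M$ of~\cite{GPP} (post-composition of a definable map with a continuous map between compact spaces is definable). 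For universality in (iv), let $f\colon G\to C$ be any definable, continuous compactification and extend it to $f^*\colon G^*\to C$ via Lemma~\ref{lem: formula for f^*} (the extension exists and is $M$-definable in $\mathcal L$); since $f$ is continuous, Lemma~\ref{lem: extending f} additionally gives ${f^*}^{-1}[F]=\bigcap\{f^{-1}[V]^*:V\supseteq F\text{ open in }C\}$, and arguing as in the proof of Fact~\ref{fac: from GPP}(ii), $f^*$ is a group homomorphism. Its kernel is $\mathcal L$-$M$-type-definable of bounded index, hence contains ${G^*}^{00}_M$; and it contains $\mu$, for if $a\in\mu$ and $V\ni e_C$ is open, then $f^{-1}[V]$ is an open neighborhood of $e_G$, so $a\in f^{-1}[V]^*$ and $f^*(a)\in\cl(V)$, and intersecting over $V$ forces $f^*(a)=e_C$. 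Thus $\ker(f^*)\supseteq\mu\cdot {G^*}^{00}_M={G^*}^{00}_{\defi,\topo}$ and $f^*$ factors through $q$ as a continuous homomorphism commuting with $q$ and $f$.

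The one non-routine step, and the expected main obstacle, is the $\mathcal L$-$M$-invariance of $\mu\cdot {G^*}^{00}_M$ in (ii). Although $\mu$ itself need not be $\mathcal L$-invariant (it is defined via open sets which typically are only in $\mathcal L'$), the displacement $\sigma(a)a^{-1}$ under an $\mathcal L$-automorphism fixing $M$ always lies in ${G^*}^{000}_M\leq {G^*}^{00}_M$ and so is absorbed by the second factor of the product; the payoff is that Section~\ref{section: preliminaries}'s comparison principle then upgrades $\mathcal L'$-type-definability to $\mathcal L$-type-definability for free, which is what allows (iii) and (iv) to go through.
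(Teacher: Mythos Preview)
Your proof is correct and follows essentially the same approach as the paper's. The key step in (ii)---that $\mu\cdot {G^*}^{00}_M$ is $\mathcal L$-$M$-invariant because $a\equiv_M^{\mathcal L}\sigma(a)$ forces $\sigma(a)a^{-1}\in {G^*}^{00}_M$---is exactly the paper's argument, and your treatment of (iii) and (iv) via the two factorizations (through ${G^*}^{00}_{\topo}$ for continuity, through ${G^*}^{00}_M$ for definability) matches as well; the only cosmetic difference is that in (iv) you verify $\mu\subseteq\ker(f^*)$ directly, whereas the paper cites the proof of Fact~\ref{fac: from GPP} to obtain the slightly stronger ${G^*}^{00}_{\topo}\subseteq\ker(f^*)$.
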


\begin{proof}
i) This follows from the fact that $\mu \leq {G^*}^{00}_{\topo} = \mu \cdot {G^*}^{00}_{M, {\mathcal L'}}\leq \mu \cdot {G^*}^{00}_{M}$, where ${G^*}^{00}_{M, {\mathcal L'}}$ is computed in the language ${\mathcal L'}$, which we have by Corollary \ref{cor: formula for G^00_top}.\\[1mm]
ii) It is clear that  ${G^*}^{00}_{\defi, \topo}$ is the smallest subgroup of $G^*$ which contains  ${G^*}^{00}_{\topo}$ and ${G^*}^{00}_{M}$ (equivalently, which contains $\mu$ and ${G^*}^{00}_{M}$). Since any $M$-type-definable in ${\mathcal L}$ subgroup of $G^*$ of bounded index contains  ${G^*}^{00}_{M}$, it remains to show that ${G^*}^{00}_{\defi, \topo}$ is $M$-type-definable in ${\mathcal L}$. But this follows from the following two observations: ${G^*}^{00}_{\defi, \topo}$ is ($M$-)type-definable in ${\mathcal L'}$; ${G^*}^{00}_{\defi, \topo}$ is $M$-invariant in ${\mathcal L}$ (which is true, because $a \equiv_M b$ implies that $a^{-1}b \in {G^*}^{00}_{M} \leq {G^*}^{00}_{\defi, \topo}$).\\[1mm]
iii) The fact that this quotient map $\pi$ is a homomorphism with dense image is clear. Continuity of $\pi$ follows from the continuity of the quotient map $G \to G^*/{G^*}^{00}_{\topo}$ (by Fact \ref{fac: from GPP}) and the obvious map $G^*/{G^*}^{00}_{\topo} \to G^*/{G^*}^{00}_{\defi,\topo}$. It remains to check that $\pi$ is definable. 
For this note that the logic topologies on $G^*/{G^*}^{00}_M$ computed in ${\mathcal L}$ and ${\mathcal L'}$ coincide.
Therefore, the obvious map $G^*/{G^*}^{00}_M \to G^*/{G^*}^{00}_{\defi,\topo}$ is continuous. Since the quotient map $G \to  G^*/{G^*}^{00}_M$ is definable (by \cite[Propostion 3.4]{GPP}), we conclude that $\pi$ is definable as well.\\[1mm]
iv) Let $f\colon G \to C$ be a definable, continuous compactfication of $G$. Take $f^*\colon G^* \to C$ as in Lemma \ref{lem: formula for f^*}. By the proof of \cite[Propostion 3.4]{GPP}, we know that $f^*$ is a homomorphism and ${G^*}^{00}_M \leq \ker(f^*)$. By the last paragraph of the proof of Fact \ref{fac: from GPP}, we know that ${G^*}^{00}_{\topo} \leq \ker(f^*)$. Therefore, ${G^*}^{00}_{\defi,\topo}\leq \ker(f^*)$, and we finish as usual (see the last sentence of the proof of Fact \ref{fac: from GPP}).
\end{proof}

\begin{dfn}
We define ${G^*}^{000}_{\defi, \topo}:={G^*}^{000}_{\topo} \cdot {G^*}^{000}_{M}$.
\end{dfn}

${G^*}^{000}_{\defi, \topo}$ is clearly a normal subgroup of $G^*$ which is $M$-invariant in ${\mathcal L'}$.

\begin{rem}\label{rem: three characterizations of G^000_defi,topo}
i) ${G^*}^{000}_{\defi, \topo}$ is the smallest $M$-invariant in ${\mathcal L}$, bounded index subgroup of $G^*$ containing ${G^*}^{000}_{\topo}$.\\
ii) ${G^*}^{000}_{\defi,\topo}$ is the smallest normal, bounded index, $M$-invariant in ${\mathcal L}$ subgroup  of $G^*$ containing $\mu$.\\
iii) ${G^*}^{000}_{\defi,\topo}=\langle \mu ^{G^*}\rangle \cdot {G^*}^{000}_M$.
\end{rem}

\begin{proof}
i) Each $M$-invariant in ${\mathcal L}$ subgroup of bounded index contains ${G^*}^{000}_{M}$, so it is enough to show ${G^*}^{000}_{\defi, \topo}$ is $M$-invariant in ${\mathcal L}$, which follows as in the proof of Proposition \ref{prop: description of G^00_def,top}(ii).\\
ii) follows easily from (i) and  the definitions of ${G^*}^{000}_{\defi,\topo}$ and ${G^*}^{000}_{\topo}$.\\
iii) follows from (ii) and the $M$-invariance  in ${\mathcal L}$ of $\langle \mu ^{G^*}\rangle \cdot {G^*}^{000}_M$.
\end{proof}

\begin{rem}\label{rem: equality of various components}
i) If $G$ is discrete, then ${G^*}^{000}_{\defi, \topo}={G^*}^{000}_{M}\geq {G^*}^{000}_{\topo}$ and ${G^*}^{00}_{\defi, \topo}={G^*}^{00}_{M}\geq {G^*}^{00}_{\topo}$.\\
ii) If ${\mathcal L}$ contains predicates for all open subsets of $G$, then ${G^*}^{000}_{\defi, \topo}={G^*}^{000}_{\topo} \geq {G^*}^{000}_{M}$ and ${G^*}^{00}_{\defi, \topo}={G^*}^{00}_{\topo} \geq {G^*}^{00}_{M}$.
\end{rem}

\begin{rem}
Note that the characterizations from Proposition \ref{prop: description of G^00_def,top}(i) and Remark \ref{rem: three characterizations of G^000_defi,topo}(iii) can be used as definitions of  ${G^*}^{00}_{\defi,\topo}$ and ${G^*}^{000}_{\defi,\topo}$, even in the wider context when ${\mathcal L'}$ is any extension of ${\mathcal L}$ such that all members of some basis of open neighborhoods of the identity in $G$ are definable in ${\mathcal L'}$ (with parameters from $M$); the difference is that now more monster models are allowed, because we do not require ${\mathcal L'}$ to contain predicates for all open subsets of $G$. Then, by a standard argument, we get that the quotients $G^*/{G^*}^{00}_{\defi, \topo}$ and $G^*/{G^*}^{000}_{\defi, \topo}$ do not depend on the choice of the monster model in which they are computed.
\end{rem}

Our next goal is to give a description of the {\em universal definable topological $G$-ambit}, i.e. the universal $G$-ambit in the category of $G$-ambits which are both definable and topological. Recall that in \cite{Kr} the universal definable $G$-ambit (of $G$ treated as a discrete group) was described as the quotient $S_G(M)/E$, or equivalently as $G^*/E'$, for a certain closed equivalence relation $E$ on $S_G(M)$ and the corresponding $M$-type-definable equivalence relation $E'$ on $G^*$. 
(A description of the relation $E'$ can be found in Section 2 of \cite{Kr}. We do not recall this description here, because we will not use it.)
We also consider the relation $E_\mu$ defined in the previous subsection, so that $G^*/E_\mu$ is the universal (topological) $G$-ambit.

Define $E_1'$ to be the finest $M$-type-definable in ${\mathcal L'}$ equivalence relation on $G^*$ which contains $E' \cup E_\mu$. By Remark \ref{rem: E_mu as composition} and the fact that $\equiv_M^{\mathcal L'}\;\subseteq \;\equiv_M^{\mathcal L}\; \subseteq E'$,
we see that $E_1'$ is the finest $M$-type-definable in ${\mathcal L'}$ equivalence relation on $G^*$ which contains $E' \;\cup \sim$, where $\sim$ is the relation defined in the previous subsection.

\begin{rem}\label{rem: description of E_1'}
$E_1'$ is the finest $M$-type-definable in ${\mathcal L}$ equivalence relation on $G^*$ which contains $E' \cup E_\mu$ (equivalently, $E' \;\cup \sim$).
\end{rem}

\begin{proof}
This follows easily from the observation that the logic topologies on $G^*/E'$ computed in ${\mathcal L}$ and in ${\mathcal L'}$ coincide. Indeed, from this, the obvious map $\pi \colon G^*/E' \to G^*/E_1'$ is continuous. Therefore, it is easy to see that $E_1'$, being the preimage of the diagonal by the obvious map $G^* \times G^* \to G^*/E_1' \times G^*/E_1'$, must be $M$-type-definable in ${\mathcal L}$.
\end{proof}

Let $E_1$ be the equivalence relation on $S_G(M)$ given by 
$$pE_1 q \iff (\exists a \models p) (\exists b \models q) (a E_1' b) \iff (\forall a \models p) (\forall b \models q) (a E_1' b).$$
We leave to the reader to check that $E_1$ is the finest closed equivalence relation on $S_G(M)$ which contains both $E$ and the (not necessarily equivalence) relation $\sim_\mu^{\mathcal L}$ given by $p\sim_\mu^{\mathcal L}q \iff  (\exists a \models p) (\exists b \models q) (ab^{-1} \in \mu)$.

Since $E'$ and $E_\mu$ are both $G$-invariant, we get that that $G$ acts on $G^*/E_1'$ by $g(a/E_1')=(ga)/E_1'$ and on $S_G(M)/E_1$ by $g(\tp(a/M)/E_1)=\tp(ga/M)/E_1$. The assignment $a/E_1' \mapsto \tp(a/M)/E_1$ is a homeomorphism from $G^*/E_1'$ to $S_G(M)/E_1$ preserving the action of $G$.

\begin{rem}
$(G, G^*/E_1', e/E_1')$ (equivalently, $(G,S_G(M)/E_1,\tp(e/M)/E_1)$) is a definable topological $G$-ambit.
\end{rem}

\begin{proof}
We have the obvious continuous map map $\theta \colon G^*/E_\mu \to G^*/E_1'$ which preserves the action of $G$. Hence, since $G^*/E_\mu$ and $G^*/E_1'$ are compact, continuity of the action of $G$ on $G^*/E_1'$ follows from the continuity of the action of $G$ on $G^*/E_\mu$.

Now, we check the definability of the ambit. Take any $a/E_1' \in G^*/E_1'$, and let $f_a^1 \colon G \to G^*/E_1'$ be given by $g \mapsto g(a/E_1')$ and $f_a \colon G \to G^*/E'$ by $g \mapsto g(a/E')$. We have the obvious continuous map $\eta \colon G^*/E' \to G^*/E_1'$ such that $f_a^1=\eta \circ f_a$. Therefore,  since we know that $f_a$ is definable, $f_a^1$ is also definable.
\end{proof}

\begin{prop}\label{prop: description of the universal definable topological ambit}
$(G, G^*/E_1', e/E_1')$ (equivalently, $(G,S_G(M)/E_1,\tp(e/M)/E_1)$) is the universal definable topological $G$-ambit.
\end{prop}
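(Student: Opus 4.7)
The plan is to mimic the proof of Fact \ref{fac: universal G-ambit}, carefully layering in the universal property of the purely definable $G$-ambit $G^*/E'$ from \cite{Kr}. Given an arbitrary definable topological $G$-ambit $(G,X,x_0)$, the orbit map $f \colon G \to X$, $f(g) = gx_0$, is by hypothesis both definable (definability of the ambit) and continuous (continuity of the $G$-action on $X$ combined with continuity of multiplication in $G$). Applying Lemma \ref{lem: formula for f^*}, I would extend $f$ uniquely to an $M$-definable map $f^* \colon G^* \to X$, which by uniqueness coincides with the extension given by Lemma \ref{lem: extending f}. The central task is then to show $f^*$ factors through $E_1'$, after which the induced map $\bar{f}^* \colon G^*/E_1' \to X$ is automatically continuous, $G$-equivariant (the argument in the last paragraph of the proof of Fact \ref{fac: universal G-ambit} applies verbatim), and sends $e/E_1'$ to $x_0$; uniqueness follows from density of $\{g(e/E_1') : g \in G\}$ in $G^*/E_1'$.

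To show $f^*$ factors through $E_1'$, I would look at the equivalence relation $R \subseteq G^* \times G^*$ of lying in the same fiber of $f^*$. Because $X$ is compact Hausdorff and $f^*$ is $M$-definable in $\mathcal{L}$, the relation $R$ is $M$-type-definable in $\mathcal{L}$ (the preimage of the diagonal of $X \times X$ under $f^* \times f^*$ is type-definable in $\mathcal{L}$). Hence, by Remark \ref{rem: description of E_1'}, it suffices to verify $R \supseteq E' \cup E_\mu$. The inclusion $E' \subseteq R$ I would obtain by invoking the universal property of the definable $G$-ambit $G^*/E'$ from \cite{Kr}: forgetting the topology on $X$, $(G,X,x_0)$ is still a definable $G$-ambit, so the $M$-definable extension of $f$ factors through $E'$. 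The inclusion $E_\mu \subseteq R$ is the genuinely ``topological'' point: using Remark \ref{rem: E_mu as composition} one reduces to separately showing that $\equiv_M$ and $\sim$ are contained in $R$; the former is immediate from $M$-invariance of fibers of $f^*$, and for the latter I would copy the argument from the proof of Fact \ref{fac: universal G-ambit}, which uses the explicit formula for $f^*$ together with continuity of the action and compactness of $X$ to derive a contradiction from $f^*(a) \neq f^*(b)$ when $ab^{-1} \in \mu$.

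The main subtlety I expect is the careful bookkeeping of the languages $\mathcal{L}$ and $\mathcal{L}'$. The ambit $(G,X,x_0)$ may be entirely blind to open subsets of $G$, so one cannot use the $\mathcal{L}'$-type of parameters to control $f^*$; the proof that $E_\mu \subseteq R$ must lean purely on the continuity of $f$ (and hence of the action) rather than on any definability hypothesis about the topology. At the same time, the resulting relation $R$ is $\mathcal{L}$-type-definable — not merely $\mathcal{L}'$-type-definable — precisely because $f$ itself is definable in $\mathcal{L}$, and it is this strong type-definability that lets Remark \ref{rem: description of E_1'} close the argument and yield $E_1' \subseteq R$. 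The rest of the proof (continuity and equivariance of $\bar{f}^*$, uniqueness) is routine and parallels the proofs of the universal properties of $G^*/E_\mu$ and $G^*/E'$ already in hand.
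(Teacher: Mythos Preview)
Your proposal is correct and follows essentially the same route as the paper's proof: extend the orbit map to $f^*$, then show it factors through both $E_\mu$ (via the argument from Fact \ref{fac: universal G-ambit}) and $E'$ (via the universal property of the definable ambit), whence it factors through $E_1'$. The paper makes one step a bit more explicit than you do: to conclude that $f^*$ itself (rather than merely \emph{some} map) factors through $E'$, it uses uniqueness of the ambit morphism from $(G,S_G(M),\tp(e/M))$ to $(G,X,x_0)$ to identify $f^*$ with the map coming from the universal definable ambit; your invocation of Remark \ref{rem: description of E_1'} and the $\mathcal{L}$-type-definability of $R$ is a clean way to package the final step.
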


\begin{proof}
Let $(G,X,x)$ be an arbitrary definable topological $G$-ambit. Define $f_x \colon G \to X$ by $f_x(g)=gx$; it is continuous and definable. Take the extension $f_x^* \colon G^* \to X$ of $f_x$ given by Lemma \ref{lem: formula for f^*}. From the explicit formula for $f_x^*$, we see that it preserves the action of $G$. If we show that $f_x^*$ factors through $E_1'$, we will get a homomorphism from the ambit $(G,G^*/E_1',e/E_1')$ to $(G, X,x)$, and the proof will be complete. In order to get this factorization, it is enough to show that $f_x^*$ factors through both $E_\mu$ and $E'$. Factorization through $E_\mu$ was explicitly proved in the proof of Fact \ref{fac: universal G-ambit}, so it remains to show factorization through $E'$. 

Let $h_x \colon S_G(M) \to X$ be the factorization of $f_x^*$. This is a unique homomorphism from the $G$-ambit $(G,S_G(M),\tp(e/M))$ (for $G$ treated as a discrete group) to the $G$-ambit $(G,X,x)$. On the other hand, by the universality of the definable (not topological) $G$-ambit $(G,S_G(M)/E,\tp(e/M)/E)$, we get a unique homomorphism $k_x$ from $(G,S_G(M)/E,\tp(e/M)/E)$ to $(G,X,x)$. This induces a homomorphism $\tilde{k}_x$ from $(G,S_G(M),\tp(e/M))$ to $(G,X,x)$. By the uniqueness of $h_x$, we get $\tilde{k}_x=h_x$. Thus, $h_x$ factors through $E$, which implies that $f_x^*$ factors through $E'$.
\end{proof}

We have the following obvious epimorphisms of $G$-ambits (recall that $S_G(M)$ and $S_G(M)/E$ are $G$-ambits for $G$ considered as a discrete group; the others are topological $G$-ambits).

\begin{figure}[H]
		\centering
		\begin{tikzcd}
			& S_G(M)\arrow[dl, two heads]\arrow[dd, two heads ] & S_G^{{\mathcal L'},\mu}(M) \arrow[ddl, two heads]\\
			S_G(M)/E\arrow[dr, two heads]  & & \\
			&  S_G(M)/E_1 & 	
		\end{tikzcd}
	\end{figure}

We discuss here the special case when there is a basis of open neighborhoods of the identity in $G$ consisting of sets which are definable in the language ${\mathcal L}$ (with parameters from $M$).
 In such a situation, let $S_G^{\mu}(M)$ be the quotient $S_G(M)/\!\!\sim_\mu$, where $\sim_\mu$ is the equivalence relation on $S_G(M)$ defined by
$p \sim_\mu q \iff \mu \cdot p=\mu \cdot q$. Claim A.5 in \cite{PeSt} says that $(G,S_G^{\mu}(M),\tp(e/M)/\!\!\!\sim_\mu)$ is a topological $G$-ambit, where $g\cdot (p/\!\!\sim_\mu):=(gp)/\!\!\sim_\mu$. In fact, the whole discussion between Example \ref{ex: Anand 1} and Fact \ref{fct: continuity of the ambit} (including this fact) goes through in the present context. However, in general, this ambit does not have to be definable and it is not universal in any of our categories.
We have the following natural epimorphisms of $G$-ambits ($S_G(M)$ and $S_G(M)/E$ are $G$-ambits for $G$ considered as a discrete group; the others are topological $G$-ambits).

\begin{figure}[H]
		\centering
		\begin{tikzcd}
			& S_G(M)\arrow[dl, two heads]\arrow[d, two heads ] & S_G^{{\mathcal L'},\mu}(M) \arrow[dl, two heads]\\
			S_G(M)/E\arrow[dr, two heads] & S_G^{\mu}(M)\arrow[d,two heads]&  \\
			& S_G(M)/E_1 & 	
		\end{tikzcd}
	\end{figure}

\begin{rem}\label{rem: ambit from PeSt in the discrete case}
i) If $G$ is discrete, then $\mu = \{ e\}$, $E_1=E$ and $S_G^{\mu}(M)=S_G(M)$.\\
ii) If ${\mathcal L}$ contains predicates for all open subsets of $G$, then the above epimorphism $S_G^{{\mathcal L'},\mu}(M) \to  S_G^{\mu}(M)$ is an isomorphism.\\
iii) If all types in $S_G(M)$ are definable (and there is a basis of open neighborhoods of the identity consisting of definable sets), then $E_1=\,\sim_\mu$ and $S_G^{\mu}(M)= S_G(M)/E_1$.
\end{rem} 

\begin{proof}
Item (i) is obvious. (ii) follows from Fact \ref{fac: universal G-ambit}. The last item follows easily from Remark 2.4(ii) in \cite{Kr} which says that if all types in $S_G(M)$ are definable, then the equivalence relation $E$ is trivial.
\end{proof}

\section{Amenability and connected components}\label{section: amenability of G}

\subsection{Variants of amenability}\label{subsection: variants amenability}

Recall that a topological group $G$ is said to be {\em amenable}, if for every $G$-flow $(G,X)$ there is a left-invariant, Borel probability measure on the compact space $X$; equivalently, if there is such a measure on the universal (topological) $G$-ambit. If $G$ is discrete, this is equivalent to the existence of a left-invariant, finitely additive probability measure on all subsets of $G$.

A definable in $M$ (discrete) group $G$ is {\em definably amenable} if there is a left-invariant Keisler measure on $G$ (i.e. finitely additive probability measure on the Boolean algebra of definable subsets of $G$); equivalently, if there is a left-invariant, regular, Borel probability measure on the compact space $S_G(M)$ (see \cite{Si} for details).

Working in the category of definable flows, it makes sense to define a weaker notion of definable amenability, namely we say that $G$ is {\em weakly definably amenable} if there exists a left-invariant, Borel probability measure on the universal definable $G$-ambit, i.e. on $S_G(M)/E$ using the notation from Subsection \ref{subsection: def top}. It agrees with definable amenability if all types in $S_G(M)$ are definable, because then $E$ is trivial.

Working in the definable topological category, we introduce the following notions of amenability.

\begin{dfn}\label{definition: weak definable topological amenability}
Let $G$ be a topological group definable in a structure $M$.\\ 
i) Then $G$ is {\em weakly definably topologically amenable} if there exists a left-invariant, Borel probability measure on the universal definable topological $G$-ambit, i.e. on $S_G(M)/E_1$ (using the notation from Subsection \ref{subsection: def top}).\\
ii) 
If $G$ has a basis of open neighborhoods of the identity consisting of definable sets, we say that $G$ is {\em definably topologically amenable} if there exists a left-invariant, Borel probability measure on the $G$-ambit $S_G^{\mu}(M)$ (defined before Remark \ref{rem: ambit from PeSt in the discrete case}).
\end{dfn}

The next remark follows from the diagrams in the final part of Subsection \ref{subsection: def top}.

\begin{rem}\label{rem: relationships between amenability}
Let $G$ be a topological group definable in a structure $M$.\\
i) 
If $G$ has a basis of open neighborhoods of the identity consisting of definable sets, then definable topological amenability implies weak definable topological amenability.\\
ii) Each of the conditions ``$G$ is amenable'' and ``$G$ is definably amenable'' implies ``$G$ is weakly definably topologically amenable''. In the case when 
$G$ has a basis of open neighborhoods of the identity consisting of definable sets, each of these conditions implies ``$G$ is definably topologically amenable''.\\
iii) If $G$ is discrete, then 
definable amenability is equivalent to definable topological amenability (note that $\{\{e\}\}$ is a basis at $e$ consisting of a definable set).\\
iv) If the language contains predicates for all open subsets of $G$, then amenability of $G$ as a topological group is equivalent to definable topological amenability.
\end{rem}


We finish with a justification of the relationships between the conjectures formulated in the introduction.

Conjecture \ref{con: the most general} implies Conjecture \ref{con: general enough} by Remark \ref{rem: relationships between amenability} (i). To see that Conjecture \ref{con: general enough} implies Conjectures \ref{con: conjecture from KrPi} and \ref{con: topological version of the conjecture from KrPi}, one should use Remark \ref{rem: relationships between amenability} (iii) and (iv) together with Remark \ref{rem: equality of various components}. The fact that Theorem \ref{thm: main theorem in Section 2} implies Conjecture \ref{con: conjecture from KrPi} in its full generality and Conjecture \ref{con: topological version of the conjecture from KrPi} for groups possessing a basis of open neighborhoods of $e$ consisting of open subgroups follows by the same reasons.

\subsection{Extreme amenability}\label{subsection: extreme amenability}

As a warm up case, we first study connected components for extremely amenable groups and give a quick proof of Proposition \ref{prop: extremely amenable}.

To obtain the notions of extreme amenability in the various contexts, one has to take the appropriate definitions of amenability (from the last subsection) and replace the existence of an appropriate invariant measure by the existence of a fixed point. For example, if $G$ is a topological group definable in the structure $M$ so that there is a basis of open neighborhoods of the identity consisting of definable sets, we say that $G$ is {\em definably topologically extremely amenable} if the  $G$-ambit $S_G^{\mu}(M)$ has a fixed point.

One can formulate variants of Conjectures \ref{con: conjecture from KrPi}, \ref{con: topological version of the conjecture from KrPi}, \ref{con: the most general}, \ref{con: general enough} by strengthening the amenability assumption to extreme amenability and by strenthening the conclusions to the statements that both connected components in question are equal to $G^*$. Proposition \ref{prop: extremely amenable} contains such variants of  Conjectures \ref{con: general enough}, \ref{con: topological version of the conjecture from KrPi}, and \ref{con: conjecture from KrPi}.

Let $G$ be a topological group definable in the structure $M$, such that there is a basis of open neighborhoods of the identity consisting of definable sets. 
Recall that for any $r \in S_G(M)$ the equivalence class $r/\!\! \sim_{\mu} \in S_G^\mu(M)$ consists of all complete types over $M$ extending the partial type $\mu \cdot r$; so we will freely identify $r/\!\!\sim_{\mu}$ with $\mu \cdot r$ and with the corresponding type-definable set; so, $p \in S_G^{\mu}(M)$ can be viewed as an equivalence class of $\sim_\mu$ or a partial type or the corresponding type-definable set. Similarly, types in $S_G(M)$ are identified with the corresponding type-definable sets.


\begin{proof}[Proof of Proposition \ref{prop: extremely amenable}]
By assumption, there is a $G$-invariant $p =\mu \cdot r \in S_G^{\mu}(M)$. Since $\mu  \subseteq {G^*}^{000}_{\topo} \subseteq {G^*}^{000}_{\defi, \topo}$ and $rr^{-1} \subseteq {G^*}^{000}_{M} \subseteq {G^*}^{000}_{\defi, \topo}$, we get $pp^{-1} \subseteq {G^*}^{000}_{\defi, \topo}$. So, it remains to show that $pp^{-1}=G^*$. Take any $\varphi$ over $M$ such that $p \vdash \varphi(x)$. It is enough to show that $\varphi(G^*)\varphi(G^*)^{-1}=G^*$, and for this that $\varphi(G)\varphi(G)^{-1}=G$. Consider any $g \in G$ and $a \models p$. Then $b:=ga \models p$, so $g=ba^{-1} \in pp^{-1}$, and we are done.

Now, the two additional statements follow easily from the obvious counterpart of Remark \ref{rem: relationships between amenability} (iii) and (iv) in the extremely amenable case and from Remark \ref{rem: equality of various components}.
\end{proof}

\subsection{Amenability and connected components}\label{subsection: amenability and components}

The structure of this subsection is the following. 
First, we generalize Construction $(*)$ from \cite{HPP} which yields extensions of measures.
Then we prove Conjecture \ref{con: conjecture from KrPi} using  \cite[Theorem 12]{MaWa}. In the course of the proof, we distinguish the special case when 
all subsets of $G$ are definable in which we explain how to prove this conjecture via a simplification of the proof of \cite[Theorem 12]{MaWa}. Next, after some preparatory results, we adapt this simplification of the argument from \cite{MaWa} together with the aforementioned generalization of Construction $(*)$ in order to show Conjecture \ref{con: topological version of the conjecture from KrPi} for groups possessing a basis of open neighborhoods of the identity consisting of open subgroups. Finally, we adapt the full proof of  \cite[Theorem 12]{MaWa} to get Theorem \ref{thm: main theorem in Section 2} -- the most general result of this subsection. But this argument contains some more delicate points.

The reader is asked to read first the proof of Theorem 12 from \cite{MaWa}, as we are not going to repeat all the details from there. We will explain in details the ingredients which are new in comparison with the proof from \cite{MaWa}.

We start from an elaboration on Construction $(*)$ from \cite{HPP} on extending measures to saturated models, as it will play an important role in the proofs below. Recall that a Keisler measure ${\mathfrak m}$ on a definable subset $X$ of a structure $M$ can be thought of as a collection of functions ${\mathfrak m}_\varphi \colon S_\varphi \to [0,1]$, where $\varphi(x,y)$ ranges over formulas without parameters such that $x$ is always from the sort of $X$ and $S_\varphi$ is the sort of the variable $y$ of the given formula $\varphi(x,y)$, which satisfies certain properties corresponding to the definition of a measure; more precisely, ${\mathfrak m}_\varphi (a)={\mathfrak m}(\varphi(X,a))$. We will be also interested in the situation when ${\mathfrak m}$ is a measure defined only on some Boolean subalgebra ${\mathcal A}$ of the algebra of all definable subsets $X$. In this case, add an extra element $\infty$ greater than all element of $[0,1]$, and let ${\mathfrak m}_\varphi (a)$ be equal to ${\mathfrak m}(\varphi(X,a))$ if ${\mathfrak m}(\varphi(X,a))$ is defined and $\infty$ otherwise.\\[3mm]
{\bf Generalization of Construction (*) from \cite{HPP}}.
Let ${\mathfrak m}$ be a finitely additive probability measure defined on a Boolean subalgebra ${\mathcal A}$ of the algebra of all definable subsets of $X$, where $X$ is a definable set in a structure $M$. Consider the structure $M':=(M,[0,1] \cup \{ \infty\},+,<,{\mathfrak m}_\varphi)_\varphi$ consisting of the structure $M$, the functions ${\mathfrak m}_\varphi \colon S_\varphi \to [0,1] \cup \{ \infty \}$, the ordering $<$ on $[0,1] \cup \{ \infty \}$, and addition (modulo 1) on $[0,1]$. Take a monster model $(M^*,[0,1]^*\cup \{\infty\},+,<,{\mathfrak m}_{\varphi}^*)_\varphi$. For a definable subset $\varphi(X^*,a)$ of $X^*$ put ${\mathfrak m}^*(\varphi(X^*,a))={\mathfrak m}^*_\varphi(a)$. We will say that {\em ${\mathfrak m}^*$ is defined on $\varphi(X^*,a)$} if ${\mathfrak m}^*_\varphi(a) \in [0,1]^*$ (equivalently, ${\mathfrak m}^*_\varphi(a) < \infty$). Then:

\begin{enumerate}
\item The collection of sets on which ${\mathfrak m}^*$ is defined forms a Boolean algebra of definable (in the original language) subsets of $X^*$, which we denote by ${\mathcal A}^*$.
\item The sets definable (in the original language) over $M$ which belong to ${\mathcal A}^*$ are exactly the sets coming from ${\mathcal A}$ (i.e. the interpretations in $M^*$ of the sets from ${\mathcal A}$).
\item ${\mathfrak m}^*$ is a non-standard (i.e. taking values in $[0,1]^*$) finitely additive probability measure on ${\mathcal A}^*$.
\item For any $r \in [0,1]$ and formula $\varphi(x,y)$, $\{a \in M^*: {\mathfrak m}^*(\varphi(X^*,a)) \in [0,r]^*\}$ is definable over $M'$ in the expanded language; similarly for $[r,1]^*$ in place of $[0,r]^*$. After naming parameters from $[0,1]$, it is definable over $\emptyset$.
\item If $X=G$ is a definable group and ${\mathcal A}$ is closed under products (if $X,Y \in {\mathcal A}$, the $XY\in {\mathcal A}$) or under multiplication by elements of $G$ (if $X \in {\mathcal A}$, then $gX \in {\mathcal A}$ for all $g \in G$), then so is ${\mathcal A}^*$ (with $G^*$ in place of $G$).
\item If $X=G$ is a definable group and ${\mathfrak m}$ is $G$-invariant (in particular, ${\mathcal A}$ is $G$-invariant), then ${\mathfrak m}^*$ is $G^*$-invariant.
\end{enumerate}

Now, we can define a standard (i.e. with values in $[0,1]$) finitely additive probability measure on ${\mathcal A}^*$ as the composition $\st \circ \,{\mathfrak m}^*$, where $\st \colon [0,1]^* \to [0,1]$ is the standard part map. This measure extends ${\mathfrak m}$ (i.e. coincides with ${\mathfrak m}$ on sets definable over $M$), and we will still denote it by ${\mathfrak m}$. Then:

\begin{enumerate}
\item This extended ${\mathfrak m}$ is definable over $M'$ in the sense that for any closed subset $C$ of $[0,1]$ and any formula $\varphi(x,y)$, the set $\{a \in M^*: {\mathfrak m}(\varphi(X^*,a)) \in C$\} is type-definable over $M'$ in the expanded language. After naming parameters from $[0,1]$, it is definable over $\emptyset$.
\item If $X=G$ is a definable group and the original ${\mathfrak m}$ is $G$-invariant, then the extended ${\mathfrak m}$ is $G^*$-invariant.
\end{enumerate}

Now, we will prove Conjecture \ref{con: conjecture from KrPi}.

\begin{thm}\label{thm: conjecture from KrPi}
Let $G$ be a group definable in a structure $M$. If $G$ is definably amenable, then ${G^*}^{00}_M={G^*}^{000}_M$.
\end{thm}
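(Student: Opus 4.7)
The strategy is to use the definable amenability of $G$, via an adaptation of the Massicot--Wagner technique, to exhibit an $M$-type-definable, bounded-index subgroup $H$ of $G^*$ contained in ${G^*}^{000}_M$. Since ${G^*}^{000}_M \subseteq {G^*}^{00}_M$ is automatic and ${G^*}^{00}_M$ is by definition the smallest $M$-type-definable bounded-index subgroup of $G^*$, this would force ${G^*}^{00}_M \subseteq H \subseteq {G^*}^{000}_M \subseteq {G^*}^{00}_M$, yielding the equality we seek.

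The first step is to extend the given left $G$-invariant Keisler measure $\mathfrak{m}$ on $G$ to a left $G^*$-invariant, finitely additive probability measure $\mathfrak{m}^*$ on a Boolean algebra $\mathcal{A}^*$ of definable subsets of $G^*$, via the generalization of Construction $(*)$ just developed. The crucial properties secured by this construction are that $\mathcal{A}^*$ is closed under products and left translations by $G^*$, that $\mathfrak{m}^*$ is $G^*$-invariant, and that $\mathfrak{m}^*$ is $M'$-definable, where $M'$ is the natural expansion of $M$ encoding the measure. Next, I would adapt the argument from \cite[Theorem~12]{MaWa}: starting from a symmetric $M'$-definable set $X_0 \ni e$ of positive $\mathfrak{m}^*$-measure, recursively produce a decreasing sequence $X_0 \supseteq X_1 \supseteq \cdots$ of symmetric definable sets of positive measure with $X_{n+1} X_{n+1} \subseteq X_n$; the intersection $H := \bigcap_n X_n$ is a type-definable subgroup of $G^*$ whose bounded index is guaranteed by the standard translation-covering estimate coming from positive measure and left-invariance. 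A compactness-and-conjugation argument patterned on the proof of Fact~\ref{fac: from GPP}(i), combined with the $M$-invariance of the restriction of $\mathfrak{m}^*$ to $\mathcal{L}$-formulas, then allows one to replace $H$ with a normal, $M$-type-definable (in $\mathcal{L}$) subgroup of bounded index.

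The main obstacle is arranging $H \subseteq {G^*}^{000}_M$, i.e.\ writing every $h \in H$ as a short product of elements of the form $a^{-1}b$ with $a \equiv_M b$. The guiding idea is to exploit $G^*$-invariance together with $M'$-definability of $\mathfrak{m}^*$: a generic-element argument produces, for each $h \in H$, a pair $a, b$ lying in a common wide $X_n$ with $h = a^{-1}b$, and the $M'$-definability of $\mathfrak{m}^*$ provides enough uniformity to refine this choice so that $a \equiv_M b$, placing $h$ in the canonical generating set of ${G^*}^{000}_M$. I would first carry out the full argument under the simplifying assumption that predicates for all subsets of $G$ lie in the language (so that $\mathfrak{m}$ is defined on all of $\mathcal{P}(G)$, and the Massicot--Wagner construction admits a substantial simplification that makes the key ideas more transparent), and only then deploy the full strength of \cite[Theorem~12]{MaWa} to treat the general case.
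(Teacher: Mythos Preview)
Your proposal has a genuine gap at the step you yourself flag as the main obstacle: arranging $H \subseteq {G^*}^{000}_M$. The mechanism you sketch --- finding, for each $h \in H$, elements $a,b$ in a common $X_n$ with $h = a^{-1}b$ and then ``refining'' so that $a \equiv_M b$ --- is not a real argument. Nothing in the Massicot--Wagner construction or in the $M'$-definability of $\mathfrak m^*$ forces such $a,b$ to have the same type over $M$; the $X_n$'s are wide definable sets containing many $M$-types, and there is no uniformity principle that collapses them. As stated, this step is wishful.

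The paper's approach avoids this obstacle entirely by reversing the logic. Rather than building a type-definable $H$ and trying to push it into ${G^*}^{000}_M$, one starts with something already inside ${G^*}^{000}_M$: pick a weakly $\mathfrak m$-random type $q \in S_G(M)$ and observe that $qq^{-1} \subseteq {G^*}^{000}_M$ is automatic, since any two realizations of $q$ have the same type over $M$. The Massicot--Wagner machinery is then applied to each formula $\varphi \in q$ (with $\mathfrak m(\varphi(G))>0$ by randomness) to produce a descending chain $C_1 \supseteq C_2 \supseteq \cdots$ of symmetric generic $M$-definable sets with $C_1 \subseteq (\varphi(G)\varphi(G)^{-1})^4$ and $C_{i+1}^2 \subseteq C_i$; the intersection $\bigcap_i C_i^*$ is then an $M$-type-definable bounded-index subgroup, hence contains ${G^*}^{00}_M$. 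Intersecting over all $\varphi \in q$ gives ${G^*}^{00}_M \subseteq (qq^{-1})^4 \subseteq {G^*}^{000}_M$. The key idea you are missing is precisely this use of a weakly random complete type as the bridge between the two connected components.
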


\begin{proof}
Let ${\mathfrak m}$ be the left-invariant Keisler measure on $G$ witnessing definable
amenability. Recall that a type $q \in S_G(M)$ is called {\em weakly ${\mathfrak m}$-random} if for any $\varphi(x) \in q$, ${\mathfrak m}(\varphi(M))>0$. Weakly ${\mathfrak m}$-random types always exist. We will identify complete types over $M$ with their sets of realizations in the monster model. Since for any $q \in S_G(M)$ we have $qq^{-1} \subseteq {G^*}^{000}_M$, in order to prove our theorem, it is enough to show

\begin{lem}\label{lem: Proposition 0.2 from Anand's notes} 
Suppose $q \in S_G(M)$ is weakly ${\mathfrak m}$-random. Then ${G^*}^{00}_M \subseteq (qq^{-1})^4$.
\end{lem}

To prove this lemma, we will prove another lemma which is an adaptation of the proof of Theorem 12 from \cite{MaWa}. The latter paper appears to need that the model $M$ is $\omega^+$-saturated, but we will get around it. Recall that a subset $X$ of $G$ is {\em symmetric} if it contains $e$ and $X^{-1}=X$; it is {\em (left) generic} if finitely many (left) 	translates of $X$ by elements of $G$ cover $G$.

\begin{lem}\label{lem: main lemma in Section 2}
Let $B$ be any definable subset of $G$ with ${\mathfrak m}(B) > 0$. Then there are symmetric,
generic, definable subsets $C_1 \supseteq  C_2 \supseteq \ldots$ of $G$ such that $C_1 \subseteq (BB^{-1})^4$ and
$C_{i+1}^2 \subseteq  C_i$ for all $i$.
\end{lem}

Let us see first how Lemma \ref{lem: Proposition 0.2 from Anand's notes} follows from Lemma \ref{lem: main lemma in Section 2}. Consider
$\bigcap_i C_i^*$ (as usual $C_i^*$ is $C_i$ computed in the monster model). It is easy to see that it is an $M$-type-definable
subgroup of $G^*$ of bounded index, hence contains ${G^*}^{00}_M$.
So, by Lemma \ref{lem: main lemma in Section 2}, $(B^*{B^*}^{-1})^4$ contains ${G^*}^{00}_M$. As $B$ was an arbitrary definable subset
of $G$ of positive measure, and since for $q \in S_G(M)$ we have $(qq^{-1})^4 = \bigcap \{(\varphi(G^*)\varphi(G^*)^{-1})^4 :
\varphi \in q\}$, it follows that $(qq^{-1})^4$ contains ${G^*}^{00}_M$ for any weakly ${\mathfrak m}$-random $q$.

\begin{proof}[Proof of Lemma \ref{lem: main lemma in Section 2}]
Define $A_1 = BB^{-1}$. Then $A_1$ is definable, symmetric and generic (by Ruzsa's covering lemma, see \cite[Fact 5]{MaWa}). 

\begin{clm*}\label{clm: from Frank}
For any definable $A\subseteq G$ which is generic and symmetric there is a generic and symmetric definable set $X\subseteq G$ such that $X^8 \subseteq A^4$.
\end{clm*}

To finish the proof using this claim, we apply it to $A:=A_1=BB^{-1}$ and we get $C_1:=A^4$ and $C_2:=X^4$. Then we apply the claim to $A:=X$ and we get an appropriate $C_3$, and so on.

Claim \ref{clm: from Frank} will be easily deduced from Theorem 12 of \cite{MaWa}.
But first, we will consider the special case when all subsets of $G$ are definable and we will briefly explain how Claim \ref{clm: from Frank} can be obtained in this case by a simplification of the proof of \cite[Theorem 12]{MaWa}, which will be later adapted to prove Theorem \ref{thm:  Topological conjecture}.  The simplification in our special case is that we do not have to use the conditions $P_n^t$ and the sets $X_n$ from the proof from \cite{MaWa}, as all subsets of $G$ are now definable. 
Note that in this case ${G^*}^{00}_M = {G^*}^{00}_{\topo}$ and ${G^*}^{000}_M = {G^*}^{000}_{\topo}$, as $G$ is considered as a discrete group. Hence, Theorem \ref{thm: conjecture from KrPi} in this special case already shows that if a group $G$ is amenable (as a discrete group), then its Bohr compactification ${G^*}/{G^*}^{00}_{\topo}$ coincides with its ``weak Bohr compactification'' ${G^*}/{G^*}^{000}_{\topo}$.

\begin{proof}[Proof of Claim \ref{clm: from Frank}] First, consider the above special case.\\

{\bf Case 1 -- All subsets of $G$ are definable in $M$.}\\
%
%
%
Let $K$ be the number of translates of $A$ needed to cover $G$. In particular,  $K$ translates of $A$ cover $A^2$,  so $A$ is a $K$-approximate subgroup as in \cite[Theorem 12]{MaWa}. Take any natural number $m>0$.
For $t \in (0, 1]$ define ${\mathcal B}_t$ as the set of
subsets $B'$ of $A$ such that ${\mathfrak m}(B') \geq t {\mathfrak m}(A)$; it is nonempty, as it contains $A$. Let $f(t) = \inf\{{\mathfrak m}(B'A)/{\mathfrak m}(A) :
B' \in {\mathcal  B}_t\}$. Fix $\epsilon>0$, and let by Sanders' Lemma (Lemma 11 of \cite{MaWa}) $t$ be such
that $f(t^2/2K) \geq (1 - \epsilon)f(t)$. Choose $B' \in {\mathcal B}_t$ with ${\mathfrak m}(B'A)/{\mathfrak m}(A) \leq (1 +
\epsilon)f(t)$. Let $X = \{g \in A^2 : {\mathfrak m}(gB' \cap B') \geq (t^2/2K){\mathfrak m}(A)\}$. 
The computation on lines 15-23 of page 61 in \cite{MaWa} (which we recall below) shows that if $g_1,\dots, g_m \in X$,
then ${\mathfrak m}(g_1\dots g_mB'A \triangle B'A) < 4m\epsilon {\mathfrak m}(B'A)$. In particular, if $\epsilon \leq 1/4m$ (even $\epsilon < 1/2m$ is enough), then
$g_1\dots g_m B'A \cap B'A$ is nonempty, whereby $X^m \subseteq A^4$. Choosing $m=8$ and $\epsilon \leq 1/32$, we
see that $X^8 \leq  A^4$. 
The statement formulated at the very beginning of the proof of Theorem 12 of \cite{MaWa} tells us that
 $X$ is generic in $G$, and it is also symmetric (because $A$ is symmetric, $B' \in  {\mathcal  B}_t \subseteq  {\mathcal  B}_{t^2/2K}$ and ${\mathfrak m}$ is invariant), so the proof in Case 1 is finished. 

As promised, for the reader's convenience we recall now  the computation from \cite{MaWa} whose conclusion was used in the above argument. For $g \in X$, ${\mathfrak m}(gB'A \cap B'A) \geq {\mathfrak m}((gB' \cap B')A) \geq f(t^2/2K){\mathfrak m}(A) \geq (1 - \epsilon)f(t){\mathfrak m}(A) \geq \frac{1 - \epsilon}{1 + \epsilon} {\mathfrak m}(B'A)$; hence, ${\mathfrak m}(gB'A \vartriangle B'A) \leq  \frac{4\epsilon}{1 + \epsilon} {\mathfrak m}(B'A) < 4 \epsilon {\mathfrak m}(B'A)$. It follows that for $g_1, \dots , g_m \in X$,
${\mathfrak m}(g_1 \dots g_mB'A \vartriangle B'A) \leq {\mathfrak m}((B'A \vartriangle g_1B'A) \cup g_1(B'A\vartriangle g_2B'A) \cup \dots \cup g_1 \dots g_{m-1}(B'A\vartriangle g_mB'A)) \leq {\mathfrak m}(B'A \vartriangle g_1B'A) + {\mathfrak m}(B'A\vartriangle g_2B'A) + \dots + {\mathfrak m}(B'A\vartriangle g_mB'A) < 4m\epsilon {\mathfrak m}(B'A)$.
\\

%
{\bf Case 2 -- General case.}\\
So here, $M$ is an arbitrary structure. 
By Construction $(*)$ described before Theorem \ref{thm: conjecture from KrPi}, we know that ${\mathfrak m}$ extends to an invariant Keisler measure (which we also call ${\mathfrak m}$) on $G^*$. 
Now, we apply \cite[Theorem 12]{MaWa} (whose proof uses the $P_n^t$'s and $X_n$'s), where we take $A$
there to be our $A^*$ and work in the monster model.  As a result we obtain a definable set $S_a:=\varphi(G^*,a)$ (where $\varphi(x,y)$ is a formula without parameters and $a$ is a tuple from the monster model) which is symmetric and generic and such that $S_a ^8 \subseteq {A^*}^4$. Let $L$ be the number of translates of $S_a$ needed to cover $G^*$. For any $b$, let $S_b =\varphi(G^*,b)$.  The following conditions on $y$

\begin{itemize}
\item $S_y$ is symmetric,
\item $L$ translates of $S_y$ cover $G^*$,
\item $S_y^8 \subseteq {A^*}^4$
\end{itemize}
are all definable over $M$. Since $y:=a$ satisfies these conditions, we can find $a' \in M$ which also satisfies them. Then $X:=\varphi(G,a')$ satisfies the requirements of Claim \ref{clm: from Frank}. 
\end{proof}
As was noted before, Claim  \ref{clm: from Frank} implies Lemma \ref{lem: main lemma in Section 2}. 
\end{proof}

The proof of Theorem \ref{thm: conjecture from KrPi} has been completed.
\end{proof}

Now, we turn to the definable topological context from Theorem \ref{thm: main theorem in Section 2}. Namely, from now on, in this section we assume that $G$ is a topological group definable in a structure $M$ so that there is a basis of open neighborhoods of the identity consisting of definable, open subgroups. The main reason why we need to assume that there is such a basis is the next remark and proposition which will allow us to define a measure on certain subsets of $G$. 

Recall that the whole discussion between Example \ref{ex: Anand 1} and Fact \ref{fct: continuity of the ambit} (including this fact) goes through in our context, and take the notation from there. In particular, $S_G^{\mu}(M) \approx G^*/E_\mu$, and we will identify these $G$-ambits. Recall that $E_\mu=\sim \circ \equiv_M$, where $a \sim b \iff ab^{-1} \in \mu$. As before, $p \in S_G^{\mu}(M)$ will be understood as an equivalence class of $\sim_\mu$ or a partial type or the corresponding type-definable set.

\begin{rem}\label{rem: basic on clopens}
Let $C \subseteq G^*/E_\mu$ and let $h\colon G^* \to G^*/E_\mu$ be the quotient map. The following conditions are equivalent.\\
i) $C$ is clopen.\\
ii) $\mu \cdot h^{-1}[C]=h^{-1}[C]$ is definable (over $M$).\\
iii) $C=X^*/E_\mu$ for some $M$-definable subset $X$ of $G$ satisfying $\mu \cdot X^*=X^*$; in this situation, we have $h^{-1}[C]=X^*$.\\
iv) $H^* \cdot h^{-1}[C]=h^{-1}[C]$ for some definable, open subgroup $H$ of $G$.\\
v) $C=(H^* \cdot X^*)/E_\mu$ for some definable, open subgroup $H$ of $G$ and $M$-definable subset $X$ of $G$; in this situation, we have $h^{-1}[C]=H^* \cdot X^*$.
\end{rem}

\begin{proof}
(i) $\rightarrow$ (ii) follows from the fact that $\sim \, \subseteq E_\mu$ and the fact that the preimage under $h$ of a clopen set is definable over $M$.\\
(ii) $\rightarrow$ (iii) is clear taking $X:=h^{-1}[C]$. For the additional claim, we see that $X^*$ is invariant under multiplication by $\mu$ on the left and under automorphisms over $M$, so it is a union of $E_\mu$-classes, and hence $h^{-1}[C]=X^*$.\\
(iii) $\rightarrow$ (iv). Assume (iii). Then $h^{-1}[C]=X^*=\mu \cdot X^*$, so the conclusion follows from saturation of the monster model and the fact that $\mu$ is the intersection of all $H^*$'s with $H$ ranging over definable, open subgroups of $G$.\\
(iv) $\rightarrow$ (v) is clear taking $X:=h^{-1}[C]$. For the additional claim, we see that $H^* \cdot X^*$ is invariant under multiplication by $\mu$ on the left and under automorphisms over $M$.\\
(v) $\rightarrow$ (i). 
We already justified in (iv) $\rightarrow$ (v) that $h^{-1}[C]= H^* \cdot X^*$. Since this set is definable, $C$ is clopen.
\end{proof}

\begin{prop}\label{prop: zero-dimensionality}
$S_G^{\mu}(M) \approx G^*/E_\mu$ is zero-dimensional. More precisely, the sets $V_{H,\varphi} := \{ a/E_\mu : a/E_\mu \subseteq H^* \cdot \varphi(G^*)\}$, for a definable, clopen subgroup $H \leq G$ and a formula $\varphi(x)$ over $M$, form a basis of the topology which consists of clopen subsets.
\end{prop}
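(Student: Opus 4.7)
My plan is to prove the proposition in two stages: first show every $V_{H,\varphi}$ is clopen, then show the family is rich enough to be a basis.

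\textbf{Step 1 (each $V_{H,\varphi}$ is clopen).} Let $h\colon G^*\to G^*/E_\mu$ be the quotient map. I would first argue that the set $H^*\cdot\varphi(G^*)$ is a union of $E_\mu$-classes. Indeed, it is invariant under left multiplication by $\mu$ (since $\mu\subseteq H^*$) and it is $M$-invariant (since it is defined over $M$); by Remark \ref{rem: E_mu as composition} these two invariances are enough. Consequently $V_{H,\varphi}=h[H^*\cdot\varphi(G^*)]$ and $h^{-1}[V_{H,\varphi}]=H^*\cdot\varphi(G^*)$. Now Remark \ref{rem: basic on clopens}(v) (with $X:=\varphi(G)$) shows that $V_{H,\varphi}$ is clopen.

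\textbf{Step 2 (basis property).} Take an open $U\subseteq G^*/E_\mu$ and a point $a/E_\mu\in U$. I want to produce a definable open subgroup $H$ of $G$ and a formula $\varphi(x)\in\tp(a/M)$ such that $H^*\cdot\varphi(G^*)\subseteq h^{-1}[U]$; then $a\in H^*\cdot\varphi(G^*)$, so by Step 1 we get $a/E_\mu\in V_{H,\varphi}\subseteq U$. The set $F:=h^{-1}[(G^*/E_\mu)\setminus U]$ is type-definable over $M$, and the $E_\mu$-class of $a$, which equals $\mu\cdot\tp(a/M)(\C)$, is disjoint from $F$. Under the hypothesis of this subsection, $\mu=\bigcap_H H^*$ where $H$ ranges over definable open subgroups of $G$, and $\tp(a/M)(\C)=\bigcap_{\varphi\in\tp(a/M)}\varphi(G^*)$. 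Hence the partial type in variables $x,y,z$ consisting of ``$x\in F$'', ``$x=y\cdot z$'', ``$y\in H^*$'' for all definable open subgroups $H$, and ``$z\in\varphi(G^*)$'' for all $\varphi\in\tp(a/M)$, is inconsistent. By saturation of $\C$, some finite subtype is already inconsistent, so finitely many subgroups $H_1,\dots,H_n$ and formulas $\varphi_1,\dots,\varphi_k$ suffice. Taking $H:=H_1\cap\dots\cap H_n$ (still a definable open subgroup) and $\varphi:=\varphi_1\wedge\dots\wedge\varphi_k\in\tp(a/M)$ yields $H^*\cdot\varphi(G^*)\cap F=\emptyset$, as required.

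\textbf{Conclusion.} Step 1 gives a family of clopen subsets of the quotient; Step 2 shows that around every point of every open set one of them sits inside the open set. Therefore the $V_{H,\varphi}$ form a clopen basis, and $G^*/E_\mu$ is zero-dimensional.

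I do not expect serious obstacles: Step 1 is immediate from Remark \ref{rem: basic on clopens} once one observes the two invariances of $H^*\cdot\varphi(G^*)$, and Step 2 is a standard compactness argument exploiting the fact (crucially used in this subsection) that $\mu$ is an \emph{intersection of definable} open subgroups. The only mild care needed is checking that the inconsistent partial type in Step 2 is built from type-definable conditions so that saturation applies, and that the finite intersection of definable open subgroups is again a definable open subgroup.
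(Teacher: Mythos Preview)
Your proof is correct and follows essentially the same approach as the paper: both verify that $H^*\cdot\varphi(G^*)$ is a union of $E_\mu$-classes and invoke Remark~\ref{rem: basic on clopens} for clopenness, and both use compactness on $\mu\cdot\tp(a/M)$ to find $H$ and $\varphi$. The only cosmetic difference is that the paper first reduces to a basic open of the form $\{b/E_\mu : b/E_\mu\subseteq\psi(G^*)\}$ for a single formula $\psi$, whereas you work directly with an arbitrary open $U$ and its type-definable complementary preimage $F$; the compactness argument is the same either way.
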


\begin{proof}
Recall that a basis of open neighborhoods of any element $a/E_\mu \in G^*/E_\mu$ consists of sets of the from $\{b/E_\mu : b/E_\mu \subseteq \psi(G^*)\}$ with $\psi(x)$ ranging over formulas over $M$ such that $a/E_\mu \subseteq \psi(G^*)$.

Now, we show that the collection of all sets $V_{H,\varphi}$ is a basis of the topology. For any basic open neighborhood $V= \{ b/E_\mu : b/E_\mu \subseteq \psi(G^*)\}$ (where $\psi(x)$ is over $M$) of an element $a/E_\mu \in G^*/E_\mu$ we have $a/E_\mu = \mu \cdot \{b: b \equiv_M a\} \subseteq \psi(G^*)$, so, by assumption and compactness, there is a definable, open subgroup $H \leq G$ and a formula $\varphi(x) \in \tp(a/M)$ such that $H^* \cdot \varphi(G^*) \subseteq \psi(G^*)$. This implies $a/E_\mu \in V_{H,\varphi} \subseteq V$.

By Remark \ref{rem: basic on clopens} (v), we see that $H^* \cdot \varphi(G^*)$ is a union of $E_\mu$-classes, so $V_{H,\varphi} = (H^* \cdot \varphi(G^*))/E_\mu$ is clopen by Remark \ref{rem: basic on clopens}.
\end{proof}

Let $\nu$ be a left-invariant, Borel probability measure on $S_G^{\mu}(M)$ (in the proofs of the theorems below, it will exist as a measure witnessing the definable topological amenability of $G$).

\begin{dfn}
An element $p \in S_G^{\mu}(M)$ will be called {\em weakly $\nu$-random} if $\nu(U)>0$ for every open $U \ni p$.
\end{dfn}

By compactness of $S_G^\mu(M)$, a weakly $\nu$-random element in $S_G^\mu(M)$ always exists.
Note also that if $p \in S_G^{\mu}(M)$ is weakly $\nu$-random, then for any formula $\varphi(x)$ over $M$ such that $p \subseteq \varphi(G^*)$, one has $\nu(\varphi(G^*)/E_\mu)>0$. By Proposition \ref{prop: zero-dimensionality}, the last property is actually equivalent to the weak $\nu$-randomness of $p$.  

Let ${\mathcal A}$ be the algebra of subsets of $G$ which are preimages of clopen sets under the map $\Phi \colon G \to G^*/E_\mu$ sending $g \in G$ to $g/E_\mu \in G^*/E_\mu$. By Remark \ref{rem: basic on clopens}, all members of ${\mathcal A}$ are definable (over $M$). This algebra will play a fundamental role in the proof of Theorem \ref{thm: main theorem in Section 2} below.



\begin{rem}\label{rem: description of cal A}
For a subset $X$ of $G$ the following conditions are equivalent:\\
i) $X$ belongs to ${\mathcal A}$,\\
ii) $X=H\varphi(G)$ for some definable, open subgroup $H$ of $G$ and some formula $\varphi(x)$ over $M$,\\
iii) $X$ is definable (over $M$) and $X^*=\mu X^*$.
\end{rem}

\begin{proof}
(i) $\rightarrow$ (ii). Suppose $X=\Phi^{-1}[C]$ for a clopen $C$. By Remark \ref{rem: basic on clopens}, $C= (H^*\varphi(G^*))/E_\mu$ for some definable, open subgroup $H$ of $G$ and some formula $\varphi(x)$ over $M$, and $X=\{ g \in G: g \in H^*\varphi(G^*)\}= H\varphi(G)$.\\
(ii) $\rightarrow$ (iii) is obvious: $\mu X^*=\mu H^*\varphi(G^*)=H^*\varphi(G^*)=X^*$.\\
(iii) $\rightarrow$ (i). By Remark \ref{rem: basic on clopens}, $X^*/E_\mu$ is clopen and $\Phi^{-1}[X^*/E_\mu]= \{ g \in G: g \in X^*\}=X$.
\end{proof}


\begin{cor}\label{cor: good properties of cal A}
If $A \in {\mathcal A}$, $B$ is a definable subset of $G$ and $g \in G$, then $AB \in {\mathcal A}$ and $gA \in {\mathcal A}$. In particular, $AA^{-1} \in {\mathcal A}$.
\end{cor}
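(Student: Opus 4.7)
The plan is to reduce everything to the characterization (iii) in Remark~\ref{rem: description of cal A}: a set $X \subseteq G$ lies in $\mathcal{A}$ if and only if $X$ is $M$-definable and $X^* = \mu X^*$. So, given $A \in \mathcal{A}$, I start from $A^* = \mu A^*$ and check the two closure properties.

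For $AB$ with $B$ a definable subset of $G$: the set $AB$ is definable over $M$ since both $A$ and $B$ are, and $(AB)^* = A^* B^*$ in the monster model. Then
\[
\mu (AB)^* \;=\; \mu A^* B^* \;=\; A^* B^* \;=\; (AB)^*,
\]
where the middle equality uses $\mu A^* = A^*$. By Remark~\ref{rem: description of cal A}, $AB \in \mathcal{A}$.

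For $gA$ with $g \in G$: since $g$ is a parameter from $M$, $gA$ is definable over $M$. Here I invoke the fact, observed after the definition of $\mu$, that $\mu$ is normalized by $G$, so that $\mu g = g \mu$ for every $g \in G$. Hence
\[
\mu (gA)^* \;=\; \mu g A^* \;=\; g \mu A^* \;=\; g A^* \;=\; (gA)^*,
\]
and once again Remark~\ref{rem: description of cal A} yields $gA \in \mathcal{A}$.

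Finally, $A^{-1}$ is definable (as $A$ is), so applying the first part with $B := A^{-1}$ gives $AA^{-1} \in \mathcal{A}$. The only step that might look nontrivial is the commutation $\mu g = g \mu$ needed for the $gA$ case, but this is immediate from the $G$-normality of $\mu$ already recorded in Subsection~\ref{subsection: topological category}; there is no genuine obstacle here, the corollary is essentially a bookkeeping consequence of the equivalence established in Remark~\ref{rem: description of cal A}.
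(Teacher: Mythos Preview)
Your proof is correct and is exactly the argument the paper has in mind: the corollary is stated immediately after Remark~\ref{rem: description of cal A} with no proof given, precisely because it follows by the computation you wrote down using characterization (iii) together with the $G$-normality of $\mu$ recorded in Subsection~\ref{subsection: topological category}.
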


Now, we will prove Conjecture \ref{con: topological version of the conjecture from KrPi} (under our extra assumption) and then generalize this argument to get Theorem \ref{thm: main theorem in Section 2}.

\begin{thm}\label{thm:  Topological conjecture}
Let $G$ be a topological group possessing a basis of open neighborhoods of the identity consisting of open subgroups. If $G$ is amenable, then ${G^*}^{00}_{\topo}={G^*}^{000}_{\topo}$.
\end{thm}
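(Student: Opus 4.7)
The plan is to mirror the proof of Theorem~\ref{thm: conjecture from KrPi}, replacing the Keisler measure on definable subsets of $G$ by a measure on the algebra $\mathcal{A}$ from Remark~\ref{rem: description of cal A}, the weakly random type $q \in S_G(M)$ by a weakly $\nu$-random element $p \in S_G^{\mu}(M)$, and the target component ${G^*}^{00}_M$ by ${G^*}^{00}_{\topo}$. After adjoining predicates for all open subsets of $G$ (so that all open subgroups are definable), amenability of $G$ as a topological group provides a left-invariant Borel probability measure $\nu$ on the universal topological $G$-ambit $S_G^{\mu}(M)$, and by compactness a weakly $\nu$-random $p = \mu\cdot r \in S_G^{\mu}(M)$ exists.

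\medskip

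Since $r(G^*)r(G^*)^{-1}\subseteq {G^*}^{000}_M$ and $\mu \subseteq {G^*}^{000}_{\topo}$, Remark~\ref{rem: description of G^000} gives $pp^{-1} = \mu \cdot r(G^*)r(G^*)^{-1} \cdot \mu \subseteq {G^*}^{000}_{\topo}$. So it suffices to prove ${G^*}^{00}_{\topo} \subseteq (pp^{-1})^4$. By Proposition~\ref{prop: zero-dimensionality} the clopens $B^*/E_\mu$ with $B \in \mathcal{A}$ form a neighborhood basis of $p$, and $pp^{-1} = \bigcap\{B^*{B^*}^{-1} : B \in \mathcal{A},\ p \subseteq B^*\}$, so it is enough to show that for every $B \in \mathcal{A}$ with $\nu(B^*/E_\mu) > 0$, the set $(B^*{B^*}^{-1})^4$ contains ${G^*}^{00}_{\topo}$.

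\medskip

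The core step will be a topological analogue of Lemma~\ref{lem: main lemma in Section 2}: for every such $B$ there exist symmetric, generic sets $C_1 \supseteq C_2 \supseteq \cdots$ \emph{all lying in $\mathcal{A}$} with $C_1 \subseteq (BB^{-1})^4$ and $C_{i+1}^2 \subseteq C_i$. Granting this, $\bigcap_i C_i^*$ is an $M$-type-definable, bounded-index subgroup of $G^*$; it contains $\mu$, because each $C_i \in \mathcal{A}$ satisfies $\mu \cdot C_i^* = C_i^*$ and contains $e$; hence by Proposition~\ref{prop: characterization of G^00_top} it contains ${G^*}^{00}_{\topo}$, giving the desired inclusion. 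As in the proof of Lemma~\ref{lem: main lemma in Section 2}, the chain is produced by iterating a single claim: given a symmetric, generic $A \in \mathcal{A}$, produce a symmetric, generic $X \in \mathcal{A}$ with $X^8 \subseteq A^4$; apply this first to $A_1 := BB^{-1}$, which lies in $\mathcal{A}$ by Corollary~\ref{cor: good properties of cal A} and is symmetric and generic by Ruzsa covering.

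\medskip

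The technical heart, and the expected main obstacle, is the production of such an $X$ from $A$. The plan is to run the simplified Massicot--Wagner argument of Case~1 in the proof of Lemma~\ref{lem: main lemma in Section 2} \emph{inside} $\mathcal{A}$. The pullback of $\nu$ along $\Phi \colon G \to G^*/E_\mu$ is a left-invariant, finitely additive probability measure $\mathfrak{m}$ on $\mathcal{A}$; the generalization of Construction~$(*)$ recalled earlier extends this to a $G^*$-invariant, standard-valued, finitely additive probability measure $\mathfrak{m}^*$ on $\mathcal{A}^*$ which is definable in the enriched structure. Sanders' lemma then produces an $\mathfrak{m}^*$-large $B' \subseteq A^*$ in $\mathcal{A}^*$ with $\mathfrak{m}^*(B'A^*)/\mathfrak{m}^*(A^*)$ nearly minimal, and we set
\[
X \;:=\; \{g \in (A^*)^2 : \mathfrak{m}^*(gB' \cap B') \geq (t^2/2K)\mathfrak{m}^*(A^*)\},
\]
exactly as in Case~1. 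The delicate point, and the reason the open-subgroup hypothesis is essential, is that every auxiliary set produced along the way must remain in $\mathcal{A}^*$: the sets $B'$, $B'A^*$, and $(A^*)^2$ are controlled by Corollary~\ref{cor: good properties of cal A}, while $X$ is in $\mathcal{A}^*$ because the condition $\mu \cdot Y = Y$ characterizing $\mathcal{A}^*$ (Remark~\ref{rem: description of cal A}) is automatically preserved by $\mu$-invariance of $\mathfrak{m}^*$ together with the fact that $(A^*)^2 \in \mathcal{A}^*$. Having obtained an $X \in \mathcal{A}^*$ that is symmetric, generic, and satisfies $X^8 \subseteq (A^*)^4$, a standard definability-of-parameters transfer, exactly as in Case~2 of the proof of Lemma~\ref{lem: main lemma in Section 2}, yields an $X \in \mathcal{A}$ with the required properties, completing the argument.
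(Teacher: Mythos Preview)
Your overall strategy is correct and matches the paper: pull back $\nu$ to a measure $\mathfrak{m}$ on $\mathcal{A}$, pick a weakly $\nu$-random $p$, observe $pp^{-1}\subseteq{G^*}^{000}_{\topo}$, and reduce to producing, for each $B\in\mathcal{A}$ of positive measure, a chain $(C_i)\subseteq\mathcal{A}$ yielding a type-definable bounded-index subgroup containing $\mu$. The gap is in the execution of the final claim, producing $X\in\mathcal{A}$ from a symmetric generic $A\in\mathcal{A}$.

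You run the Case~1 Massicot--Wagner argument in $G^*$, choosing $B'\in\mathcal{A}^*$ and setting $X=\{g\in(A^*)^2:\mathfrak{m}^*(gB'\cap B')\geq c\}$. But this $X$ is definable only in the \emph{expanded} language of Construction~$(*)$ (through the function symbols $\mathfrak{m}_\varphi$), not in the original language; so the parameters-transfer from Case~2, which requires the conditions to be first-order over $M$ in the original language, does not apply as stated, and in any case that transfer carries no information about $\mathcal{A}$-membership. You also misread $\mathcal{A}^*$: in Construction~$(*)$ it is the algebra of sets $\varphi(G^*,a)$, for $\varphi$ in the original language, on which $\mathfrak{m}^*$ is defined; it is not characterized by $\mu$-invariance, so your appeal to Remark~\ref{rem: description of cal A} for $\mathcal{A}^*$ does not go through.

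The paper's fix is a single preliminary move you omit: by Corollaries~\ref{cor: formula for G^00_top}, \ref{cor: independence of the language} and~\ref{cor: independence of the language of G000}, neither the components nor the ambit change if one adds predicates for \emph{all} subsets of $G$, so one may assume every subset of $G$ is definable. The Case~1 argument then runs entirely in $G$: one chooses $B'\in\mathcal{A}$ (a subset of $G$) and sets $X=\{g\in A^2:\mathfrak{m}(gB'\cap B')\geq c\}\subseteq G$, which is now definable for free, and no work in $G^*$ and no parameter transfer is needed. What remains is precisely your $\mu$-invariance computation, now applied correctly: passing to the Construction~$(*)$ monster one has $X^*=\{g\in(A^2)^*:\mathfrak{m}^*(gB'^*\cap B'^*)\in[c,\mathfrak{m}(A)]^*\}$; since $aB'^*=B'^*$ and $a(A^2)^*=(A^2)^*$ for $a\in\mu$ (as $B',A^2\in\mathcal{A}$) and $\mathfrak{m}^*$ is $G^*$-invariant, one gets $\mu X^*=X^*$, whence $X\in\mathcal{A}$ by Remark~\ref{rem: description of cal A}.
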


\begin{proof}
We are in the situation considered above, although now we can assume that the language contains predicates for all subsets of $G$, because by Corollaries \ref{cor: independence of the language}, \ref{cor: formula for G^00_top} and \ref{cor: independence of the language of G000} neither the universal $G$-ambit  $S_G^{\mu}(M)$ nor the components ${G^*}^{00}_{\topo}$ and  ${G^*}^{000}_{\topo}$ depend on the choice of the language as long as the language contains predicates for all open subsets.
So now every subset of $G$ is automatically definable.

Define a measure ${\mathfrak m}$ on the algebra ${\mathcal A}$ by 
$${\mathfrak m}(\Phi^{-1}[C])= \nu(C),$$ 
where $C$ ranges over clopen subsets of $G^*/E_\mu$ and $\nu$ is the chosen $G$-invariant measure on $G^*/E_\mu$ witnessing amenability. It is easy to check (using the density of the image of $G$ under the function $\Phi$) that ${\mathfrak m}$ is a well-defined, $G$-invariant, finitely additive probability measure on ${\mathcal A}$.

Take a weakly $\nu$-random $q \in S_G^\mu(M)$, and consider the collection $\Sigma(x)$ of all formulas $\varphi(x)$ over $M$ which satisfy $\mu \cdot \varphi(G^*)=\varphi(G^*)$ and are implied by $q$; in other words, by Remark \ref{rem: basic on clopens}, $\varphi(G^*)/E_\mu$ (for $\varphi(x) \in \Sigma(x)$) ranges over all clopen neighborhoods of $q$. By Proposition \ref{prop: zero-dimensionality}, $\bigcap \Sigma(G^*)=q$. 

As in the proof of Proposition \ref{prop: extremely amenable}, we see that $(qq^{-1})^4 \subseteq {G^*}^{000}_{\topo}$. Thus, it remains to prove that  ${G^*}^{00}_{\topo} \subseteq (qq^{-1})^4$. By the previous paragraph, $(qq^{-1})^4= \bigcap_{\varphi(x) \in \Sigma(x)} (\varphi(G^*) \varphi(G^*)^{-1})^4$. Since $q$ is weakly $\nu$-random, for any $\varphi(x) \in \Sigma(x)$ we have ${\mathfrak m}(\varphi(G))>0$. So, by Remark \ref{rem: description of cal A} and Proposition \ref{prop: characterization of G^00_top}, the whole proof boils down to showing the following counterpart of Lemma \ref{lem: main lemma in Section 2}. 

\begin{lem}\label{lem: New version of Lemma 0.3}
Suppose $B \in {\mathcal A}$ satisfies ${\mathfrak m}(B)>0$. Then there are symmetric, generic subsets $C_1 \supseteq C_2 \supseteq ...$ of $G$ such that $C_1 \subseteq (BB^{-1})^4$, $C_{i+1}^2 \subseteq C_i$ and $C_i \in {\mathcal A}$ for all $i$.
\end{lem}

Using Corollary \ref{cor: good properties of cal A}, we easily get that $BB^{-1} \in {\mathcal A}$ is generic and symmetric. Hence, as in the proof of Lemma \ref{lem: main lemma in Section 2}, in order to finish the proof, it is enough to show the following

\begin{clm*}\label{clm: the main claim}
For any $A \in {\mathcal A}$ which is generic and symmetric there is a generic and symmetric set $X \in {\mathcal A}$ such that $X^8 \subseteq A^4$.
\end{clm*}

\begin{proof}
We follow the lines of the proof of Claim \ref{clm: from Frank} in Case 1 (in the proof of Lemma \ref{lem: main lemma in Section 2}), but always working with sets from ${\mathcal A}$ (in particular, $B'$ is now chosen from ${\mathcal A}$). Note that all the sets whose measure is computed during this argument are indeed in ${\mathcal A}$ by virtue of Corollary \ref{cor: good properties of cal A}. So, the only thing to show is that the set $X$ from that proof also belongs to ${\mathcal A}$. 

Recall that $X$ is definable for free. 
Take the monster model considered in Construction $(*)$ and compute things there.
By Remark \ref{rem: description of cal A}, in order to show that $X \in {\mathcal A}$, it is enough to prove that $\mu X^* =X^*$. 
We have $X^* = \{ g \in (A^2)^* : {\mathfrak m}^*(gB'^* \cap B'^*) \in [(t^2/2K){\mathfrak m}(A),{\mathfrak m}(A)]^*\}$.
Consider any $a \in \mu$. Since $B' \in {\mathcal A}$, we have that $aB'^*=B'^*$. So, by the $G^*$-invariance of ${\mathfrak m}^*$, for any $g$ we have ${\mathfrak m}^*(agB'^* \cap B'^*)={\mathfrak m}^*(agB'^* \cap aB'^*)={\mathfrak m}^*(gB'^* \cap B'^*)$. Moreover, since $A \in {\mathcal A}$, we get $A^2 \in {\mathcal A}$, so $a(A^2)^*=(A^2)^*$. The last two conclusions imply that if $g \in X^*$, then $ag \in X^*$, which is enough.
\end{proof}
The proof of Theorem \ref{thm:  Topological conjecture} has been completed.
\end{proof}

We finish this subsection with a proof of Theorem \ref{thm: main theorem in Section 2}.

\begin{proof}[Proof of Theorem \ref{thm: main theorem in Section 2}]

We start as in the proof of Theorem \ref{thm:  Topological conjecture} (except that now not all sets are definable), reducing the proof to Lemma \ref{lem: New version of Lemma 0.3} and then to Claim \ref{clm: the main claim}. However, now not all subsets of $G$ are definable, so we have to apply the full argument from \cite{MaWa} involving the $P_n$'s and $X_n$'s, and there is a technical problem to obtain the desired set $X$ in the algebra  ${\mathcal A}$ (see the proof of the subclaim below). A trick which resolves it is to work with the subalgebra ${\mathcal B}$ of ${\mathcal A}$ defined as the collection of all sets of the form $H_1 \varphi(G) H_2$, where $H_1,H_2$ are definable, open subgroups of G and $\varphi(x)$ is a formula over $M$.
%
%
Equivalently, we can take here $H_1=H_2$, or we can say that ${\mathcal B}$ consists of definable subsets of $G$ which are closed under multiplication on the left and on the right by elements of some open subgroup. Another equivalent definition says that ${\mathcal B}$ is the collection of all definable subsets $X$ of $G$ for which $X^*=\mu X^* \mu$. (Note that here we use the assumption that there is a basis of open neighborhoods of the identity consisting of definable, open subgroups.)

We show the following stronger version of Lemma \ref{lem: New version of Lemma 0.3}.

\begin{lem}\label{lem: Stronger version of Lemma 0.3}
Suppose $B \in {\mathcal A}$ satisfies ${\mathfrak m}(B)>0$. Then there are symmetric, generic subsets $C_1 \supseteq C_2 \supseteq ...$ of $G$ such that $C_1 \subseteq (BB^{-1})^4$, $C_{i+1}^2 \subseteq C_i$ and $C_i \in {\mathcal B}$ for all $i$.
\end{lem}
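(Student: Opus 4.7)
The approach is to follow the pattern of the proofs of Lemmas \ref{lem: main lemma in Section 2} and \ref{lem: New version of Lemma 0.3}, upgrading the conclusion from ``$C_i \in \mathcal{A}$'' to ``$C_i \in \mathcal{B}$''. Everything reduces to the subclaim: for every symmetric generic $A \in \mathcal{B}$ there is a symmetric generic $X \in \mathcal{B}$ with $X^8 \subseteq A^4$. Iterating with $C_1 := (BB^{-1})^4$ then produces the chain. The starting set does lie in $\mathcal{B}$: writing $B \in \mathcal{A}$ as $B = H\varphi(G)$ with $H$ a definable open subgroup (Remark \ref{rem: description of cal A}), one has $BB^{-1} = H\varphi(G)\varphi(G)^{-1}H \in \mathcal{B}$, and $\mathcal{B}$ is closed under products of factors sharing a common witness subgroup, so $(BB^{-1})^4 \in \mathcal{B}$.

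To prove the subclaim, fix $A \in \mathcal{B}$ with witness $H$ (so $HAH = A$) and let $K$ be the covering number of $G$ by translates of $A$. I would run the full \cite[Theorem 12]{MaWa} argument (with the $P_n^t$'s and $X_n$'s) in the monster model, after extending $\mathfrak{m}$ via Construction $(*)$ to a Boolean algebra of definable subsets of $G^*$ large enough to contain all sets manipulated below (in particular $\mathcal{B}^*$ and left/right translates of its members). The crucial new ingredient is to restrict the Sanders' Lemma optimization to the sub-class
\[
\mathcal{B}^H_t := \{B' \in \mathcal{B}^* : H^* B' H^* = B',\ B' \subseteq A^*,\ \mathfrak{m}^*(B') \geq t \mathfrak{m}(A)\},
\]
i.e., to insist that $B'$ inherits the two-sided $H^*$-invariance of $A^*$. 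Once $B'$ is chosen from $\mathcal{B}^H_t$, the resulting candidate
\[
X = \{g \in (A^*)^2 : \mathfrak{m}^*(gB' \cap B') \geq (t^2/2K)\mathfrak{m}(A)\}
\]
satisfies $H^*XH^* = X$ automatically: for $h_1,h_2 \in H^*$, $(A^*)^2$ is $H^* \times H^*$-invariant (as $H^*A^* = A^* = A^* H^*$), and $\mathfrak{m}^*(h_1 g h_2 B' \cap B') = \mathfrak{m}^*(h_1(gB' \cap B')) = \mathfrak{m}^*(gB' \cap B')$ by $H^*$-invariance of $B'$ and left-invariance of $\mathfrak{m}^*$. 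The same reasoning extends to the higher $X_n$'s, giving the desired membership in $\mathcal{B}^*$.

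The main obstacle, and the source of the ``delicate points'' alluded to in the text, is that the MW proof also invokes Sanders-style lower bounds on $\mathfrak{m}^*(ZA^*)$ for auxiliary sets $Z$ such as $B' \cap g^{-1}B'$, which need not lie in $\mathcal{B}^*$ even when $B' \in \mathcal{B}^H_t$. The restricted function
\[
f_H(s) := \inf\{\mathfrak{m}^*(B'A^*)/\mathfrak{m}(A) : B' \in \mathcal{B}^H_s\}
\]
is still increasing and bounded, so Sanders' Lemma furnishes a $t$ with $f_H(t^2/K) \geq (1-\epsilon) f_H(t)$; the real work is to compare $f_H$ applied to the $H^*$-saturation $H^*ZH^*$ (which does lie in $\mathcal{B}^H_{t^2/2K}$, since $HAH = A$ keeps it inside $A^*$) with the measure of $ZA^*$ itself, and to verify that the MW chain of inequalities closes with bounded loss even after these substitutions. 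This is where the full power of \cite{MaWa} together with new estimates is required, rather than the simplified case considered in Lemma \ref{lem: New version of Lemma 0.3}.

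Finally, transfer back to $M$ exactly as in Case 2 of the proof of Lemma \ref{lem: main lemma in Section 2}. The monster-model construction yields $X = \varphi(G^*, a)$ subject to four first-order conditions on the parameter $y$: $X$ is symmetric, $L$ translates of $X$ cover $G^*$, $X^8 \subseteq A^4$, and $H\varphi(G,y)H \subseteq \varphi(G,y)$ (first-order because $H$ is definable). By elementarity there is $a' \in M$ realizing all four, and $X := \varphi(G, a') \in \mathcal{B}$ is the desired symmetric generic set.
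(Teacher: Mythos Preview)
Your reduction to the subclaim and the observation that $BB^{-1} \in \mathcal{B}$ are fine. The gap is exactly where you flag it: you restrict the Sanders optimization to a class $\mathcal{B}^H_t$ with a \emph{fixed} witness subgroup $H$, and then the auxiliary sets $gB' \cap B'$ (for arbitrary $g \in G^*$) need not lie in $\mathcal{B}^H_{t^2/2K}$, since left $H^*$-invariance is destroyed by left translation. Your proposed fix --- replace such $Z$ by $H^*ZH^*$ and hope the MW chain of inequalities still closes ``with bounded loss'' --- is not carried out, and it is not clear that it can be: saturation only increases measures, so the upper bounds in the chain are threatened, not just the lower ones. Your final transfer step (the condition $H\varphi(G,y)H \subseteq \varphi(G,y)$) also depends on this unproven $H^*$-bi-invariance of $X_n$.

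The paper sidesteps the obstacle by \emph{not} fixing $H$. One works with the full algebra $\mathcal{B}$ (the definable $X$ with $\mu X^* \mu = X^*$) and applies Construction~$(*)$ to $\mathfrak{m}$ restricted to $\mathcal{B}$. The point is that $\mathcal{B}$, hence $\mathcal{B}^*$, is a Boolean algebra closed under products and under left multiplication by arbitrary group elements (since $\mu$ is normalized by $G$); therefore every set whose measure appears in the MW argument, including $gB' \cap B'$, already lies in $\mathcal{B}^*$, and the proof of \cite[Theorem~12]{MaWa} runs unchanged, producing a symmetric generic $X_n = X_n^t(B')$ with $X_n^8 \subseteq (A^*)^4$ for some $B' \in \mathcal{B}^*$.

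The question of whether $X_n$ itself lies in $\mathcal{B}$ (and is $M$-definable) is then handled by a different mechanism than yours. One does not impose a first-order condition on $X_n$ directly. Instead, one transfers the \emph{parameter of $B'$} down to $M$: the condition ``$B'_y \in \mathcal{B}^*$'' is definable in the expanded language of Construction~$(*)$ (it reads $\mathfrak{m}^*_\varphi(y) < \infty$), and together with the three first-order conditions on $X_{n,y}$ it is realized by some $a' \in M$. With $B' := B'_{a'}$ now $M$-definable and $B'(G) \in \mathcal{B}$, one has $\mu B' \mu = B'$. A short induction on $n$ (using $\mu A^* \mu = A^*$) shows $P_n^t(C) \Leftrightarrow P_n^t(aC)$ for all $a \in \mu$, whence $X_{n+1}^t(aC) = a\,X_{n+1}^t(C)\,a^{-1}$; combined with $\mu B' = B'$ this yields $\mu X_n \mu = X_n$, i.e., $X_n(G) \in \mathcal{B}$.
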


Since $BB^{-1}$ is generic and symmetric and clearly belongs to ${\mathcal B}$, it is enough to show the following variant of Claim \ref{clm: the main claim}.

\begin{clm*}\label{clm: a variant of the main claim}
For any $A \in {\mathcal B}$ which is generic and symmetric there is a generic and symmetric set $X \in {\mathcal B}$ such that $X^8 \subseteq A^4$.
\end{clm*}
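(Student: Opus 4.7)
The plan is to combine the full MaWa argument from Case 2 of the proof of Claim \ref{clm: from Frank} with the $\mu$-invariance trick from the proof of Claim \ref{clm: the main claim} in Theorem \ref{thm:  Topological conjecture}. The new feature compared to the latter is that we must force \emph{both} $\mu X^* = X^*$ and $X^* \mu = X^*$, i.e.\ put $X$ into ${\mathcal B}$ rather than merely into ${\mathcal A}$. First I would record that ${\mathcal B}$ is closed under products, inversion, and two-sided translation by elements of $G$ --- this uses that $\mu$ is normalized by $G$ (paragraph after Fact \ref{fac: from GPP}) --- so in particular $A \in {\mathcal B}$ implies $A^2 \in {\mathcal B}$.

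Next, I would apply Construction $(*)$ to extend ${\mathfrak m}$ to a $G^*$-invariant, finitely additive probability measure ${\mathfrak m}^*$ on a large algebra of definable subsets of $G^*$ inside a monster of the expanded structure, and run MaWa's proof of \cite[Theorem 12]{MaWa} applied to $A^*$ there, obtaining a definable set $S = \varphi(G^*, a)$ which is symmetric, generic (covered by $L$ translates for some standard $L$), and satisfies $S^8 \subseteq (A^*)^4$. The crucial modification is to choose the auxiliary set $B'$ produced by Sanders' Lemma so that $B' \in {\mathcal B}^*$ (i.e.\ $B' = \mu B' \mu$). Once this is done, the verification $S \in {\mathcal B}^*$ is the two-sided analogue of the calculation in the proof of Claim \ref{clm: the main claim}: for $g \in S$ and $c \in \mu$, both $cg$ and $gc$ lie in $(A^*)^2$ since $A^2 \in {\mathcal B}$; the defining inequality ${\mathfrak m}^*(gB' \cap B') \geq \tau$ is preserved under $g \mapsto cg$ because $c^{-1} B' = B'$ combined with $G^*$-invariance of ${\mathfrak m}^*$, and under $g \mapsto gc$ because $cB' = B'$ gives $gcB' = gB'$. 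An analogous verification handles the inequalities appearing in the $P_n^t$ refinement of \cite[Theorem 12]{MaWa}.

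For the transfer to $M$, I would arrange the formula $\varphi(x,y)$ so that for every parameter $b$ the set $\varphi(G^*, b)$ has the form $H^* \psi(G^*, b) H^*$ for a fixed definable open subgroup $H \leq G$ --- this can be achieved by thickening the formula coming from MaWa by two-sided multiplication by $H$, with $H$ chosen small enough that $H^*$ sits inside the $\mu$-neighborhoods used in the construction. Then $\varphi(G^*, b) \in {\mathcal B}^*$ automatically for every $b$, while symmetry, covering by $L$ translates, and $\varphi(G^*, y)^8 \subseteq (A^*)^4$ remain first-order conditions on $y$ over $M$ in the original language. As in Case 2 of Claim \ref{clm: from Frank}, one then finds $a' \in M$ realizing these conditions and sets $X := \varphi(G, a')$.

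The main obstacle will be carrying out the previous step: adapting MaWa's Sanders-Lemma/$P_n^t$ machinery so that all candidate subsets of $A^*$ are constrained to lie in ${\mathcal B}^*$. This amounts to proving a version of Sanders' Lemma inside the subalgebra, which should follow from the closure properties recorded at the outset together with $G^*$-invariance of ${\mathfrak m}^*$, but is the place where the argument departs most substantially from \cite{MaWa} and where the greatest care is required.
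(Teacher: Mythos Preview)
Your approach is correct and follows the same overall strategy as the paper: restrict ${\mathfrak m}$ to ${\mathcal B}$, apply Construction~$(*)$, run the full MaWa argument inside ${\mathcal B}^*$ (using that ${\mathcal B}^*$ is closed under products and translations), and transfer to $M$. The difference lies in how the transfer is arranged so that the output lands in ${\mathcal B}$.

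The paper does the transfer \emph{first}, including among the conditions on $y$ the clause ``$B'_y \in {\mathcal B}^*$'' (expressed in the expanded language as ${\mathfrak m}_\varphi(y)<\infty$). Only after replacing $a$ by $a'\in M$ does it know $\mu B'\mu = B'$, and it then proves a Subclaim by induction on $n$ --- that $P_n^t(C)\iff P_n^t(aC)$ for $a\in\mu$, via $X_{n+1}^t(aC)=aX_{n+1}^t(C)a^{-1}$ and $\mu A^*\mu=A^*$ --- to deduce $\mu X_n\mu = X_n$, hence $X_n(G)\in{\mathcal B}$.

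You instead establish $\mu S\mu = S$ \emph{before} transferring, then pick (by compactness) a definable open $H$ with $H^*SH^*=S$, rewrite $S$ via a formula of the shape $H^*\psi(G^*,y)H^*$, and transfer using only original-language conditions. This is a legitimate alternative and has the advantage of avoiding the expanded-language condition in the transfer; it buys nothing mathematically, since both routes rest on the same inductive $P_n^t$ verification, which you only gesture at (``an analogous verification handles\dots'') while the paper isolates and proves it.

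Two imprecisions to clean up. First, your parenthetical identification of ``$B'\in{\mathcal B}^*$'' with ``$\mu B'\mu=B'$'' is not the definition (membership in ${\mathcal B}^*$ means ${\mathfrak m}^*$ is defined), but the implication you need does hold: for each fixed definable open $H$ the sentence $\forall y\,({\mathfrak m}_\varphi(y)<\infty\to H\varphi(G,y)H=\varphi(G,y))$ is true in $M'$, hence in the monster, and intersecting over $H$ gives $\mu B'\mu=B'$. Second, ``$H^*$ sits inside the $\mu$-neighborhoods'' has the containment backwards (one has $\mu\subseteq H^*$); what you need, and what compactness gives from $\mu S\mu=S$, is an $H$ with $H^*SH^*=S$.
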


\begin{proof}
Now, we treat ${\mathfrak m}$ as a measure defined only on the algebra ${\mathcal B}$. We take the monster model considered in Construction $(*)$ before Theorem \ref{thm: conjecture from KrPi} (so here ${\mathcal B}$ plays the role of ${\mathcal A}$ in this construction). As in this construction, from now on, by ${\mathfrak m}$ we also denote the extension  $\st \circ \,{\mathfrak m}^*$ of ${\mathfrak m}$ to an invariant measure on the algebra ${\mathcal B}^*$.

We follow the lines of the proof of \cite[Theorem 12]{MaWa} (where we take $A$ there to be our $A^*$ and work in the monster model $M^*$), 
but always working with sets from ${\mathcal B}^*$. In particular, on line 10 of page 61 in \cite{MaWa} we choose $B\in {\mathcal B}^*$ (which
we now call $B'$) satisfying the appropriate requirements.  
Note that all sets whose measure is computed in the course of the proof in \cite{MaWa} are indeed in ${\mathcal B}^*$, because ${\mathcal B}^*$ is closed under products (if $X,Y \in {\mathcal B}^*$, then $XY \in {\mathcal B}^*$) and under multiplication by elements of $G^*$ (if $X \in {\mathcal B}^*$ and $g \in G^*$, then $gX \in {\mathcal B}^*$) which follows from Property (5) of Construction $(*)$ and the fact that ${\mathcal B}$ has such properties (which is obvious). 

Let us recall the definition of the conditions $P_n^t(C)$ and sets $X_n^t(C)$ from \cite{MaWa}, for any $C \subseteq G^*$, $n \in \omega$, and $t \in (0,1]$. Let $K$ be the number of translates of $A$ needed to cover $G$. 
\begin{itemize}
\item $P_0^t(C)$ if  $C \ne \emptyset$.
\item $P_{n+1}^{t}(C)$ if $P_n^t (C)$ holds and $A^*$ is covered by $\floor{\frac{2K}{t}}$ translates of the set
$X_{n}^t(C) := \left\{g \in {A^*}^2 : P_n^{t^2/2K}(gC \cap C) \;\; \textrm{and} \;\; P_n^{t^2/2K}(g^{-1}C \cap C)\right\}$.
\end{itemize}

So the proof from \cite{MaWa} produces a set  $X_n:=X_n^t(B')\subseteq G^*$ (for some $n$) which is definable in $M^*$, symmetric, generic, and satisfies $X_n^8 \subseteq {A^*}^4$. The problem is that the obtained set $X_n=X_n^t(B')$ need not be definable over $M$ and we do not know whether it belongs to the algebra ${\mathcal B^*}$. But we will modify it, to get what we need.

Take a formula $\varphi(x,y)$ without parameters and a tuple $a$ from $M^*$ such that $B'=\varphi(G^*,a)$. For any $b$, let $B'_b=\varphi(G^*,b)$ and $X_{n,b}=X_n^t(B'_b)$. We may assume that $A$ is $\emptyset$-definable. From the definition of $X_n^t$, we easily conclude that there is a formula $\psi(x,y)$ without parameters such that
$X_{n,b}=\psi(G^*,b)$ for any $b$. Let $L$ be the number of translates of $X_n=X_{n,a}$ needed to cover $G^*$. Consider the following conditions on $y$:

\begin{itemize}
\item $X_{n,y}$ is symmetric,
\item $L$ translates of $X_{n,y}$ cover $G^*$,
\item $X_{n,y}^8 \subseteq {A^*}^4$,
\item $B'_y \in {\mathcal B}^*$.
\end{itemize}

By the last paragraph, the first three conditions are definable over $M$ (in the original language of the structure $M$). The last condition is definable over the model $M'$ from Construction $(*)$, but in the expanded language considered in Construction $(*)$, namely, it is defined by the formula ${\mathfrak m}_\varphi(y)<\infty$. Since all these conditions are satisfied by $y:=a$, we can find $a' \in M$ which also satisfies all of them.

From now on, replace $B'=B'_a$ by $B'_{a'}$ and $X_n=X_{n,a}$ by $X_{n,a'}$. Then, $X_n$ is still symmetric and generic, and satisfies $X_n^8 \subseteq {A^*}^4$; moreover, $B' \in {\mathcal B}^*$. But now $X_n$ is definable over $M$ by the formula $\psi(x,a')$, so it makes sense to consider $X_n(G):=\psi(G,a')$, which is obviously  symmetric and generic, and satisfies $X_n(G)^8 \subseteq A^4$. Thus, the only thing to show is that $X_n(G) \in {\mathcal B}$. In order to do that, we first show by induction on $n$ the following

\begin{sbclm}
For any $C$, for each $t$, for all $a \in \mu$ we have $P_n^t(C) \iff P_n^t(aC)$.
\end{sbclm}

\begin{proof}
For $n=0$, it is clear. Suppose it holds for $n$. Take $a \in \mu$. By induction hypothesis and the fact that $A^*$ and so $(A^2)^*$ are both invariant under left and right multiplication by $\mu$ (which follows from the assumption that $A \in {\mathcal B}$), we get 
$$X_{n+1}^t(aC)= a X_{n+1}^t(C) a^{-1}.$$ 
Hence, $A^*$ is covered by the appropriate number of translates of $X_{n+1}^t(C)$ (see line $- 6$ on page 60 in \cite{MaWa}) if and only if it is covered by the same number of translates of $X_{n+1}^t(aC)$ (namely, the translates by the conjugates by $a^{-1}$ of the translating elements for $X_{n+1}^t(C)$); here, we once again use the assumption that $\mu A^* \mu =A^*$ (i.e. $A \in {\mathcal B}$). Therefore, $P_{n+1}^t(C)$ holds if and only if $P_{n+1}^t(aC)$ holds, and the subclaim is proved.
\end{proof}

Since $B' \in {\mathcal B}^*$ and $B'$ is definable over $M$ in the original language, we get by Property (2) of Construction $(*)$ that $B'(G) \in {\mathcal B}$, i.e. $\mu B' \mu = B'$. 

Since $\mu B'=B'$ and $\mu (A^2)^* \mu=(A^2)^*$, using the above subclaim and the definition of $X_n^t$, we easily get that $X_n=X_n^t(B')$ satisfies $\mu X_n \mu=X_n$, so $X_n(G) \in {\mathcal B}$. 
\end{proof}

The proof of Theorem \ref{thm: main theorem in Section 2} has been completed.
\end{proof}

\subsection{Comments and questions}

The assumption on the existence of a basis of open neighborhoods of the identity consisting of open subgroups in Theorems \ref{thm:  Topological conjecture} and \ref{thm: main theorem in Section 2} was needed to have: 
\begin{itemize}
\item Proposition \ref{prop: zero-dimensionality}, i.e. zero-dimensionality of the space $S_G^\mu(M)$, in order to define a measure ${\mathfrak m}$ on the sufficiently rich algebra ${\mathcal A}$ of definable subsets of $G$,
\item to have good properties of ${\mathcal A}$ (see Remarks \ref{rem: basic on clopens} and \ref{rem: description of cal A}).
\end{itemize}
A question is whether one can construct a useful measure without this assumption. 

In the discussion below, we use $E,E',E_1,E_1'$ introduced around Remark \ref{rem: description of E_1'}.
Let us comment on Conjecture \ref{con: the most general}. By Remark \ref{rem: equality of various components}, in the discrete case this conjecture specializes to 

\begin{conj}\label{con: the most general but discrete}
Let $G$ be a group definable in an arbitrary structure $M$. If $G$ is weakly definably amenable, then ${G^*}^{00}_{M}={G^*}^{000}_{M}$.
\end{conj}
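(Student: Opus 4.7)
The plan is to mimic the proof of Theorem \ref{thm: main theorem in Section 2}, where an analogous gap---between a measure living on a quotient ambit and the need for the Massicot--Wagner argument to be carried out inside a sufficiently rich algebra of definable subsets of $G$---was bridged by passing to an algebra of ``saturated'' definable sets. In the present discrete setting, let $\nu$ be the left-invariant Borel probability measure on $S_G(M)/E \approx G^*/E'$ witnessing weak definable amenability, and let $H := e/E'$, the $M$-type-definable subgroup of $G^*$ whose cosets are the $E'$-classes; universality of ${G^*}^{00}_M$ yields $H \subseteq {G^*}^{00}_M$. Define $\mathcal{A}_E$ to be the Boolean algebra of those $M$-definable $X \subseteq G$ with $X^* H = X^*$ (equivalently, the preimages of clopen subsets of $G^*/E'$ under the quotient $G \to G^*/E'$), and put $\mathfrak{m}(X) := \nu(X^*/E')$. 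I would first verify, in parallel with Remark \ref{rem: description of cal A} and Corollary \ref{cor: good properties of cal A}, that $\mathcal{A}_E$ is closed under left $G$-translations and under products, and that $\mathfrak{m}$ is a $G$-invariant finitely additive probability measure on $\mathcal{A}_E$.

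Next I would extend $\mathfrak{m}$ to a non-standard measure on $\mathcal{A}_E^*$ in a monster model via the generalized Construction $(*)$ from Subsection \ref{subsection: amenability and components}, and run the Massicot--Wagner machinery entirely inside $\mathcal{A}_E^*$, exactly along the lines of the proofs of Theorems \ref{thm:  Topological conjecture} and \ref{thm: main theorem in Section 2}. The outcome should be that for every $B \in \mathcal{A}_E$ with $\mathfrak{m}(B) > 0$ there is a descending chain $C_1 \supseteq C_2 \supseteq \cdots$ of symmetric generic sets in $\mathcal{A}_E$ with $C_1 \subseteq (BB^{-1})^4$ and $C_{i+1}^2 \subseteq C_i$; then $\bigcap_i C_i^*$ is an $M$-type-definable bounded-index subgroup of $G^*$ contained in $(BB^{-1})^4$, so it contains ${G^*}^{00}_M$. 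This yields ${G^*}^{00}_M \subseteq (BB^{-1})^4$ for every such $B$.

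To close the argument, I would produce a complete type $q \in S_G(M)$ such that every $\psi \in q \cap \mathcal{A}_E$ satisfies $\mathfrak{m}(\psi) > 0$; existence is by a routine Zorn/ultrafilter argument on the Boolean algebra of $M$-definable subsets of $G$. Then $qq^{-1} \subseteq {G^*}^{000}_M$ is automatic, and the previous step gives
\[
{G^*}^{00}_M \subseteq \bigcap \{(\psi(G^*)\psi(G^*)^{-1})^4 : \psi \in q \cap \mathcal{A}_E\}.
\]
The main obstacle is to identify this intersection with $(qq^{-1})^4$. This requires, first, a zero-dimensionality statement for $G^*/E'$ (the clopen sets $\psi(G^*)/E'$ with $\psi \in \mathcal{A}_E$ must separate points) analogous to Proposition \ref{prop: zero-dimensionality}, but without any hypothesis like the neighborhood basis of open subgroups that made it available in Theorem \ref{thm: main theorem in Section 2}; and second, even granting this, one only reaches $((qH)(qH)^{-1})^4$, so a further reduction to ${G^*}^{000}_M$ demands the non-trivial inclusion $H \subseteq {G^*}^{000}_M$. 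A cleaner route would be to show directly that weak definable amenability implies definable amenability---by extending $\nu$, via a Hahn--Banach or averaging argument, to a $G$-invariant Keisler measure on all definable subsets of $G$---and thereby reduce the conjecture to Theorem \ref{thm: conjecture from KrPi}; but the existence of such an extension is not evident without additional amenability of $G$ itself, and it is this gap that makes the conjecture genuinely open.
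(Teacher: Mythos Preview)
This statement is a \emph{conjecture} in the paper, not a theorem: the paper does not prove it, and in Subsection~3.4 explicitly discusses the obstructions. Your proposal is therefore not to be compared against a proof in the paper, but against the paper's own analysis of why the problem is open --- and on that score your diagnosis is essentially the same as the authors'. You correctly isolate the two gaps: (i) zero-dimensionality of $G^*/E'$ (needed so that the clopen algebra $\mathcal{A}_E$ is rich enough to cut out a weakly random type), which the paper says ``need not'' hold without something like the open-subgroup basis hypothesis; and (ii) the inclusion of the $E'$-class of $e$ in ${G^*}^{000}_M$, which the paper identifies as Problem~3.11 of \cite{Kr} (whether $aE'b$ implies $ab^{-1}\in{G^*}^{000}_M$). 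Your closing remark that weak definable amenability might imply definable amenability, reducing to Theorem~\ref{thm: conjecture from KrPi}, is a natural thought but, as you note, there is no evident invariant extension of $\nu$ to all definable sets.

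One technical slip: you set $H:=e/E'$ and call it ``the $M$-type-definable subgroup of $G^*$ whose cosets are the $E'$-classes''. There is no reason for $E'$ to be a coset relation; $E'$ is merely the finest bounded $M$-type-definable equivalence relation making $G^*/E'$ a definable $G$-flow, and its classes need not be translates of one another. So the description $X^*H=X^*$ for membership in $\mathcal{A}_E$ is not literally correct, and the claimed closure of $\mathcal{A}_E$ under products (which you need for the Massicot--Wagner argument to stay inside $\mathcal{A}_E^*$) does not follow as easily as in Corollary~\ref{cor: good properties of cal A}. This is a genuine additional wrinkle beyond the two obstacles you name. The paper's suggested workaround is to introduce a new component ${G^*}^{000+}_M$, the normal closure of all $ab^{-1}$ with $aE'b$, for which obstacle~(ii) disappears by fiat; but obstacle~(i) remains.
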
 

Even in this discrete case [or, more generally, when we have a basis of open neighborhoods of the identity consisting of open subgroups], the space $S_G(M)/E$ [or $S_G(M)/E_1$, respectively] need not be zero-dimensional, so we do not know how to define a measure on a suitable algebra of subsets of $G$. But here also another problem appears. Note that in the proofs of Proposition \ref{prop: extremely amenable}, and Theorems \ref{thm: conjecture from KrPi}, \ref{thm:  Topological conjecture} and \ref{thm: main theorem in Section 2}, the inclusion $pp^{-1} \subseteq {G^*}^{000}_{\defi, \topo}$ for $p \in S_G^\mu(M)$ was almost immediate (and did not require the extra assumption that a basis consists of open subgroups). This property was essential for our proofs to work. However, in the weakly definably [topologically] amenable situation, it is not clear whether this holds. The problem is that we do not know whether $aE'b$ implies $ab^{-1} \in {G^*}^{000}_M$ (see Problem 3.11 in \cite{Kr}). So to prove Conjecture \ref{con: the most general but discrete}, first we would have to show such an implication. If it turned out to be false in general, then one could define a new connected component, say ${G^*}^{000+}_M$, as the normal subgroup generated by all products $ab^{-1}$ for $(a,b) \in E'$. It is $M$-invariant, and by Proposition 3.10 of \cite{Kr}, we see that ${G^*}^{000}_M \leq {G^*}^{000+}_M\leq {G^*}^{00}_M$. But now $pp^{-1} \subseteq  {G^*}^{000+}_M$ for all $p \in S_G(M)/E$, so with this new definition of the connected component at least the second obstacle to prove Conjecture \ref{con: the most general but discrete} disappears. The same applies to the topological context -- to Conjecture  \ref{con: the most general} and the new component ${G^*}^{000+}_{\defi, \topo}$ defined as the normal subgroup generated by all products $ab^{-1}$ for $(a,b) \in E_1'$. 

Note that, with the obvious definition of {\em weak definable topological extreme amenability}, the proof of Proposition \ref{prop: extremely amenable} yields

\begin{rem}
Assume $G$ is weakly definably topologically extremely amenable. Then $G^*={G^*}^{000+}_{\defi, \topo}= {G^*}^{00}_{\defi, \topo}$.  But we do not know whether ${G^*}^{000}_{\defi, \topo}=G^*$, even in the discrete case.
\end{rem}

\section{Amenability and G-compactness}\label{section:amenability of Aut(M)}

The main goal of this section is to prove Theorem \ref{thm: the main result of the paper} and Corollary \ref{theorem: new theorem}. In the first subsection, we recall basic issues around G-compactness. The strategy of our proof of Theorem \ref{thm: the main result of the paper} was described in the introduction. In Subsection \ref{subsection: main subsection of the paper}, we will find the appropriate isomorphisms $\rho$ and $\theta$ which were mentioned in  the discussion following the statement of Theorem \ref{thm: the main result of the paper} in the introduction. In the course of the proof, we will show that the (topological) Bohr compactification of $G:=\aut(M)$ (where $M$ is the $\omega$-categorical structure in question) is exactly 
the natural map $r_1 \colon G \to \gal_{KP}(T)$ and that it is onto; the fact that the Bohr compactification is onto was also proved by Ben-Yaacov in \cite{Be}.

\subsection{Preliminaries around G-compactness}\label{subsection: preliminaries on G-compactness}

We recall some basic and well-known facts on strong types and Galois groups. For more information the reader is referred to \cite{LaPi, CLPZ,GiNe}.

Let $\C$ be a monster model of an arbitrary theory $T$.
\begin{itemize}
		\item $E_L$ is the finest bounded, invariant equivalence relation on a given product of sorts, and its classes are called {\em Lascar strong types},
		\item $E_{KP}$ is the finest bounded, $\emptyset$-type-definable equivalence relation on a given product of sorts, and its classes are called {\em Kim-Pillay strong types}.
               \item $E_{Sh}$ is the intersection of all finite, $\emptyset$-definable equivalence relations on a given product of sorts, and its classes are called {\em Shelah strong types}.
\end{itemize}
Clearly $E_{L}\subseteq E_{KP}\subseteq E_{Sh}$. Then $\autf_L(\C)$, $\autf_{KP}(T)$, and $\autf_{Sh}(T)$ are defined as the groups of all automorphisms of $\C$ preserving all Lascar, Kim-Pillay, and Shelah strong types, respectively, and they are called the {\em groups of Lascar, Kim-Pillay}, and {\em Shelah strong automorphisms}, respectively. It is well-known that: $\autf_L(\C)$ is the subgroup of $\aut(\C)$ generated by all automorphisms fixing small submodels of $\C$ pointwise, i.e. $\autf_L(\C)=\langle \sigma : \sigma \in \aut(\C/M)\;\, \mbox{for some}\;\, M\prec \C\rangle$; $\autf_{KP}(\C)=\aut(\C/\bdd^{heq}(\emptyset))$ (for the definition of the hyperimaginary bounded closure see \cite{Wa}); $\autf_{Sh}(\C)=\aut(\C/\acl^{eq}(\emptyset))$. Then, $\autf_L(\C) \leq \autf_{KP}(T) \leq \autf_{Sh}(T)$ are all normal subgroups of $\aut(\C)$, and the corresponding quotients $\aut(\C)/\autf_L(\C)$, $\aut(\C)/\autf_{KP}(\C)$, and $\aut(\C)/\autf_{Sh}(\C)$ are called {\em Lascar, Kim-Pillay}, and {\em Shelah Galois groups of $T$}, respectively, and they are denoted by $\gal_{L}(T)$, $\gal_{KP}(T)$, and $\gal_{Sh}(T)$. So there are obvious 
group epimorphisms
\begin{figure}[H]
		\centering
		\begin{tikzcd}
			\gal_L(T) \arrow[r, two heads, "h"]&\gal_{KP}(T)\arrow[r, two heads, "g"] &\gal_{Sh}(T).
		\end{tikzcd}
	\end{figure}

\begin{fct}\label{fct: Galois groups are absolute}
The above Galois groups do not depend (up to isomorphism) on the choice of the monster model $\C$; for example, for $\C \prec \C'$, the map taking $\sigma/\autf_L(\C)$ to $\sigma'/\autf_L(\C')$,  for any extension $\sigma' \in \aut(\C')$ of $\sigma \in \aut(\C)$, is a well-defined group isomorphism.
\end{fct}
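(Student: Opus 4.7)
The plan is to verify directly that the displayed assignment $\rho_L\colon \sigma\autf_L(\C)\mapsto \sigma'\autf_L(\C')$ is a well-defined group isomorphism; I focus on the Lascar case, since the $\gal_{KP}$ and $\gal_{Sh}$ statements admit a short parallel argument using that $\autf_{KP}(\C)=\aut(\C/\bdd^{heq}(\emptyset))$ and $\autf_{Sh}(\C)=\aut(\C/\acl^{eq}(\emptyset))$ are pointwise stabilizers of objects manifestly the same in $\C$ and $\C'$, so restriction and extension along $\C\prec\C'$ induce mutually inverse group isomorphisms on the quotients more or less directly.

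I would first establish well-definedness of $\rho_L$. Strong homogeneity of $\C'$ ensures that every $\sigma\in\aut(\C)$ lifts to some $\sigma'\in\aut(\C')$, and any two such lifts differ by an element of $\aut(\C'/\C)\subseteq\autf_L(\C')$, so the coset $\sigma'\autf_L(\C')$ is independent of the chosen lift. To see independence of representative, write any given element of $\autf_L(\C)$ as a finite product of automorphisms each fixing a small submodel $M_i\prec\C$ pointwise, lift each factor by strong homogeneity of $\C'$ to an element of $\aut(\C'/M_i)\subseteq\autf_L(\C')$, and take the product. The homomorphism property is immediate since a product of lifts is a lift of the product.

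Injectivity reduces to the \emph{absoluteness of Lascar strong types}: for tuples $a,b$ from $\C$, $aE_Lb$ holds in $\C$ if and only if it holds in $\C'$. Only the $\C'\Rightarrow\C$ direction needs work, and it is handled by expressing a Lascar chain $a=a_0,\dots,a_n=b$ in $\C'$, together with witnessing infinite $\emptyset$-indiscernible sequences $(a_i^{(j)})_{j<\omega}$ with $a_i^{(0)}=a_i$ and $a_i^{(1)}=a_{i+1}$, as a partial type in countably many variables over $\{a,b\}\subseteq\C$; this partial type is consistent in $\C'$, hence realised in $\C$ by $\kappa$-saturation. Granted absoluteness, if $\sigma\in\aut(\C)$ admits a lift $\sigma'\in\autf_L(\C')$, then $\sigma$ preserves the Lascar strong type of every tuple from $\C$, so $\sigma\in\autf_L(\C)$.

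The hard part is surjectivity. Given $\tau'\in\aut(\C')$, fix a small $M_0\prec\C$ enumerated by a tuple $m$; I would produce $m^*\in\C$ with $m^*E_L\tau'(m)$ in $\C'$. Once such $m^*$ is in hand, strong homogeneity of $\C$ yields $\sigma\in\aut(\C)$ with $\sigma(m)=m^*$ (using $m^*\equiv_{\emptyset}m$), and for any lift $\sigma'$ of $\sigma$, picking $\alpha\in\autf_L(\C')$ with $\alpha(m^*)=\tau'(m)$ (such $\alpha$ exists because $m^*E_L\tau'(m)$) gives $\tau'=\alpha\sigma'\gamma$ for some $\gamma\in\aut(\C'/m)\subseteq\autf_L(\C')$; normality of $\autf_L(\C')$ then yields $\tau'\autf_L(\C')=\sigma'\autf_L(\C')$, as required. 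The existence of $m^*$, equivalently the assertion that $\C$ realises every Lascar strong type of $\tp(m/\emptyset)$ computed in $\C'$, is the most delicate step; I would establish it by a back-and-forth construction enumerating $\C$, at each stage extending a small elementary partial map by one element using $\kappa$-saturation of $\C$ and invoking the absoluteness just proved to transfer Lascar-equivalence witnesses from $\C'$ back into $\C$.
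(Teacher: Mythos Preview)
The paper does not prove this statement: it is recorded as a Fact without proof, with the reader referred to \cite{LaPi,CLPZ,GiNe} at the start of the subsection. So there is no argument in the paper to compare against; your proposal is a self-contained verification of a folklore result.

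Your treatment of well-definedness, the homomorphism property, and injectivity (via absoluteness of $E_L$) is correct and standard. The weak point is surjectivity. You correctly isolate the key step --- producing $m^*\in\C$ with $m^* E_L \tau'(m)$ in $\C'$ --- but the proposed ``back-and-forth construction enumerating $\C$'' is both vague and unnecessary: it is unclear what partial elementary map you mean to build, between which structures, or why an enumeration of all of $\C$ (rather than of the small tuple $m$) should enter.

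The standard argument is a one-liner and avoids any induction. By $\kappa$-saturation of $\C$ over the small model $M_0\prec\C$, the type $\tp(\tau'(m)/M_0)$ is realised by some $m^*\in\C$. Then $m^*\equiv_{M_0}\tau'(m)$ in $\C'$, so there is $\alpha\in\aut(\C'/M_0)\subseteq\autf_L(\C')$ with $\alpha(\tau'(m))=m^*$; in particular $m^* E_L \tau'(m)$, and you can proceed exactly as you outlined. The point you were missing is that having the same type over \emph{some} small model already forces Lascar equivalence, so ordinary saturation over $M_0$ suffices --- no transfer of indiscernible witnesses or back-and-forth is needed.
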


\begin{dfn}
i) The theory $T$ is {\em G-compact} if $h$ is an isomorphism. Equivalently, if $\autf_{L}(\C)=\autf_{KP}(\C)$.\\
ii)  The theory $T$ is {\em G-trivial} if $\gal(T_A)$ is trivial for any finite set $A \subseteq \C$, where $T_A$ is the elementary diagram of $A$ (i.e. the theory of $\C$ in the language expanded by constants from $A$). 
\end{dfn}

The relations $E_L$, $E_{KP}$, and $E_{Sh}$ turn out to be the orbit equivalence relations of $\autf_L(\C)$, $\autf_{KP}(\C)$, and $\autf_{Sh}(\C)$, respectively, which implies that $T$ is G-compact if and only if $E_L=E_{KP}$ on all (also infinite) tuples.

Now, we recall the logic topology on $\gal_L(T)$. For more details consult \cite{LaPi} and \cite{GiNe}.
	Let $\nu\fcolon \aut(\C) \to \gal_L(\C)$ be the quotient map. Choose a small model $M$, and let $\bar m$ be its enumeration. By $S_{\bar m}(M)$ we denote $\{ \tp(\bar n/M): \bar n \equiv \bar m\}$. Let $\nu_1\fcolon \aut(\C) \to S_{\bar m}(M)$ be defined by $\nu_1(\sigma)=\tp(\sigma(\bar m)/M)$, and $\nu_2\fcolon S_{\bar m}(M) \to \gal_L(T)$ by $\nu_2(\tp(\sigma(\bar m)/M))=\sigma /\autf_L(\C)$. Then $\nu_2$ is a well-defined surjection, and $\nu=\nu_2 \circ \nu_1$. Thus, $\gal_L(T)$ becomes the quotient of the space $S_{\bar m}(M)$ by the relation of lying in the same fiber of $\nu_2$, and so we can define a topology on $\gal_L(T)$ as the quotient topology. In this way, $\gal_L(T)$ becomes a quasi-compact (so not necessarily Hausdorff) topological group. This topology does not depend on the choice of the model $M$.
	
	\begin{fct}\label{fct: characterization of topology on Gal_L(T)}
		The following conditions are equivalent for $C \subseteq \gal_L(T)$.
		\begin{enumerate}[label=\roman{*}),nosep,]
			\item
			$C$ closed.
\item For every tuple $\bar m$ enumerating a small submodel of $\C$ there is a partial type $\pi(\bar x)$ (with parameters) such that $\nu^{-1}[C]=\{ \sigma \in \aut(\C) : \sigma(\bar m) \models \pi(\bar x)\}$.
			\item
			There are a tuple $\bar a$ and a partial type $\pi(\bar x)$ (with parameters) such that $\nu^{-1}[C]=\{ \sigma \in \aut(\C) : \sigma(\bar a) \models \pi(\bar x)\}$.
		\end{enumerate}
	\end{fct}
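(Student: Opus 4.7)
The plan is to cycle through the implications $(\mathrm{i}) \Rightarrow (\mathrm{ii}) \Rightarrow (\mathrm{iii}) \Rightarrow (\mathrm{i})$. The first two are essentially immediate; the third is the only step requiring genuine work.

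For $(\mathrm{i}) \Rightarrow (\mathrm{ii})$: fix any $\bar m$ enumerating a small $M \prec \C$. By definition the topology on $\gal_L(T)$ is the quotient topology induced from $S_{\bar m}(M)$ via $\nu_2$, so $C$ closed is equivalent to $\nu_2^{-1}[C]$ being closed in the Stone space $S_{\bar m}(M)$; any such closed set is of the form $\{p \in S_{\bar m}(M) : p \vdash \pi\}$ for some partial type $\pi(\bar x)$ over $M$. Then $\nu^{-1}[C] = \nu_1^{-1}[\nu_2^{-1}[C]] = \{\sigma : \tp(\sigma(\bar m)/M) \vdash \pi\} = \{\sigma : \sigma(\bar m) \models \pi\}$, yielding (\mathrm{ii}) with $\pi$ in fact over $M$. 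The implication $(\mathrm{ii}) \Rightarrow (\mathrm{iii})$ is the degenerate case $\bar a := \bar m$.

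The main obstacle is $(\mathrm{iii}) \Rightarrow (\mathrm{i})$: here $\bar a$ and the parameters of $\pi$ bear no a priori relation to any given model over which one wants to compute the topology, and yet closedness of $\nu_2^{-1}[C]$ in $S_{\bar m}(M)$ must be established. My strategy is to exhibit $\nu_2^{-1}[C]$ as the continuous image of a closed set sitting in a larger Stone space. Fix $\bar m$ enumerating a small model $M$, let $B$ be a small set containing $M$ and all the parameters of $\pi$, and consider the Stone space $S_{\bar m\bar a}(B)$ of complete $B$-types of pairs $\bar n\bar b$ with $\bar n\bar b \equiv \bar m\bar a$. Inside it, the set
\[ F \;:=\; \{q \in S_{\bar m\bar a}(B) : q \vdash \pi(\bar x)\} \]
is closed, where $\bar x$ refers to the $\bar a$-part of the pair. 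The restriction map $\rho \colon S_{\bar m\bar a}(B) \to S_{\bar m}(M)$ which forgets the $\bar a$-coordinate and restricts parameters from $B$ down to $M$ is continuous between compact Hausdorff Stone spaces, so $\rho[F]$ is closed in $S_{\bar m}(M)$.

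It remains to verify the equality $\rho[F] = \nu_2^{-1}[C]$, which will give (\mathrm{i}). For $\rho[F] \subseteq \nu_2^{-1}[C]$, given $q \in F$ pick any realization $\bar n\bar b \models q$; since $\bar n\bar b \equiv \bar m\bar a$, the $\kappa$-strong homogeneity of $\C$ provides $\sigma \in \aut(\C)$ with $\sigma(\bar m) = \bar n$ and $\sigma(\bar a) = \bar b$, and $q \vdash \pi$ gives $\sigma(\bar a) \models \pi$, so $\sigma \in \nu^{-1}[C]$ and $\rho(q) = \tp(\sigma(\bar m)/M) \in \nu_2^{-1}[C]$. Conversely, given $p \in \nu_2^{-1}[C]$ pick $\sigma \in \nu^{-1}[C]$ with $\tp(\sigma(\bar m)/M) = p$ and set $q := \tp(\sigma(\bar m)\sigma(\bar a)/B)$; this lies in $S_{\bar m\bar a}(B)$ because $\sigma$ is an automorphism, belongs to $F$ because $\sigma(\bar a) \models \pi$, and satisfies $\rho(q) = p$. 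The key technical point --- and the only genuine difficulty --- is this bridging between the arbitrary representation of $\nu^{-1}[C]$ coming from (\mathrm{iii}) and the $M$-specific form of closedness required in (\mathrm{i}); the compactness of Stone spaces together with the strong homogeneity of $\C$ supply the bridge.
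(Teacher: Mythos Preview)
The paper states this as a \textbf{Fact} without proof, referring the reader to \cite{LaPi} and \cite{GiNe} for details on the logic topology. Your argument is correct and is essentially the standard one: the cycle $(\mathrm{i}) \Rightarrow (\mathrm{ii}) \Rightarrow (\mathrm{iii}) \Rightarrow (\mathrm{i})$ with the only substantive step being $(\mathrm{iii}) \Rightarrow (\mathrm{i})$, handled by pushing a closed set forward along the continuous restriction map $S_{\bar m\bar a}(B) \to S_{\bar m}(M)$ between compact Hausdorff Stone spaces. One small remark: in your proof of $(\mathrm{i}) \Rightarrow (\mathrm{ii})$ you are tacitly invoking the independence of the logic topology from the choice of $M$ (stated in the paper just before the Fact), which is what lets you take the quotient description via $\nu_2$ for \emph{every} small model rather than just the one fixed in the definition; this is legitimate given the paper's setup, but worth making explicit.
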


The {\em logic topologies} on $\gal_{KP}(T)$ and $\gal_{Sh}(T)$ are the quotient topologies coming from the logic topology on $\gal_L(T)$ and epimorphisms $h$ and $g$. Fact \ref{fct: Galois groups are absolute} also works for the Galois groups treated as topological groups.

The group $\gal_0(T)$ is defined as the closure of the identity in $\gal_L(T)$. It turns out that $\gal_0(T)=\autf_{KP}(\C)/\autf_L(\C)$ and $\gal_{KP}(T) \cong \gal_{L}(T)/\gal_0(T)$, so $\gal_{KP}(T)$ is a compact (Hausdorff) group; and so is $\gal_{Sh}(T)$ (it is even profinite).

The following was proved by Kim in \cite{Ki} for finite tuples; it extends to arbitrary tuples by compactness.
(In the $\omega$-categorical case, Kim's result is immediate, since a $\emptyset$-type-definable equivalence relation is $\emptyset$-definable, and so if it is bounded, it must be finite.)

\begin{fct}\label{fct: Kim's theorem}
If $T$ is $\omega$-categorical (or, more generally, small), then $E_{KP}=E_{Sh}$. Thus, $\autf_{KP}(\C)=\autf_{Sh}(\C)$.
\end{fct}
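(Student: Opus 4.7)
The plan is to reduce to proving $E_{KP}=E_{Sh}$ on each finite product of sorts; the extension to arbitrary (possibly infinite) tuples then follows by compactness, as $\bar a \, E_{KP} \, \bar b$ (respectively $\bar a \, E_{Sh} \, \bar b$) for infinite $\bar a,\bar b$ holds iff the corresponding restriction holds on every finite subtuple. So fix a finite arity $n$ and a tuple of variables $\bar x = (x_1,\dots,x_n)$.

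In the $\omega$-categorical case I would argue as follows. By definition, $E_{KP}$ on $\bar x$ is cut out by a partial type $\pi(\bar x,\bar y)$ over $\emptyset$. But under $\omega$-categoricity $S_{2n}(\emptyset)$ is finite, so every complete type over $\emptyset$ in $2n$ variables is isolated by a single formula; consequently every $\emptyset$-type-definable set in $2n$ variables, being a closed subset of a finite discrete space of types, is a finite union of isolated complete types, hence already $\emptyset$-definable. Thus $E_{KP}$ is itself $\emptyset$-definable on $\bar x$. Because $E_{KP}$ is bounded and $|S_n(\emptyset)|<\aleph_0$, it has only finitely many classes; so $E_{KP}$ is a \emph{finite}, $\emptyset$-definable equivalence relation on $\bar x$. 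By the very definition of $E_{Sh}$ as the intersection of all such relations we get $E_{Sh}\subseteq E_{KP}$, and combining with the trivial containment $E_{KP}\subseteq E_{Sh}$ yields equality.

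The general small case (countable language, $|S_n(\emptyset)|\le \aleph_0$ for every $n$) is Kim's original theorem and is considerably more delicate: here $E_{KP}$ need not be $\emptyset$-definable, and one has to approximate it from above by a countable decreasing sequence of finite, $\emptyset$-definable equivalence relations, using a Lascar-style back-and-forth that exploits smallness to enumerate the finitely many types that can appear at each stage. For a self-contained treatment I would refer to \cite{Ki}, observing that only the $\omega$-categorical instance is actually used in the later sections of this paper.

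The main obstacle lies entirely in the small-but-not-$\omega$-categorical direction; the $\omega$-categorical case, which is all that Section \ref{section:amenability of Aut(M)} requires, collapses to the single observation that in finitely many variables ``$\emptyset$-type-definable'' coincides with ``$\emptyset$-definable,'' after which the reduction to a finite $\emptyset$-definable equivalence relation is automatic.
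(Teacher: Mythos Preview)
Your proposal is correct and matches the paper's own treatment essentially line for line: the paper likewise cites Kim for the small case on finite tuples, notes the extension to infinite tuples by compactness, and remarks parenthetically that in the $\omega$-categorical case the result is immediate because a $\emptyset$-type-definable equivalence relation is already $\emptyset$-definable, hence finite when bounded.
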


\subsection{Groups of automorphisms of $\omega$-categorical structures}\label{subsection: main subsection of the paper}

Throughout this subsection, $M$ is a countable,  $\omega$-categorical structure. Without loss of generality $M=M^{eq}$. Let $T=\Th(M)$. By $G$ we denote the group $\aut(M)$ of automorphisms of $M$, and we treat it as a topological group equipped with the pointwise convergence topology. Being a closed subgroup of $S_\infty$, it is a Polish group. In fact, all the closed subgroups of $S_\infty$ are precisely the groups of automorphisms of countable first order structures, and this class coincides with the class of all Polish groups possessing a (countable) basis of open neighborhoods of the identity consisting of open subgroups (see \cite[Theorem 1.5.1]{BeKe}).

Let $r_1 \colon \aut(M) \to \gal_{KP}(T)$ be given by $r_1(\sigma)=\sigma'/\autf_{KP}(\C)$ for any $\sigma' \in \aut(\C)$ extending $\sigma$, where $\C\succ M $ is a monster model. It is well-defined by the fact that automorphisms fixing a model are Lascar strong, and it is also a homomorphism. 

\begin{thm}\label{thm: r_1 is the Bohr compactification}
The function $r_1$ is surjective and it is the Bohr compactification of $G$.
\end{thm}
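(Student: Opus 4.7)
The plan is to verify that $r_1$ is continuous, surjective, and enjoys the universal property of the topological Bohr compactification. Continuity and surjectivity follow from identifying $\gal_{KP}(T)$ explicitly together with back-and-forth, while the universal property is the substantive step.

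Since $\omega$-categorical $T$ is small, Fact~\ref{fct: Kim's theorem} yields $\autf_{KP}(\C)=\autf_{Sh}(\C)=\aut(\C/\acl^{eq}(\emptyset))$, so $\gal_{KP}(T)$ is canonically the group of type-preserving permutations of $\acl^{eq}(\emptyset)$, with its logic topology coinciding with the pointwise-convergence topology (using Fact~\ref{fct: characterization of topology on Gal_L(T)}). As $M=M^{eq}$ and $T$ is $\omega$-categorical, $\acl^{eq}(\emptyset)\subseteq M$ and each of its elements has finite $\aut(M)$-orbit. Under this identification $r_1$ becomes the restriction map $\sigma\mapsto\sigma|_{\acl^{eq}(\emptyset)}$, which is continuous (the pullback of the basic neighborhood $\{\tau:\tau|_A=\mathrm{id}\}$ for finite $A\subseteq\acl^{eq}(\emptyset)$ is the open subgroup $\Fix(A)\leq G$) and surjective (by back-and-forth in the $\omega$-categorical structure $M$, every type-preserving permutation of $\acl^{eq}(\emptyset)$ extends to an automorphism of $M$).

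For the universal property, let $f\colon G\to K$ be a continuous homomorphism into a compact Hausdorff group with dense image. By the Peter--Weyl theorem, write $K=\varprojlim K_i$ as an inverse limit of compact Lie groups; it then suffices to factor each composite $f_i\colon G\to K_i$ through $r_1$. Since compact Lie groups have no small subgroups, some neighborhood $V$ of $e\in K_i$ contains only the trivial subgroup; as $G$ has a basis of open subgroups at $e$, the preimage $f_i^{-1}(V)$ contains an open subgroup $H\leq G$, and $f_i(H)\subseteq V$ is a subgroup of $K_i$, hence trivial. So $\ker f_i$ is open, $G/\ker f_i$ is discrete, and being dense in compact $K_i$ must be finite, making $\ker f_i$ an open normal subgroup of $G$ of finite index.

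The crux of the argument, and the main obstacle, is to show that every open normal subgroup of $G=\aut(M)$ of finite index contains $\Fix(\acl^{eq}(\emptyset))=\ker r_1$. This is where $\omega$-categoricity enters in its strongest form: by Ryll--Nardzewski, $\aut(M)$ acts oligomorphically on $M$, and a standard argument (to be formalized) shows that any continuous action of $G$ on a finite set factors through the restriction $G\to\aut(\acl^{eq}(\emptyset))$; equivalently, the open subgroups of finite index in $G$ are pullbacks of pointwise stabilizers of finite subsets of $\acl^{eq}(\emptyset)$. Granting this, each $f_i$ factors as $G\xrightarrow{r_1}\gal_{KP}(T)\to K_i$, and passing to the inverse limit yields a continuous $g\colon\gal_{KP}(T)\to K$ with $f=g\circ r_1$. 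Combined with the surjectivity established earlier, this exhibits $r_1$ as the Bohr compactification of $G$.
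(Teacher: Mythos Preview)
Your route is quite different from the paper's. The paper never invokes Peter--Weyl or the no-small-subgroups property; instead it works inside the auxiliary structure ${\mathcal M}=(M,G,\dots)$, uses the model-theoretic description of the Bohr compactification as $G^{**}/{G^{**}}^{00}_{\topo}$ (Fact~\ref{fac: from GPP}), and builds an explicit isomorphism $\theta\colon\gal_{KP}(T)\to G^{**}/{G^{**}}^{00}_{\topo}$ commuting with $r_1$ and the quotient map $r_2$. Theorem~\ref{thm: r_1 is the Bohr compactification} then drops out of the fact that $r_2$ is already known to be the Bohr compactification. The payoff of the paper's approach is that $\theta$ (and its companion $\rho$ for ${G^{**}}^{000}_{\topo}$) is exactly what is needed for the main theorem of the section; your argument, even if completed, would prove the statement in isolation without producing $\theta$.

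There is a genuine error in your reduction. You write that $G/\ker f_i$ ``is discrete, and being dense in compact $K_i$ must be finite''. This inference is false: the quotient topology on $G/\ker f_i$ is discrete, but that says nothing about the subspace topology on $f_i(G)\subseteq K_i$; e.g.\ $\mathbb Z$ maps densely into $S^1$. The conclusion you want is nevertheless true, and this is precisely where $\omega$-categoricity must be used (not just that $G$ has a basis of open subgroups): $\aut(M)$ is Roelcke precompact, since $\Fix(\bar a)\backslash G/\Fix(\bar a)$ is in bijection with the finitely many types $\tp(\bar b/\bar a)$ for $\bar b\equiv\bar a$; Roelcke precompactness passes to quotients; and a discrete Roelcke precompact group is finite. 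So $\ker f_i$ does have finite index, but for a different reason than the one you gave.

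You also explicitly leave unproved the step you call the crux: that any open normal $N\trianglelefteq G$ of finite index contains $\Fix(\acl^{eq}(\emptyset))$. This is not hard once stated, but it should be written out. Choose finite $\bar a$ with $\Fix(\bar a)\subseteq N$; by normality of $N$ the $N$-orbit equivalence relation on the (type-definable, hence $\emptyset$-definable) set $G\bar a$ is $G$-invariant, so $\emptyset$-definable by Ryll--Nardzewski, and has $[G:N]<\infty$ classes; thus $\bar a/E\in\acl^{eq}(\emptyset)$ and its stabilizer is exactly $N$, giving $\Fix(\acl^{eq}(\emptyset))\subseteq N$. With these two repairs your argument goes through, but as written it contains one wrong step and one acknowledged gap.
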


From a series of remarks and lemmas we will conclude this theorem (see Corollary \ref{cor: proof of theorem 4.5}), and, more importantly, we will find the desired isomorphism $\theta$ from the diagram in the introduction. Then we will use a similar method to find $\rho$.

\begin{rem}\label{rem: adding acl eq}
Let $T_{\acl^{eq}(\emptyset)}$ be the elementary diagram of $\acl^{eq}(\emptyset)$. Then $T_{\acl^{eq}(\emptyset)}$ is $\omega$-categorical.
\end{rem}

\begin{proof}
$E_{Sh}$ on tuples of length $n \in \omega$ is invariant over $\emptyset$, so it is $\emptyset$-definable. But it is bounded, so it has finitely many classes. Hence, for every $n \in \omega$ there are only finitely many types in $S_n(\acl^{eq}(\emptyset))$. Thus, $T_{\acl^{eq}(\emptyset)}$ is $\omega$-categorical.
\end{proof}



By Fact \ref{fct: Kim's theorem}, $\autf_{KP}(\C)=\autf_{Sh}(\C)$, so $\gal_{KP}(T)=\gal_{Sh}(T)$ which can be naturally identified with the group of all elementary permutations of $\acl^{eq}(\emptyset)$. 

\begin{rem}\label{rem: r_1 is onto}
$r_1$ is onto.
\end{rem}

\begin{proof}
By the above comment, it is enough to show that every elementary permutation of $\acl^{eq}(\emptyset)$ can be realized by an automorphism of $M$. But this follows immediately from Remark \ref{rem: adding acl eq}.
\end{proof}


\begin{rem}\label{rem: r_1 is continuous}
$r_1$ is continuous.
\end{rem}

\begin{proof}
Let $D \subseteq \gal_{KP}(T)$ be closed. By Fact \ref{fct: characterization of topology on Gal_L(T)}, $r_1^{-1}[D]=\{ \sigma \in \aut(M) : \sigma(\bar m) \models \pi(\bar x)\}$, where $\bar m$ is an enumeration of $M$ and $\pi(\bar x)$ is a partial type, and we see that this set is closed in the pointwise convergence topology.
\end{proof}


Consider the new structure ${\mathcal M}$ consisting of the structure $M$ together with the group $G=\aut(M)$ acting on $M$, expanded by predicates 
for all open subsets of $G$.
For convenience (to avoid some density arguments) we will work with two monster models 
$${\mathcal M}^{**}=(M^{**},G^{**},\dots) \succ {\mathcal M}^*=(M^*,G^*,\dots) \succ {\mathcal M}=(M,G,\dots)$$ 
such that ${\mathcal M}^{**}$ is a monster model with respect to ${\mathcal M}^*$, and $M^{**} \succ M^* \succ M$ are monster models of the original theory. Let the above $\C$ be equal to $M^*$; $\mu$ will denote the infinitesimals of $G$ computed in ${\mathcal M}^{**}$.

It is clear that each element $\sigma$ of $G^*$ [or of $G^{**}$] induces an automorphism $\overline{\sigma} \in \aut(M^*)$ [or $\overline{\sigma} \in \aut(M^{**})$, respectively], and it is determined by this automorphism. But not every automorphism of $M^*$ [or of $M^{**}$] arises in this way. 

When a group $F$ acts on a space $X$ and $A\subseteq X$, by $\Fix_F(A)$ we will denote the pointwise stabilizer of $A$ in $F$. The next remark is obvious.

\begin{rem}\label{rem: mu as stabilizer}
$\mu=\bigcap \{ \Fix_G(\bar a)^{**}: \bar a \;\, \mbox{finite tuple in}\; M\}= \Fix_{G^{**}}(M)$.
\end{rem}

Let $H$ be the group of all $\sigma \in G^{**}$ which fix $M^*$ setwise (hence induce an automorphism of $M^*$). Then $H \leq G^{**}$. Put $\widetilde{H}:=\{ \sigma |_{M^*}: \sigma \in H\} \leq \aut(M^*)$.

\begin{lem}\label{lem: H gives aut(M^*)}
$\widetilde{H}=\aut(M^*)$.
\end{lem}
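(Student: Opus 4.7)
The inclusion $\widetilde{H} \subseteq \aut(M^*)$ is immediate: if $\sigma \in G^{**}$ fixes $M^*$ setwise then $\sigma|_{M^*}$ is an elementary permutation of $M^*$. So the content lies in the reverse inclusion. My strategy is: given $\tau \in \aut(M^*)$, realize the partial type
\[
\Sigma(x) \;:=\; \{x \in G\} \cup \{x(m) = \tau(m) : m \in M^*\}
\]
(in the $G$-sort over the parameter set $M^*$) inside $\mathcal{M}^{**}$; any realization $\sigma$ automatically satisfies $\sigma|_{M^*} = \tau$ as a function on $M^*$, and since $\tau$ is a bijection $M^* \to M^*$, such $\sigma$ must fix $M^*$ setwise, hence $\sigma \in H$ and $\widetilde{\sigma} = \tau$.

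First I would verify finite satisfiability of $\Sigma(x)$ in $\mathcal{M}^*$. This is where $\omega$-categoricity enters. By Ryll--Nardzewski, every complete $n$-type of $T$ over $\emptyset$ is isolated by a formula, and $M$ is $\omega$-homogeneous; combining these two facts, for each formula $\varphi(\bar x)$ over $\emptyset$ that isolates a complete type, the sentence
\[
\forall \bar x\, \forall \bar y \,\bigl( \varphi(\bar x) \wedge \varphi(\bar y) \;\to\; \exists g\in G\; g(\bar x)=\bar y \bigr)
\]
holds in $\mathcal{M}$. This is a first-order sentence in the language of $\mathcal{M}$ (it quantifies over the $M$-sort and the $G$-sort, and uses only the action relation), so by elementarity it holds in $\mathcal{M}^*$. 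Now, given finitely many $m_1,\dots,m_n \in M^*$, let $\varphi$ be the formula isolating $\tp(\bar m/\emptyset)$; because $\tau \in \aut(M^*)$ preserves types over $\emptyset$, we have $\varphi(\tau(\bar m))$, so by the transferred sentence there is $\sigma_0 \in G^*$ with $\sigma_0(\bar m) = \tau(\bar m)$, witnessing the desired finite fragment of $\Sigma(x)$.

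Second, by saturation of $\mathcal{M}^{**}$ over $\mathcal{M}^*$, the partial type $\Sigma(x)$ (which lives over the small parameter set $M^*$) is realized by some $\sigma \in G^{**}$, completing the argument.

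I do not expect a serious obstacle. The only subtle point is checking that the transitivity statement above is genuinely first-order in the full language of $\mathcal{M}$: this relies on having the action of $G$ on $M$ as part of the structure, and on the $\omega$-categoricity of $T$ rendering each complete type over $\emptyset$ equivalent to a single formula. If one instead only had a partial type isolating the complete type, transitivity would not be expressible first-order and finite satisfiability would fail; it is precisely $\omega$-categoricity that makes the argument go through.
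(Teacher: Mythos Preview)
Your proof is correct and follows essentially the same route as the paper: reduce to finite tuples via saturation of $\mathcal{M}^{**}$ over $M^*$, use $\omega$-categoricity to express ``$\bar x \equiv_\emptyset \bar y$'' by a single formula so that transitivity of $G$ on $n$-types becomes a first-order sentence of $\mathcal{M}$, and then transfer this sentence to $\mathcal{M}^*$ to find the required witness in $G^*$. The only cosmetic difference is that you phrase the reduction explicitly as realizing the partial type $\Sigma(x)$ and you spell out why the realization fixes $M^*$ setwise, whereas the paper leaves this implicit.
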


\begin{proof}
Take any $\sigma \in \aut(M^*)$. We need to show that $\sigma \in \widetilde{H}$. By $|M^*|^{+}$-saturation of ${\mathcal M}^{**}$, it is enough to show that for any finite tuple $\bar a$ from $M^*$ there is $\tau \in G^{**}$ such that $\sigma(\bar a)=\tau(\bar a)$. So consider such a tuple $\bar a$ of length $n$.

By $\omega$-categoricity, the condition $\bar x \equiv_\emptyset \bar y$ is $\emptyset$-definable in the original theory and
$${\mathcal M} \models (\forall \bar x, \bar y \in M^n)(\bar x \equiv_{\emptyset} \bar y \rightarrow (\exists f \in G)( f(\bar x)=\bar y)).$$
So the same holds in ${\mathcal M}^*$, so we have
$$ (\forall \bar x, \bar y \in {M^*}^n)(\bar x \equiv_{\emptyset} \bar y \rightarrow (\exists f \in G^*)( f(\bar x)=\bar y)).$$
Since $\bar a \equiv_{\emptyset} \sigma(\bar a)$, we conclude that there is $\tau \in G^* \leq G^{**}$ such that $\sigma(\bar a)=\tau(\bar a)$.
\end{proof}

Put $\widetilde{H}^{00}_{\topo}:=\{ \sigma |_{M^*} : \sigma \in H \cap {G^{**}}^{00}_{\topo}\} \leq \aut(M^*)$. We would like to stress that $\widetilde{H}^{00}_{\topo}$ is a local notation which should not be confused with the notation from Section \ref{section: dynamics of top groups}. (Note that $\widetilde{H}=\aut(M^*)$ is a topological group, but it is not saturated).

\begin{lem}\label{lem: closedness and boundedness}
$\widetilde{H}^{00}_{\topo}$ is a closed (in the pointwise convergence topology on $\aut(M^*)$), normal, bounded index (i.e. smaller than the degree of saturation of $M^*$) subgroup of $\aut(M^*)$.
\end{lem}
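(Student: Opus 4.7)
The plan is to verify the three assertions separately. Normality and bounded index are essentially formal consequences of the corresponding properties of ${G^{**}}^{00}_{\topo}$ inside $G^{**}$, combined with the surjectivity of the restriction map $\pi\colon H \to \aut(M^*)$ provided by Lemma \ref{lem: H gives aut(M^*)}. Concretely: ${G^{**}}^{00}_{\topo}$ is normal in $G^{**}$ by Fact \ref{fac: from GPP}(i), so $H \cap {G^{**}}^{00}_{\topo}$ is normal in $H$, and its image $\widetilde{H}^{00}_{\topo}=\pi(H\cap {G^{**}}^{00}_{\topo})$ is normal in $\pi(H)=\aut(M^*)$. Similarly, $[G^{**}:{G^{**}}^{00}_{\topo}]$ is bounded (by $2^{|T|}$ in the language ${\mathcal L}'$), hence so is $[H:H\cap{G^{**}}^{00}_{\topo}]$, and applying $\pi$ gives that $[\aut(M^*):\widetilde{H}^{00}_{\topo}]$ is bounded; this bound is well below the saturation degree of $M^*$.

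The substantive content is closedness. I would prove the following characterization:
$$\sigma \in \widetilde{H}^{00}_{\topo} \;\iff\; (\forall \text{ finite } \bar a \subseteq M^*)(\exists \tau \in {G^{**}}^{00}_{\topo})\,(\tau(\bar a) = \sigma(\bar a)).$$
The ``$\Rightarrow$'' direction is immediate by taking a lift of $\sigma$ to $H\cap {G^{**}}^{00}_{\topo}$. For ``$\Leftarrow$'', recall that ${G^{**}}^{00}_{\topo}$ is type-definable over $\emptyset$ in the language of ${\mathcal M}^{**}$: by definition it is the solution set of a partial type $\pi(x)$ in the variable $x$ from the $G$-sort, consisting of conditions of the form ``$x\in U^{**}$'' for $U$ open in $G$. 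Consider the partial type
$$\Pi(x) \;:=\; \pi(x) \,\cup\, \{\, x(a)=\sigma(a) : a \in M^* \,\}$$
in $x$, whose parameters $a,\sigma(a)$ lie in $M^*$ and whose size is at most $|M^*|$. The hypothesis of ``$\Leftarrow$'' says precisely that $\Pi$ is finitely satisfiable in $G^{**}$. Since ${\mathcal M}^{**}$ is a monster model with respect to ${\mathcal M}^*$, $\Pi$ is realized by some $\tau \in G^{**}$. Then $\tau \in {G^{**}}^{00}_{\topo}$ and $\tau|_{M^*}=\sigma$; in particular $\tau$ fixes $M^*$ setwise, so $\tau \in H\cap {G^{**}}^{00}_{\topo}$ and $\sigma=\tau|_{M^*} \in \widetilde{H}^{00}_{\topo}$.

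Closedness of $\widetilde{H}^{00}_{\topo}$ in the pointwise convergence topology on $\aut(M^*)$ then follows by contrapositive: if $\sigma \in \aut(M^*)$ is such that every basic open neighborhood $\{\tau \in \aut(M^*) : \tau(\bar a)=\sigma(\bar a)\}$ (for $\bar a$ a finite tuple in $M^*$) meets $\widetilde{H}^{00}_{\topo}$, then choosing $\sigma'_{\bar a} \in \widetilde{H}^{00}_{\topo}$ with $\sigma'_{\bar a}(\bar a)=\sigma(\bar a)$ and lifting to some $\tau_{\bar a} \in H\cap {G^{**}}^{00}_{\topo}$ yields the condition of the characterization, giving $\sigma \in \widetilde{H}^{00}_{\topo}$. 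The main conceptual point --- and the reason two levels of monster models were set up --- is precisely to have the saturation of ${\mathcal M}^{**}$ over ${\mathcal M}^*$ available to realize the $|M^*|$-sized type $\Pi$; I expect no technical obstacle beyond checking that the conditions ``$x(a)=\sigma(a)$'' are indeed formulas of the ambient language (they are, via the predicate interpreting the action of $G$ on $M$).
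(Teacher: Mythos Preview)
Your proof is correct, and the normality and bounded-index parts match the paper's argument essentially verbatim. The closedness argument, however, is genuinely different from the paper's.

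The paper argues the contrapositive directly: given $\sigma \notin \widetilde{H}^{00}_{\topo}$, it lifts $\sigma$ to $\sigma'\in H\setminus {G^{**}}^{00}_{\topo}$ and uses that $\mu\le {G^{**}}^{00}_{\topo}$ together with Remark~\ref{rem: mu as stabilizer} (and a short compactness argument over $M\cup\{\sigma'\}$) to find a finite tuple $\bar a$ \emph{from $M$} such that $\sigma'\notin {G^{**}}^{00}_{\topo}\cdot \Fix_G(\bar a)^{**}$; it then checks by hand that the basic neighborhood $\sigma\cdot\Fix_{\aut(M^*)}(\bar a)$ misses $\widetilde{H}^{00}_{\topo}$. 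So the paper only needs a small amount of saturation and produces a separating neighborhood coming from the coarser basis indexed by finite tuples of the original model $M$.

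Your route instead establishes a membership characterization via finite tuples from $M^*$, and uses the full $|M^*|^+$-saturation of ${\mathcal M}^{**}$ to realize the $|M^*|$-sized type $\Pi$. This is exactly what the two-level monster-model setup was designed to allow, and it yields a cleaner, more conceptual proof. The paper's version, by contrast, extracts a sharper conclusion (the separating tuple can be taken in $M$) at the cost of a slightly more hands-on contradiction argument. Both are perfectly valid; yours is arguably more transparent, while the paper's uses less saturation and highlights the role of the description $\mu=\bigcap_{\bar a}\Fix_G(\bar a)^{**}$.
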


\begin{proof}
Normality follows from Lemma \ref{lem: H gives aut(M^*)} and the normality of ${G^{**}}^{00}_{\topo}$ in $G^{**}$. Since ${G^{**}}^{00}_{\topo}$ has bounded index in $G^{**}$, this index is at most $2^{|T'|}$, where $T'$ is the theory of ${\mathcal M}$, which in turn is smaller that the degree of saturation of $M^*$. Thus, the index of $\widetilde{H}^{00}_{\topo}$ in $\aut(M^*)$ is bounded by Lemma  \ref{lem: H gives aut(M^*)}.

It remains to check closedness. Consider any $\sigma \in \aut(M^*) \setminus \widetilde{H}^{00}_{\topo}$. We will show that $\sigma \notin \cl(\widetilde{H}^{00}_{\topo})$. For this we need to find an open neighborhood of $\sigma$ disjoint from $\widetilde{H}^{00}_{\topo}$. Take $\sigma' \in H$ such that $\sigma' |_{M^*} =\sigma$. Then $\sigma' \notin {G^{**}}^{00}_{\topo}$. So, by Remark \ref{rem: mu as stabilizer}, there is a finite tuple $\bar a$ in $M$ such that $\sigma' \notin {G^{**}}^{00}_{\topo} \cdot \Fix_{G}(\bar a)^{**}$. We claim that $\sigma \cdot \Fix_{\aut(M^*)}(\bar a) \cap \widetilde{H}^{00}_{\topo}=\emptyset$, which clearly completes our proof.

Suppose for a contradiction that $\sigma \tau =\eta$ for some $\tau \in \Fix_{\aut(M^*)}(\bar a)$ and $\eta \in \widetilde{H}^{00}_{\topo}$. By Lemma \ref{lem: H gives aut(M^*)}, $\tau=\tau'|_{M^*}$ for some $\tau' \in H$; then $\tau' \in \Fix_G(\bar a)^{**} \cap H$. We also have $\eta = \eta'|_{M^*}$ for some $\eta' \in  H \cap {G^{**}}^{00}_{\topo}$. Then $(\eta'^{-1}\sigma'\tau')|_{M^*}= \id_{M^*}$, and so, by Remark \ref{rem: mu as stabilizer}, $\eta'^{-1}\sigma'\tau' \in H \cap {G^{**}}^{00}_{\topo}$. We conclude that  $\sigma' \in {G^{**}}^{00}_{\topo} \cdot \Fix_G(\bar a)^{**}$, a contradiction.
\end{proof}

\begin{cor}\label{cor: important inclusion}
$\autf_{Sh}(M^*) \leq \widetilde{H}^{00}_{\topo}$.
\end{cor}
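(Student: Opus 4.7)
The plan is to reduce the statement to a local condition by exploiting the closedness of $\widetilde{H}^{00}_{\topo}$ established in Lemma \ref{lem: closedness and boundedness}. Inspecting that lemma's closedness proof shows something slightly stronger than mere closure in the pointwise convergence topology: an element $\sigma\in\aut(M^*)$ lies in $\widetilde{H}^{00}_{\topo}$ if and only if, for every finite tuple $\bar a$ in $M$ (not merely in $M^*$), the basic open set $\sigma\cdot\Fix_{\aut(M^*)}(\bar a)$ meets $\widetilde{H}^{00}_{\topo}$. Moreover, since $K=\ker(H\twoheadrightarrow\aut(M^*))$ is contained in $\mu\subseteq {G^{**}}^{00}_{\topo}$, this condition is equivalent to the purely existential statement that there is a lift $\tilde\tau\in H\cap {G^{**}}^{00}_{\topo}$ with $\tilde\tau(\bar a)=\sigma(\bar a)$. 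Thus the goal reduces to producing, for each $\sigma\in\autf_{Sh}(M^*)$ and each finite $\bar a\subseteq M$, such a $\tilde\tau$.

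For the orbit step, I would observe that $\sigma(\bar a)\equiv_{\acl^{eq}(\emptyset)}\bar a$ by definition of $\autf_{Sh}(M^*)$, and hence, by Fact \ref{fct: Kim's theorem}, $\sigma(\bar a)$ and $\bar a$ share the same Kim-Pillay strong type. Invoking the classical identification of ${G^{**}}^{00}_{\emptyset}$-orbits on the monster with Kim-Pillay strong type classes (on affine sorts, as recalled in the introduction), one obtains some $t\in {G^{**}}^{00}_{\emptyset}\subseteq {G^{**}}^{00}_{\topo}$ with $t(\bar a)=\sigma(\bar a)$. Note that $t$ need not lie in $H$, so the construction is not yet finished.

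To land the lift inside $H$, the plan is to correct $t$ by multiplication on the right by a suitable element $h\in\mu$. Since $\bar a\subseteq M$ is fixed pointwise by $\mu$, any such correction preserves the value $t(h(\bar a))=t(\bar a)=\sigma(\bar a)$; and since $\mu\subseteq {G^{**}}^{00}_{\topo}$, the product $th$ remains in ${G^{**}}^{00}_{\topo}$. The required condition $th\in H$ becomes $h(M^*)=t^{-1}(M^*)$, so the task is to show that $t^{-1}(M^*)$ lies in the $\mu$-orbit of $M^*$ inside ${\mathcal M}^{**}$. This is the main obstacle: it amounts to a homogeneity/transitivity statement about $\mu=\Fix_{G^{**}}(M)$ acting on the set of elementary substructures of ${\mathcal M}^{**}$ that contain $M$. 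I expect this to follow from the strong homogeneity of ${\mathcal M}^{**}$ over $M$ together with the density of $G$ in $H$, which allows one to realize any abstract elementary bijection between such substructures by an element of $G^{**}$ fixing $M$ pointwise; but verifying this carefully, and in particular matching the "external" homogeneity data with the "internal" definable group $G^{**}$, is the delicate point on which the argument turns.
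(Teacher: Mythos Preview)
Your proposal has two genuine gaps, and the paper's argument sidesteps both by a much simpler route.

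First, the appeal to ``the classical identification of ${G^{**}}^{00}_{\emptyset}$-orbits on the monster with Kim-Pillay strong type classes'' is a misapplication. The passage in the introduction concerns a definable group $G$ acting on an \emph{affine copy of itself}, and the KP-types in question are on that affine sort, in the expanded theory. It says nothing about the action of $G^{**}$ on the sort $M^{**}$, and in particular it does not give you, from $\bar a \mathrel{E_{KP}} \sigma(\bar a)$ in the theory $T$ of $M$, an element $t\in{G^{**}}^{00}_{\emptyset}$ sending $\bar a$ to $\sigma(\bar a)$. Any such correspondence between ${G^{**}}^{00}_{\topo}$-orbits on $M^{**}$ and strong types in $T$ is essentially what this whole section is in the process of establishing (via the isomorphism $\theta$), so invoking it here is circular.

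Second, you yourself flag the real obstacle: correcting $t$ by some $h\in\mu$ so that $th\in H$, i.e.\ so that $th$ stabilizes $M^*$ setwise. You propose to show that $\mu=\Fix_{G^{**}}(M)$ acts transitively on the relevant elementary substructures, but you do not prove it, and there is no reason to expect the ``internal'' group $G^{**}$ to realize arbitrary elementary maps over $M$ between copies of $M^*$ inside $M^{**}$; Lemma~\ref{lem: H gives aut(M^*)} only tells you that $H$ surjects onto $\aut(M^*)$, which is a much weaker statement.

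The paper's proof avoids both problems by never leaving $\aut(M^*)$. Lemma~\ref{lem: closedness and boundedness} already tells you that $\widetilde{H}^{00}_{\topo}$ is a closed, normal, bounded-index subgroup of $\aut(M^*)$. Hence its orbit equivalence relation $E$ on any finite tuple $\bar a$ of $M^*$ is bounded and invariant \emph{in the theory $T$}; by $\omega$-categoricity $E$ is $\emptyset$-definable with finitely many classes, so any $f\in\autf_{Sh}(M^*)$ fixes $\bar a/E$, i.e.\ $f(\bar a)=\sigma(\bar a)$ for some $\sigma\in\widetilde{H}^{00}_{\topo}$. Closedness then finishes. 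No lift to $G^{**}$, no correction by $\mu$, no transitivity claim is needed.
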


\begin{proof}
By Lemma \ref{lem: closedness and boundedness} (more precisely, by closedness of $\widetilde{H}^{00}_{\topo}$), it is enough to show that $\autf_{Sh}(M^*) \leq \cl(\widetilde{H}^{00}_{\topo})$. For this consider any $f \in \autf_{Sh}(M^*)$ and take any finite tuple $\bar a$ in $M^*$. Then consider the orbit equivalence relation $E$ of $\widetilde{H}^{00}_{\topo}$ on the sort of $\bar a$ in $M^*$. By Lemma \ref{lem: closedness and boundedness}, $E$ is a bounded, invariant equivalence relation. So it is $\emptyset$-definable (by $\omega$-categoricity) and finite. Hence, $f$ fixes $\bar a /E$, so $f(\bar a)=\sigma(\bar a)$ for some $\sigma \in  \widetilde{H}^{00}_{\topo}$.
\end{proof}

Define $\theta' \colon \aut(M^*) \to G^{**}/{G^{**}}^{00}_{\topo}$ by $\theta'(\sigma)=\sigma'/{G^{**}}^{00}_{\topo}$ for some [any] $\sigma' \in H$ such that $\sigma'|_{M^*}=\sigma$. The existence of such a $\sigma'$ is guaranteed by Lemma \ref{lem: H gives aut(M^*)}; the fact that $\sigma'/{G^{**}}^{00}_{\topo}$ does not depend on the choice of $\sigma' \in H$ such that $\sigma'|_{M^*}=\sigma$ follows from Remark \ref{rem: mu as stabilizer} (namely, $\Fix_{G^{**}}(M^*) \leq \mu \leq {G^{**}}^{00}_{\topo}$).

An easy computation shows that $\theta'$ is a group homomorphism. By Corollary \ref{cor: important inclusion}, $\theta'$ factors through $\autf_{Sh}(M^*)$, so we get the induced homomorphism 
$$\theta \colon \aut(M^*)/\autf_{Sh}(M^*) \to G^{**}/{G^{**}}^{00}_{\topo},$$ 
and we will see that this is the isomorphism that we are looking for (note that $G^{**}/{G^{**}}^{00}_{\topo}$ is naturally identified with $G^{*}/{G^{*}}^{00}_{\topo}$). 

Let $r_2 \colon G \to G^{**}/{G^{**}}^{00}_{\topo}$ be the quotient map. 

\begin{lem}
The following diagram commutes.
\begin{figure}[H]
		\centering
		\begin{tikzcd}
			& \gal_{KP}(T)\arrow[dd,"\theta"]\\
		     G\arrow[ur,"r_1"]\arrow[dr,"r_2"] & \\
			& G^{**}/{G^{**}}^{00}_{\topo}. 	
		\end{tikzcd}
	\end{figure}
\end{lem}

\begin{proof}
Take any $\sigma \in G$.  Then $\sigma \in G^{**}$ and $r_2(\sigma)=\sigma /{G^{**}}^{00}_{\topo}$.
Also $\sigma \in G^*$, and let $\bar \sigma$ be $\sigma$ treated as an element of $\aut(M^*)$. Then $r_1(\sigma)=\bar \sigma / \autf_{Sh}(M^*)$.
Finally, we see that $\sigma \in H$ and $\sigma |_{M^*}=\bar \sigma$, so $\theta (\bar \sigma / \autf_{Sh}(M^*))=\sigma /{G^{**}}^{00}_{\topo}$.
\end{proof}

Note that $\ker(r_1)=\autf_{Sh}(M):=\aut(M/\acl^{eq}(\emptyset))$. So, by Remark \ref{rem: r_1 is onto}, $r_1$ induces a group isomorphism $r \colon \aut(M)/\autf_{Sh}(M) \to \gal_{KP}(T)$. So if $q \colon G \to \aut(M)/\autf_{Sh}(M)$ is the quotient map, the following diagram commutes.
\begin{figure}[H]
		\centering
		\begin{tikzcd}
			& \aut(M)/\autf_{Sh}(M)\arrow[dd,two heads,"r"]\\
		     G\arrow[ur,two heads,"q"]\arrow[dr,two heads,"r_1"] & \\
			& \gal_{KP}(T). 	
		\end{tikzcd}
	\end{figure}

\begin{lem}\label{lem: r is a homeomorphism}
$r$ is a homeomorphism (so topological isomorphism). Thus, the topology on $\gal_{KP}(T)$ is the quotient topology induced by $r_1$.
\end{lem}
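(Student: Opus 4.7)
The plan is to prove that $r_1\colon G \to \gal_{KP}(T)$ is a quotient map; combined with $r_1 = r\circ q$ (where $q$ is the quotient map $G \to G/\autf_{Sh}(M)$), this forces $r$ to be a homeomorphism and simultaneously identifies the logic topology on $\gal_{KP}(T)$ with the quotient topology induced by $r_1$. Continuity of $r_1$ is Remark \ref{rem: r_1 is continuous} and surjectivity is Remark \ref{rem: r_1 is onto}, so the content is to show that $r_1$ is \emph{open}.

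Since $r_1$ is a group homomorphism, openness reduces to showing that $r_1[\Fix_G(\bar a)]$ is open in $\gal_{KP}(T)$ for every finite tuple $\bar a$ from $M$: every open subset of $G$ is a union of left cosets of subgroups of the form $\Fix_G(\bar a)$, and $r_1$ commutes with left translation. I will exhibit a concrete open subgroup of $\gal_{KP}(T)$ contained in $r_1[\Fix_G(\bar a)]$.

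Using $\omega$-categoricity, the Shelah equivalence relation $E_{Sh}$ on the sort of $\bar a$ is $\emptyset$-definable with finitely many classes, so its canonical parameter $b := \ulcorner \bar a/E_{Sh} \urcorner$ lies in $\acl^{eq}(\emptyset)$ and has only finitely many $\aut(\C)$-conjugates $b = b_1,\dots,b_k$. By Fact \ref{fct: Kim's theorem}, $\autf_{KP}(\C) = \autf_{Sh}(\C)$ fixes $b$, so $S := \{\sigma\autf_{KP}(\C) \in \gal_{KP}(T) : \sigma(b) = b\}$ is a well-defined finite-index subgroup. Its complement pulls back to $\aut(\C)$ as $\{\sigma : \sigma(b) \models x = b_2 \lor \cdots \lor x = b_k\}$, so by Fact \ref{fct: characterization of topology on Gal_L(T)}(iii) and the quotient definition of the logic topology on $\gal_{KP}(T)$, $S$ is open. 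For the inclusion $S \subseteq r_1[\Fix_G(\bar a)]$, take $\tau \in S$; surjectivity of $r_1$ provides $\sigma_0 \in G$ with $r_1(\sigma_0) = \tau$, so $\sigma_0(b) = b$, and thus $\sigma_0(\bar a)$ and $\bar a$ realize the same type over $\acl^{eq}(\emptyset)$.

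The one delicate point I anticipate is the final correction step: finding $\eta \in \autf_{Sh}(M) = \aut(M/\acl^{eq}(\emptyset))$ with $\eta(\sigma_0(\bar a)) = \bar a$ that fixes the (generally infinite) set $\acl^{eq}(\emptyset)$ pointwise. A naive appeal to $\omega$-saturation of $M$ would only handle finite parameter sets, but Remark \ref{rem: adding acl eq} closes the gap: since $T_{\acl^{eq}(\emptyset)}$ is itself $\omega$-categorical, the expansion $(M,c)_{c\in\acl^{eq}(\emptyset)}$ is $\omega$-homogeneous, which produces the required $\eta$. Then $\eta\sigma_0 \in \Fix_G(\bar a)$ with $r_1(\eta\sigma_0) = \tau$, completing the inclusion $S \subseteq r_1[\Fix_G(\bar a)]$ and hence establishing the lemma.
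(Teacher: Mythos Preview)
Your proof is correct, but it takes a genuinely different route from the paper. The paper's argument is a one-liner using descriptive set theory: since $\autf_{Sh}(M)$ is closed in $G$, the quotient $G/\autf_{Sh}(M)$ is Polish; since the language is countable, $\gal_{KP}(T)$ is also Polish; and then $r$ is a continuous bijective homomorphism between Polish groups, hence a homeomorphism by the open mapping theorem for Polish groups (they cite \cite[Theorem 1.2.6]{BeKe}). Your argument, by contrast, is entirely explicit and elementary: you compute $r_1[\Fix_G(\bar a)]$ directly as the stabilizer $S$ of the Shelah class $\bar a/E_{Sh}$ inside $\gal_{KP}(T)$, and verify that $S$ is open from the definition of the logic topology. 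What the paper's approach buys is brevity and a clean separation of concerns (the model-theoretic content is all in continuity and surjectivity of $r_1$); what your approach buys is that it avoids any appeal to Baire category or automatic continuity, and in fact gives the sharper information that $r_1[\Fix_G(\bar a)]$ equals $S$ exactly. Your use of Remark~\ref{rem: adding acl eq} to produce the correcting automorphism $\eta$ is the right move and mirrors how the paper uses that remark in the proof of Remark~\ref{rem: r_1 is onto}.
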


\begin{proof}
By Remark \ref{rem: r_1 is continuous}, $r_1$ is continuous. So $r$ is continuous with $\aut(M)/\autf_{Sh}(M)$ equipped with the quotient topology. But $\autf_{Sh}(M)$ is a closed subgroup of $\aut(M)$, so $\aut(M)/\autf_{Sh}(M)$ is a Polish group. Since $\gal_{KP}(T)$ is also Polish (because the language is countable as a part of the $\omega$-categoricity assumption), $r$ is a homeomorphism (see \cite[Theorem 1.2.6]{BeKe}). The rest is clear.
\end{proof}

\begin{cor}
$\theta$ is continuous.
\end{cor}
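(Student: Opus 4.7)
The plan is to chase the diagram that has already been set up, using that $r_1$ is a topological quotient map onto $\gal_{KP}(T)$.

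First, I would observe that $r_2 \colon G \to G^{**}/{G^{**}}^{00}_{\topo}$ is continuous. Indeed, $G^{**}/{G^{**}}^{00}_{\topo}$ with the logic topology is canonically identified, as a topological group, with $G^{*}/{G^{*}}^{00}_{\topo}$ (the component, and the logic topology on the quotient, do not depend on the choice of monster model), and Fact~\ref{fac: from GPP}(ii) says the quotient map $\pi\colon G\to G^{*}/{G^{*}}^{00}_{\topo}$ is the (continuous) Bohr compactification of $G$. Under the identification, $r_2$ is nothing but $\pi$.

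Second, I would invoke Lemma~\ref{lem: r is a homeomorphism} together with Remark~\ref{rem: r_1 is onto}. By Fact~\ref{fct: Galois groups are absolute} and the definition of $\theta$, we may identify $\aut(M^*)/\autf_{Sh}(M^*)$ with $\gal_{KP}(T)$ as a topological group, and under this identification the map $r_1$ coincides with the surjection $G\to \gal_{KP}(T)$ of Remark~\ref{rem: r_1 is onto}. Lemma~\ref{lem: r is a homeomorphism} then tells us that the topology on $\aut(M^*)/\autf_{Sh}(M^*)$ is precisely the quotient topology induced by $r_1$.

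Finally, I would conclude by the universal property of the quotient topology: for any open $V\subseteq G^{**}/{G^{**}}^{00}_{\topo}$, the commutativity $r_2=\theta\circ r_1$ gives
\[
r_1^{-1}[\theta^{-1}[V]]\;=\;r_2^{-1}[V],
\]
which is open in $G$ by continuity of $r_2$; hence $\theta^{-1}[V]$ is open in $\aut(M^*)/\autf_{Sh}(M^*)$, and $\theta$ is continuous. There is no real obstacle here: all the serious work has already been absorbed into Lemma~\ref{lem: r is a homeomorphism} (where the Polish-group input was used to upgrade the continuous surjection $r_1$ to a topological quotient) and into Fact~\ref{fac: from GPP}(ii) (which supplies continuity of $r_2$). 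The corollary is just the diagram chase that packages these two inputs.
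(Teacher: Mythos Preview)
Your proof is correct and follows the same approach as the paper's own proof, which simply says the result follows from the continuity of $r_2$ (via Fact~\ref{fac: from GPP}) and Lemma~\ref{lem: r is a homeomorphism}. You have spelled out in full the diagram chase that the paper leaves implicit, but the two key inputs and the logic are identical.
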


\begin{proof}
By Fact \ref{fac: from GPP}, we know that $r_2$ is the Bohr compactification of $G$, so it is continuous. Thus, we finish using Lemma \ref{lem: r is a homeomorphism} and the first diagram above.
\end{proof}

We finish the discussion of $\theta$ with the following corollary, which completes the proof of Theorem \ref{thm: r_1 is the Bohr compactification}.

\begin{cor}\label{cor: proof of theorem 4.5}
$\theta$ is a topological isomorphism, and the epimorphism $r_1$ is the Bohr compactification of $G$ (in particular, Theorem \ref{thm: r_1 is the Bohr compactification} is true).
\end{cor}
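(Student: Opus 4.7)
The plan is to deduce both claims at once from the universal property of the Bohr compactification $r_2\colon G \to G^{**}/{G^{**}}^{00}_{\topo}$ established in Fact \ref{fac: from GPP}(ii), together with the commuting diagram that appears just before Lemma \ref{lem: r is a homeomorphism}. Using the topological isomorphism $r$ from Lemma \ref{lem: r is a homeomorphism} to identify $\aut(M^*)/\autf_{Sh}(M^*)$ with $\gal_{KP}(T)$, I view $\theta$ as a continuous group homomorphism from the compact Hausdorff group $\gal_{KP}(T)$ to $G^{**}/{G^{**}}^{00}_{\topo}$, so that the two triangular diagrams preceding Corollary \ref{cor: proof of theorem 4.5} collapse to the single identity $\theta \circ r_1 = r_2$.

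First I would observe that $\theta$ is surjective: the image $\theta[\gal_{KP}(T)]$ is closed, as a continuous image of a compact space, and it contains $\theta[r_1[G]] = r_2[G]$, which is dense because $r_2$ is a compactification; so $\theta$ is onto. To get injectivity I apply the universal property of the Bohr compactification $r_2$ to the continuous group homomorphism $r_1\colon G \to \gal_{KP}(T)$ (continuity and surjectivity of $r_1$ come from Remarks \ref{rem: r_1 is onto} and \ref{rem: r_1 is continuous}). This produces a unique continuous group homomorphism $\psi\colon G^{**}/{G^{**}}^{00}_{\topo} \to \gal_{KP}(T)$ with $\psi \circ r_2 = r_1$. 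Then $\psi \circ \theta \circ r_1 = \psi \circ r_2 = r_1$, and surjectivity of $r_1$ forces $\psi \circ \theta = \id_{\gal_{KP}(T)}$; symmetrically, $\theta \circ \psi \circ r_2 = \theta \circ r_1 = r_2$, and the density of $r_2[G]$ inside the Hausdorff space $G^{**}/{G^{**}}^{00}_{\topo}$ forces $\theta \circ \psi = \id$. Hence $\theta$ and $\psi$ are mutually inverse continuous group isomorphisms, and in particular $\theta$ is a topological isomorphism.

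Finally, $r_1 = \psi \circ r_2$ is the composition of the Bohr compactification of $G$ with the topological isomorphism $\psi$, so $r_1$ is itself an incarnation of the Bohr compactification of $G$: it is a continuous surjective homomorphism onto a compact Hausdorff group, and it inherits the universal property from $r_2$, since any continuous homomorphism $f\colon G \to K$ into a compact Hausdorff group factors uniquely through $r_2$, and hence factors uniquely through $r_1$ via $\theta$. This simultaneously finishes the proof of Theorem \ref{thm: r_1 is the Bohr compactification}. There is no genuine obstacle in this argument; the only point that requires care is bookkeeping the identifications via $r$ and via the unlabelled corollary preceding Corollary \ref{cor: proof of theorem 4.5}, so that the equation $\theta \circ r_1 = r_2$ is verified on the nose. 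Once that is in place, the universal property of $r_2$ does all the work automatically, and no further model-theoretic analysis of $\widetilde{H}^{00}_{\topo}$ beyond the inclusion already supplied by Corollary \ref{cor: important inclusion} is needed.
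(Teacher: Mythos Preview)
Your argument is correct and follows essentially the same approach as the paper: the paper's proof is a one-liner citing that $r_2$ is the Bohr compactification, $r_1$ is a surjective compactification, and $\theta$ is a morphism from $r_1$ to $r_2$, leaving the standard universal-property argument (which you spell out in full) implicit. One small bookkeeping slip: the identification of $\aut(M^*)/\autf_{Sh}(M^*)$ with $\gal_{KP}(T)$ comes directly from Fact~\ref{fct: Kim's theorem} (since $\C = M^*$), not from the map $r$ of Lemma~\ref{lem: r is a homeomorphism}, which has domain $\aut(M)/\autf_{Sh}(M)$; this does not affect the argument.
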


\begin{proof}
This follows from the fact (Fact \ref{fac: from GPP}) that $r_2$ is the Bohr compactification of $G$, $r_1$ is a surjective compactification of $G$, and $\theta$ is a morphism from $r_1$ to $r_2$.
\end{proof}

In order to define the desired $\rho$, first define $\rho' \colon \aut(M^*) \to G^{**}/{G^{**}}^{000}_{\topo}$ by $\rho'(\sigma)=\sigma'/{G^{**}}^{000}_{\topo}$ for some [any] $\sigma' \in H$ such that $\sigma'|_{M^*}=\sigma$. As in the case of $\theta'$, the existence of such a $\sigma'$ is guaranteed by Lemma \ref{lem: H gives aut(M^*)}; the fact that $\sigma'/{G^{**}}^{000}_{\topo}$ does not depend on the choice of $\sigma' \in H$ such that $\sigma'|_{M^*}=\sigma$ follows from Remark \ref{rem: mu as stabilizer}. An easy computation shows that $\theta'$ is a group homomorphism. In order to factorize $\tau'$ through $\autf_L(M^*)$, we need to prove the following counterpart of Lemma \ref{cor: important inclusion}.

\begin{lem}
$\autf_L(M^*) \leq \widetilde{H}^{000}_{\topo}:= \{ \sigma |_{M^*} : \sigma \in H \cap {G^{**}}^{000}_{\topo}\} \leq \aut(M^*)$.
\end{lem}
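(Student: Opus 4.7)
The plan is to reduce to the statement $\aut(M^*/M) \subseteq \widetilde{H}^{000}_{\topo}$, and then to combine this with two structural observations: that $\widetilde{H}^{000}_{\topo}$ is a normal subgroup of $\aut(M^*)$, and that $\autf_L(M^*)$ is contained in the normal subgroup of $\aut(M^*)$ generated by $\aut(M^*/M)$. For normality, if $\sigma \in \widetilde{H}^{000}_{\topo}$ is witnessed by $\tilde\sigma \in H \cap {G^{**}}^{000}_{\topo}$ and $\tau \in \aut(M^*)$ is lifted to $\tilde\tau \in H$ via Lemma \ref{lem: H gives aut(M^*)}, then $\tilde\tau\tilde\sigma\tilde\tau^{-1}$ still lies in $H$ (a subgroup) and in the normal subgroup ${G^{**}}^{000}_{\topo}$ of $G^{**}$, and its restriction to $M^*$ equals $\tau\sigma\tau^{-1}$. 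For the reduction of generators, $\omega$-categoricity of $T$ forces $M$ to be atomic and hence prime among models of $T$, so $M$ embeds elementarily into every small $N \prec M^*$; by monster-model homogeneity any such embedding extends to some $\tau \in \aut(M^*)$ with $\tau(M) \subseteq N$, whence $\aut(M^*/N) \subseteq \aut(M^*/\tau(M)) = \tau\,\aut(M^*/M)\,\tau^{-1}$. Since $\autf_L(M^*)$ is generated by the union of $\aut(M^*/N)$ over small $N \prec M^*$, each of its elements is a product of $\aut(M^*)$-conjugates of elements of $\aut(M^*/M)$.

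The main lifting step is then immediate: given $\sigma \in \aut(M^*/M)$, apply Lemma \ref{lem: H gives aut(M^*)} to produce $\sigma' \in H$ with $\sigma'|_{M^*} = \sigma$. Because $\sigma$ fixes $M$ pointwise, so does $\sigma'$ acting on $M^{**}$, so Remark \ref{rem: mu as stabilizer} places $\sigma'$ inside $\Fix_{G^{**}}(M) = \mu$. By the very definition of ${G^{**}}^{000}_{\topo}$ as a bounded-index normal subgroup containing $\mu$, we get $\sigma' \in H \cap {G^{**}}^{000}_{\topo}$, and therefore $\sigma \in \widetilde{H}^{000}_{\topo}$. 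Combining this with normality of $\widetilde{H}^{000}_{\topo}$ and with the generation result above yields $\autf_L(M^*) \subseteq \widetilde{H}^{000}_{\topo}$.

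I do not anticipate a real obstacle in this outline. The only mildly nonformal ingredient is using $\omega$-categoricity to embed $M$ inside an arbitrary small $N \prec M^*$, which is what lets us reduce from general model-fixing automorphisms to those fixing $M$ itself; after that reduction, the lifting is tautological because the identity on $M$ automatically corresponds to an element of $\mu \subseteq {G^{**}}^{000}_{\topo}$.
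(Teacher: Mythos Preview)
Your proof is correct, but it takes a genuinely different route from the paper's. The paper argues via orbit equivalence relations: it first observes (as you do) that $\widetilde{H}^{000}_{\topo}$ is normal in $\aut(M^*)$, and in addition that it has bounded index; hence the $\widetilde{H}^{000}_{\topo}$-orbit equivalence relation $E$ on the sort of $\bar m$ (an enumeration of $M$) is bounded and $\aut(M^*)$-invariant. Any $f \in \autf_L(M^*)$ therefore preserves the $E$-class of $\bar m$, so $f(\bar m)=\sigma(\bar m)$ for some $\sigma \in \widetilde{H}^{000}_{\topo}$; lifting $\sigma$ and $f$ to $H$, the product $\sigma'^{-1}f'$ fixes $M$ pointwise, hence lies in $\mu \subseteq {G^{**}}^{000}_{\topo}$, and one concludes $f \in \widetilde{H}^{000}_{\topo}$. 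Your approach instead exploits the explicit generator description of $\autf_L(M^*)$ together with the fact, specific to $\omega$-categoricity, that $M$ is prime and hence embeds into every small $N \prec M^*$; this lets you reduce every generator $\aut(M^*/N)$ to a conjugate of $\aut(M^*/M)$, whose elements lift directly into $\mu$. The paper's argument is more uniform (it does not invoke primality of $M$ and works purely from normality plus bounded index), while yours is more concrete and avoids appealing to the ``finest bounded invariant relation'' characterization of $\autf_L$; both end at the same lifting step $\Fix_{G^{**}}(M)=\mu \subseteq {G^{**}}^{000}_{\topo}$.
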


\begin{proof}
As in the first paragraph of the proof of Lemma \ref{lem: closedness and boundedness}, we see that $\widetilde{H}^{000}_{\topo}$ is a normal, bounded index  subgroup of $\aut(M^*)$.

Consider the orbit equivalence relation $E$ of $\widetilde{H}^{000}_{\topo}$ on the sort of $\bar m$ in $M^*$, where $\bar m$ is an enumeration of $M$. We get that $E$ is bounded and invariant.

Take any $f \in \autf_L(M^*)$. Then $f$ fixes $\bar m/E$, so there is $\sigma \in  \widetilde{H}^{000}_{\topo}$ such that $f(\bar m)=\sigma(\bar m)$, i.e.  $(\sigma^{-1}f)(\bar m)=\bar m$. By Lemma \ref{lem: H gives aut(M^*)}, choose $\sigma',f' \in H$ such that $\sigma'|_{M^*}=\sigma$ and $f'|_{M^*}=f$. Then $(\sigma'^{-1}f') (\bar m)=\bar m$, so $\sigma'^{-1}f' \in H \cap {G^{**}}^{000}_{\topo}$, so $\sigma^{-1}f \in \widetilde{H}^{000}_{\topo}$. Since  $\sigma \in  \widetilde{H}^{000}_{\topo}$, we conclude that $f \in \widetilde{H}^{000}_{\topo}$.
\end{proof}

So, $\rho'$ factors through $\autf_L(M^*)$ and yields a group homomorphism
$$\rho \colon \aut(M^*)/\autf_L(M^*) \to G^{**}/{G^{**}}^{000}_{\topo}.$$
This will be the required $\rho$ (note that $G^{**}/{G^{**}}^{000}_{\topo}$ naturally identifies with $G^{*}/{G^{*}}^{000}_{\topo}$).
From the explicit definitions of $\theta$ and $\rho$, we immediately get that the diagram from the introduction commutes:

\begin{figure}[H]
		\centering
		\begin{tikzcd}
			\aut(M^*)/\autf_L(M^*) \arrow[r,"h"]\arrow[d,"\rho"]&\aut(M^*)/\autf_{Sh}(M^*)\arrow[d,"\theta"] \\
			G^{**}/{G^{**}}^{000}_{\topo}\arrow[r] & G^{**}/{G^{**}}^{00}_{\topo},
		\end{tikzcd}
	\end{figure}

So, in order to finish the proof of Theorem \ref{thm: the main result of the paper}, it remains to show

\begin{lem}\label{lem: rho is an isomorphism}
$\rho$ is a group isomorphism.
\end{lem}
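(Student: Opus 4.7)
Since $\rho$ is already established as a well-defined group homomorphism, it remains to prove surjectivity and injectivity.

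\textbf{Surjectivity.}  Given $\tau \in G^{**}$, I would use that $\mathcal M^*$ is saturated as a monster model of the theory of $\mathcal M$: the type $\tp(\tau/\mathcal M)$, a type in the $G$-sort, is realized by some $\sigma_0 \in G^*$.  Since $\sigma_0 \equiv_{\mathcal M} \tau$ in $\mathcal M^{**}$, by the standard description of ${G^{**}}^{000}_{\mathcal M}$ as the group generated by products $a b^{-1}$ with $a \equiv_{\mathcal M} b$, we obtain $\sigma_0 \tau^{-1} \in {G^{**}}^{000}_{\mathcal M} \subseteq {G^{**}}^{000}_{\topo}$.  Because $G^* \subseteq H$, setting $\bar\sigma_0 := \sigma_0|_{M^*} \in \aut(M^*)$ gives $\rho(\bar\sigma_0\, \autf_L(M^*)) = \sigma_0/{G^{**}}^{000}_{\topo} = \tau/{G^{**}}^{000}_{\topo}$.

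\textbf{Injectivity.}  Suppose $\rho(\bar\sigma\, \autf_L(M^*)) = e$, so $\bar\sigma = \sigma|_{M^*}$ for some $\sigma \in H \cap {G^{**}}^{000}_{\topo}$; the goal is $\bar\sigma \in \autf_L(M^*)$.  Applying $\xi$ and using $\xi\circ\rho = \theta\circ h$ with $\theta$ an isomorphism (Corollary \ref{cor: proof of theorem 4.5}) forces $h(\bar\sigma\,\autf_L(M^*)) = e$, whence $\bar\sigma \in \autf_{KP}(M^*)$.  Injectivity of $\rho$ thus reduces to the inclusion
\[
\autf_{KP}(M^*) \cap \widetilde{H}^{000}_{\topo} \subseteq \autf_L(M^*),
\]
where $\widetilde{H}^{000}_{\topo} := \{\eta|_{M^*} : \eta \in H \cap {G^{**}}^{000}_{\topo}\}$.

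To prove this last inclusion, I would reverse the strategy of Lemma \ref{lem: closedness and boundedness}.  Unfold ${G^{**}}^{000}_{\topo} = \langle \mu^{G^{**}}\rangle \cdot {G^{**}}^{000}_{\mathcal M}$ from Remark \ref{rem: description of G^000}.  Each generator $g c g^{-1}$ of $\langle \mu^{G^{**}}\rangle$ with $c\in\mu$ is the pointwise stabilizer in $G^{**}$ of $g(M)$, so (after arranging it to be in $H$) its restriction to $M^*$ fixes the small elementary substructure $g(M)\cap M^*$ and is therefore Lascar strong.  Each generator $b^{-1}a$ of ${G^{**}}^{000}_{\mathcal M}$ arises from an $\mathcal M$-indiscernible pair in $G^{**}$; by an Erd\H os--Rado style extraction inside the two-sorted monster $\mathcal M^{**}$, one arranges the sequence to be indiscernible over $\mathcal M \cup \{\bar x\}$ for a given $\bar x \in M^*$, so that $(a(\bar x), b(\bar x))$ starts an $M$-indiscernible sequence in $M^{**}$ and the corresponding contribution to the $M^*$-action is Lascar strong.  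A final saturation/compactness argument in $\mathcal M^{**}$ assembles these two kinds of contributions into a witness in $\autf_L(M^*)$ with the same $M^*$-restriction as $\sigma$.

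\textbf{Main obstacle.}  The real technical difficulty sits in the injectivity step: the given $\sigma \in H \cap {G^{**}}^{000}_{\topo}$ is a single element preserving $M^*$ globally, but the decomposition produced by Remark \ref{rem: description of G^000} need not factor through setwise stabilizers of $M^*$, so no individual factor need restrict to an automorphism of $M^*$.  Replacing the decomposition, via saturation of $\mathcal M^{**}$, so that each factor separately preserves $M^*$ and restricts to a visibly Lascar strong automorphism, without altering the overall restriction to $M^*$, is the crux of the proof.
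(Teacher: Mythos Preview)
Your surjectivity argument is correct and essentially coincides with what the paper would give if it separated the two directions.

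The injectivity argument, however, has a genuine gap that you yourself identify but do not close. Given $\sigma \in H \cap {G^{**}}^{000}_{\topo}$, the decomposition coming from Remark \ref{rem: description of G^000} produces factors in $G^{**}$ that need not preserve $M^*$ setwise, so they have no restriction to $\aut(M^*)$ at all. Your proposed repair for conjugates $gcg^{-1}$ with $c\in\mu$ is not right: $g(M)$ is a subset of $M^{**}$, and $g(M)\cap M^*$ has no reason to be an elementary submodel of $M^*$ (or even nonempty), so you cannot conclude the restriction is Lascar strong. The appeal to an Erd\H os--Rado extraction for the ${G^{**}}^{000}_{\mathcal M}$ part and the ``final saturation/compactness argument'' are both too vague to carry the proof; the difficulty is real, not merely notational.

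The paper sidesteps this entirely by a different, cleaner route: rather than decomposing in $G^{**}$ and trying to restrict to $M^*$, it uses that every $\sigma\in G^{**}$ canonically induces $\bar\sigma\in\aut(M^{**})$, giving a homomorphism $f\colon G^{**}\to \aut(M^{**})/\autf_L(M^{**})$. One checks directly that $f$ is trivial on $\mu$ and constant on $\equiv_{\mathcal M}$-classes (the first because $\mu=\Fix_{G^{**}}(M)$, the second because an automorphism of ${\mathcal M}^{**}$ over $\mathcal M$ restricts to an element of $\aut(M^{**}/M)\subseteq\autf_L(M^{**})$). Composing with the canonical isomorphism $\gal_L(T)$ computed in $M^{**}$ versus $M^*$ yields a map $S_G(M)\to \aut(M^*)/\autf_L(M^*)$ in the category $\mathcal C$ of Proposition \ref{prop: universal description}. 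A short diagram chase then shows this map is a one-sided inverse to $\rho$ on the level of the universal object, forcing $\rho$ to be an isomorphism. The point is that passing to $M^{**}$ makes \emph{every} element of $G^{**}$ act, so no decomposition-and-restriction gymnastics are needed.
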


\begin{proof}
We are going to use Proposition \ref{prop: universal description}, or rather the comment right below the proof of this proposition which says that the map 
$$F \colon S_G({\mathcal M}) \to G^{**}/{G^{**}}^{000}_{\topo}$$
given by $F(p)=\sigma/{G^{**}}^{000}_{\topo}$ for any $\sigma \models p$ is the initial object in the category ${\mathcal C}$ of maps $S_G({\mathcal M}) \to L$, where $L$ is a group, induced by homomorphisms $G^{**} \to L$ trivial on $\mu$.

Consider $f \colon G^{**} \to \aut(M^{**})/\autf_L(M^{**})$ given by $f(\sigma)=\bar \sigma/\autf_L(M^{**})$, where $\bar \sigma \in \aut(M^{**})$ is the automorphism of $M^{**}$ induced by $\sigma \in G^{**}$.

\begin{clm*}\label{clm: to be an object of C}
i) If $\sigma, \tau \in G^{**}$ have the same type over $M$, then $f(\sigma)=f(\tau)$.\\
ii) $f[\mu]=\{\id/\autf_L(M^{**})\}$.
\end{clm*}

\begin{proof}
i) By assumption, there is $s \in \aut({\mathcal M}^{**}/M)$ such that $s(\sigma)=\tau$. Then  $s|_{M^{**}} \in \aut(M^{**}/M)$, so $s|_{M^{**}} \in \autf_L(M^{**})$, and 
$$(s|_{M^{**}} \circ \bar \sigma) (\bar m)=s(\bar \sigma(\bar m))=s(\sigma \bar m)= s(\sigma) s(\bar m)=\tau \bar m=\bar \tau (\bar m),$$
where $\bar m$ is an enumeration of $M$. Hence, $\bar \sigma/\autf_L(M^{**}) = \bar \tau / \autf_L(M^{**})$.\\[1mm]
ii) follows from Remark \ref{rem: mu as stabilizer}.
\end{proof}

There is also a group isomorphism 
$$g \colon \aut(M^{*})/\autf_L(M^{*}) \to \aut(M^{**})/\autf_L(M^{**})$$ 
given by $g(\sigma/\autf_L(M^{*}))=\sigma'/\autf_L(M^{**})$ for any $\sigma' \in \aut(M^{**})$ extending $\sigma$.

So,  $g^{-1} \circ f \colon G^{**} \to \aut(M^{*})/\autf_L(M^{*})$ is a group homomorphism, which, by Claim \ref{clm: to be an object of C}, satisfies:

\begin{itemize}
\item if $\sigma, \tau \in G^{**}$ have the same type over $M$, then $(g^{-1} \circ f)(\sigma)=(g^{-1} \circ f)(\tau)$,
\item $(g^{-1} \circ f)[\mu]=\{\id/\autf_L(M^{*})\}$.
\end{itemize}

Therefore, $g^{-1} \circ f$ induces a map $\widehat{g^{-1} \circ f} \colon S_G({\mathcal M}) \to \aut(M^{*})/\autf_L(M^{*})$ which is an object of the category ${\mathcal C}$.

\begin{clm*}\label{clm: to use Prop. 2.8}
The following diagram commutes.
\begin{figure}[H]
		\centering
		\begin{tikzcd}
			& \aut(M^*)/\autf_{L}(M^*)\arrow[dd,"\rho"]\\
		     S_G({\mathcal M})\arrow[ur,"\widehat{g^{-1} \circ f}"]\arrow[dr,"F"] & \\
			& G^{**}/{G^{**}}^{000}_{\topo}. 	
		\end{tikzcd}
	\end{figure}
\end{clm*}

\begin{proof}
Consider $p \in S_G({\mathcal M})$ and take any $\sigma \in G^*\leq G^{**}$ such that $\sigma \models p$. Then $F(p)=\sigma /{G^{**}}^{000}_{\topo}$.
Let $\bar \sigma \in \aut(M^{**})$ be induced by $\sigma$. Then $f(\sigma)=\bar \sigma/ \autf_L(M^{**})$, so $(g^{-1}\circ f)(\sigma)=\bar \sigma |_{M^*}/ \autf_L(M^{*})$. Hence, $\widehat{g^{-1}\circ f}(p)= \bar \sigma |_{M^*}/ \autf_L(M^{*})$. We conclude that 
$$\rho(\widehat{g^{-1}\circ f}(p))=\sigma/{G^{**}}^{000}_{\topo} =F(p).$$
\end{proof}

By Lemma \ref{lem: H gives aut(M^*)}, the function $g^{-1}\circ f$ is easily seen to be onto, so $\widehat{g^{-1} \circ f}$ is also surjective. Using this together with the observation that $\widehat{g^{-1} \circ f}$ is an object of ${\mathcal C}$, the first paragraph of this proof, and Claim \ref{clm: to use Prop. 2.8}, we get that $\rho$ is an isomorphism. 
\end{proof}

The proof of Theorem \ref{thm: the main result of the paper} has been completed. As was mentioned in the introduction, the original definition of G-compactness in \cite{La} was stronger in the sense that naming any finite set of parameters was allowed. Now, we give an explanation that Theorem \ref{thm: the main result of the paper} is true even with this stronger definition.

So, take any finite subset $A$ of $M$ (by $\omega$-categoricity, it is enough to consider parameters from $M$). Then $\aut(M/A)$ is the group of automorphisms of $M$ with constants for members of $A$ added to the language. Since the resulting theory is still $\omega$-categorical and we already have proved Theorem \ref{thm: the main result of the paper} (with the weaker definition of G-compactness), it remains to show that $\aut(M/A)$ is amenable. Since this is an open subgroup of $\aut(M)$, we finish using the following well-known fact (see Theorem 3.3 of \cite{Ri}).

\begin{fct}\label{fac: folklore on amenability}
Let $G$ be a topological group and $H$ an open subgroup. If $G$ is amenable, then so is $H$.
\end{fct}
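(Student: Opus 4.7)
The plan is to show directly that every (topological) $H$-flow admits an $H$-invariant Borel probability measure, by coinducing it up to a $G$-flow, applying amenability of $G$, and pushing back down via an $H$-equivariant projection. Openness of $H$ will be used crucially to guarantee continuity of the coinduced $G$-action.

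Let $X$ be an arbitrary $H$-flow. First I fix a left transversal $T \subseteq G$ for $H$ with $e \in T$, so that $G = \bigsqcup_{t \in T} tH$. Since $H$ is open, each coset $tH$ is open in $G$, and the coset projection $\tau \colon G \to T$ is continuous to $T$ equipped with the discrete topology. For $g \in G$ and $t \in T$, set $\sigma_g(t) := \tau(gt)$ and $\gamma_g(t) := \sigma_g(t)^{-1} g t \in H$, so that $g t = \sigma_g(t)\gamma_g(t)$; these obey the obvious cocycle relations
\[
\sigma_{g_1 g_2} = \sigma_{g_1} \circ \sigma_{g_2}, \qquad \gamma_{g_1 g_2}(t) = \gamma_{g_1}\bigl(\sigma_{g_2}(t)\bigr)\,\gamma_{g_2}(t).
\]

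Next I form the compact Hausdorff space $\tilde X := X^T$ with the product topology, and define a $G$-action by
\[
(g \cdot x)_{\sigma_g(t)} := \gamma_g(t) \cdot x_t \quad \text{for all } t \in T;
\]
the cocycle identities ensure this is a left action. For the continuity of the action at a given coordinate $s \in T$, note that $(g \cdot x)_s = \gamma_g(t_g) \cdot x_{t_g}$ where $t_g := \tau(g^{-1}s)$; since $\tau$ is locally constant, $t_g$ is locally constant in $g$, and on any neighborhood $U$ of $g_0$ on which $t_g \equiv t_0$ the formula reduces to $(g,x) \mapsto (s^{-1}g t_0) \cdot x_{t_0}$, which is jointly continuous by continuity of the $H$-action on $X$. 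Thus $\tilde X$ is a $G$-flow. A direct check gives $\sigma_h(e) = e$ and $\gamma_h(e) = h$ for every $h \in H$, so the coordinate projection $\pi_e \colon \tilde X \to X$, $x \mapsto x_e$, is $H$-equivariant.

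Finally, amenability of $G$ provides a $G$-invariant Borel probability measure $\mu$ on $\tilde X$; since $H \leq G$, $\mu$ is in particular $H$-invariant, and then $(\pi_e)_*\mu$ is an $H$-invariant Borel probability measure on $X$, proving that $H$ is amenable. The main delicate point is the verification of continuity of the $G$-action on $\tilde X$; this is exactly where openness of $H$ is used, through the discreteness of $G/H$ which makes $g \mapsto \sigma_g(t)$ locally constant in $g$.
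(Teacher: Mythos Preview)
Your proof is correct. The co-induction construction is carried out cleanly: the cocycle identities are right, the verification that $\sigma_g$ is a bijection on $T$ (via $\sigma_g \circ \sigma_{g^{-1}} = \sigma_e = \id$) makes the action well defined on all coordinates, and your continuity argument is exactly where openness of $H$ enters, through the local constancy of $g \mapsto \tau(g^{-1}s)$. The $H$-equivariance of $\pi_e$ and the pushforward step are straightforward.

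The paper takes a different route. Rather than working with flows directly, it invokes the equivalent characterization of amenability as the existence of a left-invariant mean on the space of bounded left uniformly continuous real-valued functions on $G$, and then observes that Greenleaf's proof of the discrete case (transferring a mean from $G$ to $H$ via an averaging operator $Tf$) goes through; openness of $H$ is used there to ensure that the transferred function $Tf$ remains left uniformly continuous. Your argument has the advantage of staying within the flow-and-measure framework that the paper actually uses as its working definition of amenability, so it avoids appealing to the equivalence with the LUC-mean formulation. The paper's approach, on the other hand, is shorter to state once one accepts that equivalence and has Greenleaf's argument at hand. Both approaches exploit openness of $H$ in structurally analogous ways: to make a transfer construction (of actions in your case, of functions in the paper's) respect the relevant continuity.
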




We finish with the proof of Corollary \ref{theorem: new theorem}.

\begin{proof}[Proof of Corollary \ref{theorem: new theorem}]
If $\aut(M)$ is extremely amenable, Proposition \ref{prop: extremely amenable} together with Lemma \ref{lem: rho is an isomorphism} imply that $\gal_L(T)$ is trivial. Thus, by $\omega$-categoricity, it remains to check that extreme amenability of $\aut(M)$ is preserved under naming finitely many parameters from $M$. So, as in above discussion, it is enough to show the following counterpart of Fact \ref{fac: folklore on amenability}.

\begin{fct}\label{fac: folklore on extreme amenability}
An open subgroup $H$ of an extremely amenable topological group $G$ is extremely amenable.
\end{fct}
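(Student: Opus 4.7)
The plan is to show that any compact $H$-flow $X$ admits an $H$-fixed point, by inducing from $X$ a compact $G$-flow $Y$ whose $G$-fixed points correspond to $H$-fixed points of $X$.

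Concretely, I would let $Y$ be the set of all $H$-equivariant maps $f \colon G \to X$, i.e.\ those with $f(hg) = hf(g)$ for all $h \in H$, $g \in G$. Fixing a set $\{g_\alpha\}_\alpha$ of representatives of $H \backslash G$, such an $f$ is determined by the tuple $(f(g_\alpha))_\alpha \in X^{H \backslash G}$, and I would give $Y$ the resulting product topology, which is compact by Tychonoff. A useful preliminary remark is that every such $f$ is automatically continuous: the right cosets $Hg_\alpha$ are clopen (because $H$ is open), and on each of them $f$ is the composite of right-translation by $g_\alpha^{-1}$ with the orbit map $h \mapsto hf(g_\alpha)$, both of which are continuous. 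One then defines a left $G$-action on $Y$ by $(g' \cdot f)(g) := f(gg')$; a direct check shows that this preserves $H$-equivariance and is indeed a left action.

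The main technical step is to verify continuity of the action $G \times Y \to Y$. Given $(g_0, f_0)$ and a basic open neighborhood of $g_0 \cdot f_0$ of the form $\{f : f(a_i) \in U_i,\ i \leq n\}$, I would write $a_i g_0 = h_i g_{\alpha_i}$ with $h_i \in H$. Because right cosets are clopen, for $g$ in a sufficiently small neighborhood of $g_0$ each $a_i g$ still lies in $Hg_{\alpha_i}$, and the element $h_i' := a_i g g_{\alpha_i}^{-1} \in H$ depends continuously on $g$ and is close to $h_i$. Then $(g \cdot f)(a_i) = h_i' f(g_{\alpha_i})$, and joint continuity of the $H$-action on $X$ keeps this inside $U_i$ once $g$ is close to $g_0$ and $f(g_{\alpha_i})$ is close to $f_0(g_{\alpha_i})$.

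Once $Y$ is a compact $G$-flow, extreme amenability of $G$ produces a $G$-fixed $f_0 \in Y$. Then $f_0(gg') = f_0(g)$ for all $g,g' \in G$ forces $f_0$ to be constant, say $f_0 \equiv x_0$, and $H$-equivariance $f_0(hg) = hf_0(g)$ becomes $x_0 = hx_0$ for all $h \in H$, so $x_0$ is the desired $H$-fixed point. The main (and essentially only) obstacle is the continuity verification for the $G$-action on $Y$, and this is precisely where the hypothesis that $H$ is open enters: without clopenness of the right cosets, the coset index $\alpha_i$ would not vary locally constantly with $g$, and the parametrisation by $X^{H \backslash G}$ would break down.
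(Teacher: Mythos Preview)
Your proof is correct but follows a genuinely different route from the paper's. The paper invokes Pestov's characterization of extreme amenability (\cite[Theorem 8.1]{Pe}): $G$ is extremely amenable if and only if $SS^{-1}$ is dense in $G$ for every left generic $S\subseteq G$. Given a left generic $S\subseteq H$, they form $S':=SR$ for a set $R$ of right-coset representatives of $H$ in $G$, observe that $S'$ is left generic in $G$, apply Pestov's criterion to get $S'S'^{-1}$ dense in $G$, and then use that $S'S'^{-1}\cap H=SS^{-1}$ together with openness of $H$ to conclude $SS^{-1}$ is dense in $H$.

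Your argument instead builds the co-induced $G$-flow $Y=\{f\colon G\to X: f(hg)=hf(g)\}\cong X^{H\backslash G}$ directly, and shows that a $G$-fixed point of $Y$ yields an $H$-fixed point of $X$. The trade-off is clear: the paper's proof is a three-line combinatorial reduction once Pestov's theorem is granted, but relies on that external (and non-trivial) characterization; your proof is self-contained and conceptually transparent (it is the standard co-induction functor from $H$-flows to $G$-flows), but requires the somewhat fiddly verification of joint continuity of the $G$-action on $Y$, which you handle correctly. In both arguments openness of $H$ is used in the same essential way---to make right cosets clopen---so neither approach generalizes beyond open subgroups.
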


\begin{proof}
We will use Pestov's characterization of extreme amenability in terms of generic sets \cite[Theorem 8.1]{Pe}: $G$ is extremely amenable if and only if for any left generic subset $S$ of $G$, $SS^{-1}$ is dense in $G$.

Take any left generic subset $S$ of $H$. We want to show that $SS^{-1}$ is dense in $H$. Choose any set $R$ of representatives of right cosets of $H$ in $G$. Consider $S':=SR$. We see that $S'$ is generic in $G$ (witnessed by left translates by the elements witnessing genericity of $S$ in $H$). So, by the extreme amenability of $G$ and Pestov's characterization, we get that $S'S'^{-1}$ is dense in $G$. But $S'S'^{-1} \cap H=SS^{-1}$ and $H$ is open. Thus, we conclude that $SS^{-1}$ is dense in $H$, and we are done. 
\end{proof}

The proof of Corollary \ref{theorem: new theorem} has been completed.
\end{proof}

\begin{rem}
We give a direct account of Corollary \ref{theorem: new theorem}, which also shows that the Lascar equivalence relation is realized in one step.  First, deduce from extreme amenability of $\aut(M)$ that every complete type over $\emptyset$ has an extension to a complete type over $M$ which is $\aut(M)$-invariant. Now, suppose $b$ and $c$ are finite tuples  (without loss from $M$) with the same type over $\emptyset$. Then for every complete type $q$ over $\emptyset$ there is a realization $m$ of $q$ with $\tp(b/m) = \tp(c/m)$ (choose $m$ to realize the invariant extension of $q$ over $M$). By compactness, we can find a model $M_{0}$ such that $\tp(b/M_{0}) = \tp(c/M_{0})$, whence $b$ and $c$ have the same Lascar strong type over $\emptyset$. By compactness, the observation  that $a\equiv b$ implies $a\equiv_{M_0} b$ for some model $M_0$ (and so $a$ and $b$ have the same Lascar strong type over $\emptyset$) also holds for all infinite tuples $a,b$. Therefore, $\gal_L(T)$ is trivial. Using Fact 4.20, we get that the same is true over any finite set of parameters, so $T$ is G-trivial. 
\end{rem}

\section*{Acknowledgments}
We would like to thank Udi Hrushovski for helpful remarks and comments on an earlier draft of the paper, and to the anonymous referee for careful reading, correcting typos, and suggesting some references.


\end{document}